\documentclass[12pt]{amsart}
\usepackage{amsmath,amssymb,amsfonts,latexsym}
\usepackage{mathrsfs}%pour les caracteres en calligraphie
\usepackage[dvips]{geometry}
\usepackage{array}
\usepackage{epsf}
\usepackage{epsfig}
\newtheorem*{lemma}{Lemma}
\newtheorem*{prop}{Proposition}
\newtheorem*{thm}{Theorem}
\newtheorem*{cor}{Corollary}

\newcommand{\twoheaddownarrow}{\overset{\sim}{\twoheaddownarrow}}

\newcommand{\gr}{\operatorname{gr}}
\newcommand{\nc}{\newcommand}
\nc{\Ker}{\operatorname{Ker}} \nc{\rker}{\operatorname{rKer}}
\nc{\im}{\operatorname{Im}}
\nc{\stab}{\operatorname {Stab}}
\nc{\ann}{\operatorname {Ann}}
\nc{\Id}{\operatorname {Id}}
\nc{\Prim}{\operatorname {Prim}}
\nc{\Real}{\operatorname {Re}}
%\nc{\twoheaddownarrow}{\operatorname {twoheaddownarrow}}
\nc{\Ext}{\operatorname {Ext}}
\nc{\rad}{\operatorname {rad}}
\nc{\rk}{\operatorname {rank}}
\nc{\Aut}{\operatorname {Aut}}
%\nc{\ht}{\operatorname {ht}}
\nc{\supp}{\operatorname {supp}}

\usepackage{array}
\usepackage{epsf}
\usepackage{epsfig}
\usepackage{xcolor}

\usepackage{tikz,tikz-cd}

\usetikzlibrary{matrix}

\usetikzlibrary{fit}

\usetikzlibrary{matrix,backgrounds}
\pgfdeclarelayer{myback}
\pgfsetlayers{myback,background,main}

\tikzset{mycolor/.style = {line width=1bp,color=#1}}%
\tikzset{myfillcolor/.style = {draw,fill=#1}}%
\usetikzlibrary{arrows,matrix,positioning}
\usepackage{pstricks}

\usetikzlibrary{graphs,graphs.standard}
\usepackage{changepage}
\usepackage{tikz-cd}
\usepackage{mathrsfs}

\makeatletter
\newcommand*{\encircled}[1]{\relax\ifmmode\mathpalette\@encircled@math{#1}\else\@encircled{#1}\fi}
\newcommand*{\@encircled@math}[2]{\@encircled{$\m@th#1#2$}}
\newcommand*{\@encircled}[1]{%
  \tikz[baseline,anchor=base]{\node[draw,circle,outer sep=0pt,inner sep=.2ex] {#1};}}
\usepackage{float}
\begin{document}

\title [Weierstrass Sections and Components]{Parabolic adjoint action, Weierstrass Sections and components of the nilfibre in type $A$}
\author [Yasmine Fittouhi and Anthony Joseph]{Yasmine Fittouhi and Anthony Joseph\\
Donald Frey, Professorial Chair\\
Department of Mathematics\\
The Weizmann Institute of Science\\
Rehovot, 7610001, Israel}
\date{\today}
\maketitle

Key Words: Invariants, Parabolic adjoint action.

AMS Classification: 17B35

 \

\textbf{Abstract}.

\

This work is a continuation of [Y. Fittouhi and A. Joseph, Weierstrass Sections for Parabolic adjoint action in type $A$].

Let $G$ be an irreducible simple algebraic group and $B$ a Borel subgroup of $G$. Let $\mathfrak n$ be the Lie algebra of the nilradical of $B$.  Consider an irreducible subgroup $P$ of $G$ containing $B$. Let $P'$ be the derived group of $P$. Let $\mathfrak m$ be the Lie algebra of the nilradical of $P$.

   A theorem of Richardson asserts that the algebra $\mathbb C[\mathfrak m]^{P'}$  of $P$ semi-invariants is multiplicity-free.  It is hence a polynomial algebra on generators which can be taken to be those weight vectors which are irreducible as polynomials.

A linear subvariety $e+V$ such that the restriction map induces an isomorphism of $\mathbb C[\mathfrak m]^{P'}$  onto $\mathbb C[e+V]$  is called a Weierstrass section for the action of $P'$ on $\mathfrak m$.

Here in type $A$ such a section is constructed, but in better form than that given in Sect. 4, loc cit.
  Yet the main difference is a complete change of emphasis from the construction of a Weierstrass section, to its application.

Let $\mathscr N$ be the nilfibre relative to this action. From the construction of a Weierstrass section $e+V$, it is shown that $e \in \mathscr N$.  Then $P.e$ is contained in a unique irreducible component $\mathscr C$ of $\mathscr N$.

  The structure of $e+V$ is used to give a rather explicit description of $\mathscr C$ as a $B$ saturation set, that is of the form $\overline{B.\mathfrak u}$, where $\mathfrak u$ is a subalgebra of $\mathfrak n$ . This algebra is not necessarily complemented by a subalgebra in $\mathfrak n$ and so $\overline{B.\mathfrak u}$ is not necessarily an orbital variety closure (hence Lagrangian) but it can be.

It is shown that $\mathscr C$ need not contain a dense $P$ orbit and this by a purely theoretical analysis. This occurs for an appropriate parabolic in $A_{10}$ and is possibly the simplest example.  In this particular case $\mathscr C$ is not an orbital variety closure.

%\
%
%This work was the subject of a joint talk on Zoom at Bangaluru, India, 10-12 December 2020. We would like to thank Venkatesh Rajendran for the invitation to speak.  Our talk may be viewed on
%
%\
%
%https://www.youtube.com/watch?v=ALTQj0w2ADM
%
%\
%
%The first author was partly supported at the University of Haifa, arranged by Vladimir Hinich and Anna Melnikov and later at the Weizmann Institute, arranged by Ronen Basri and Gal Binyamini.

\section{Introduction}\label{1}

%This work is a continuation of \cite {FJ}.  To avoid too much repetition we refer to the latter for the explanation of what is basically standard terminology.

The ground field is assumed to be the field of complex numbers $\mathbb C$. For every positive integer $n$, we set $[1,n]=\{1,2,,\ldots\,n\}$.

\subsection{Basic Notation}\label {1.1}

Let $G$ be a simple connected algebraic group, choose a Cartan subgroup $H$ and a Borel subgroup $B\supset H$.  Here and elsewhere we use a lower case Gothic letter to denote the Lie algebra of a given algebraic group, for example $\mathfrak g, \mathfrak h, \mathfrak b$.

Let  $\Delta$ (resp. $\Delta^+ \subset \Delta$) be the set of non-zero (resp. positive)  roots  relative to the pair $(G,H)$ (resp. and to $B$) and $\pi \subset \Delta^+$ the corresponding set of simple roots.  Given $\alpha \in \pi$, $s_\alpha$ denote the corresponding simple reflection and $W$ the group generated by the set of simple reflections.

For each $\pi' \subset \pi$, let $P_{\pi'} \supset B$ denote the corresponding parabolic subgroup of $G$.  Let $L_{\pi'}$ (resp. $M_{\pi'}$)  denote the Levi factor  (resp. nilradical) of $P_{\pi'}$.

%Let $W_{\pi'}$ denote the subgroup of $W$ generated by the simple reflections $s_\alpha: \alpha \in \pi'$ and $w_{\pi'}$ its unique longest element.  It acts on the Levi factor $L_{\pi'}$ of $P_{\pi'}$.

In the above, subscripts may be sometimes dropped.

Let $P'$ be the derived group of $P$. A consequence \cite [2.2.2]{FJ} of a theorem of Richardson  is that the invariant algebra $\mathbb C[\mathfrak m]^{P'}$ is polynomial on generators which can be specified as being simultaneously irreducible polynomials and weight vectors, with respect to $\mathfrak h$.

The zero set of such a polynomial is a closed irreducible $P$ stable subvariety of $\mathfrak m$ and hence \cite [2.3.4]{FJ} is the closure of an orbital variety, which we call a hypersurface orbital variety.

\subsection{Diagrams}\label {1.2}

In some sense the observation in \ref {1.1} relating invariants to hypersurface orbital varieties describes the invariant generators.  However orbital varieties are notoriously difficult to determine. Thus for example in type $E_8$ we barely know their number and not at all how many are hypersurfaces in the nilradical of a given parabolic.

Assume for the moment that $\mathfrak g$ is simple of type $A$.

In this case the Levi factor $L$ of a given parabolic $P$ (in standard form) is given by a set of blocks down the diagonal.

We may represent these blocks by a set of successive columns $C_i:i=1,2,\ldots,k$ forming a diagram $\mathscr D_\mathfrak m$ (or simply, $\mathscr D$).

Let $c_i$ denote the height of $C_i$.  We call the sequence $(c_1,c_2,\ldots,c_k)$, the composition that determines $P$. It will also mean the set of columns heights in $\mathscr D$.

%Taking the corresponding partition, in this case $(2,2,1,1)$ does not change the number of invariant generators by a result of Melnikov (cf \because obviously the number of neighbouring columns does not change.  This also results from \cite [Prop. 3.4.8]{FJ}, which is valid for all types. However the generators themselves as well as the Weierstrass sections change \textit{dramatically}.

\

\textbf {Neighbouring Columns}.  Two columns in $\mathscr D$ of height $s$ with no columns of height $s$ in between are called neighbouring columns (of height $s$).

\

\textbf{Fact}. The number of invariant generators with respect to a given parabolic is just the total number of pairs of neighbouring columns.

\

This result is due to Melnikov \cite {M1}, \cite [Thm. 2.9]{JM}. Its proof uses the Robinson-Schensted algorithm. It is given a new and quite different proof in \cite [Cor. 5.2.6] {FJ}.  It means for example that the number of invariant generators for the composition $(2,1,1,2)$ and for the corresponding partition $(2,2,1,1)$ are the same.  This also results from \cite [Prop. 3.4.8]{FJ}, which is stated in a form valid for all types. However the generators themselves as well as the Weierstrass sections change \textit{dramatically}.

In type $A$, the invariant with respect to a given pair of neighbouring columns is the appropriate Benlolo-Sanderson invariant \cite {BS}. In \cite [Sect. 3]{FJ} we described this invariant more conveniently and more compactly as the restriction of an appropriate minor obtained by taking some derivative of a particular $\mathfrak {sl}(m)$ invariant, for some choice of $m$.

%Moreover this presentation can be generalized for all parabolics in all simple Lie %algebras, though we do no yet know if all the invarinat generators are so obtained.

In addition to describing the invariants and their number in type $A$ one finds in \cite {JM}, a simple highest weight module whose zero variety is the given hypersurface orbital variety.  This construction uses using notably the well-known Jantzen sum formula.

The above results were generalized in what can only be described as a tour de force, by Elena Perelman \cite {Pe} to all classical types. In this, McGovern's work \cite {Mc} on describing orbital varieties was used, as well as a presentation of the invariants by the natural embedding in type $A$.

Our presentation of invariants in type $A$ briefly noted above has the advantage that it generalizes to all types \cite [Sect. 3]{FJ}. Yet outside type $A$,  we do not know if this gives all the invariants. At least for $\mathfrak g$ classical this seems extremely likely and can in principle be checked from the work of Perelman \cite {Pe}.

 Ultimately we would like to use this as a basis for a unified treatment for all types.
 \subsection{Weierstrass Sections}\label {1.3}

  A Weierstrass section $e+V$ for the action of $P'$ on $\mathfrak m$ is an affine translate of a vector subspace $V \subset \mathfrak m$ such that the algebra $\mathbb C[\mathfrak m]^{P'}$ of $P'$ invariants on $\mathfrak m$ maps isomorphically on restriction  to the algebra of regular functions on $e+V$.

   A Weierstrass section provides a linearization of the invariant generators.  It also provides a ``canonical form'', or representative, of each orbit passing through $e+V$.  In the special case of $\mathfrak {sl}(3)$ acting on its ten dimensional simple module, one recovers Weierstrass canonical form for elliptic curves, as noted by Popov and Vinberg \cite {PV}, though there was a subtle point that they missed \cite [Sect. 2]{J3} involving the needed irreducibility of the nilfibre (see also \ref {1.5}).  Generally for such problems the nilfibre is hardly ever irreducible.  It is not know if it is necessarily equidimensional.

 \subsection{The Kostant Section}\label {1.4}

 In a seminal paper \cite {K}, partly inspired by the work of Coleman \cite {C}, Kostant provided a construction of a Weierstrass section for the coadjoint action of $\mathfrak g$.   Here $e$ is taken to generate a co-adjoint nilpotent orbit of maximal dimension and through the Jacobson-Morosov theorem one obtains $h \in \mathfrak g$, for which $e$ is an eigenvector of eigenvalue $-1$. Then $V$ is taken to be an $h$ stable complement to $\mathfrak g.e$.  This ensures the injectivity of the restriction map; but surjectivity required Kostant to establish a matching of eigenvalues of $h$ on $V$, with the degrees of generators.

  \subsection{Adapted Pairs}\label {1.5}

 For the action of a connected algebraic group $Q$ (not necessarily semisimple) by morphisms on a vector space $M$,  the nilfibre $\mathscr N$ is defined to be the zero set of the augmentation of the invariant ring $\mathbb C[M]^Q$.  It is the set of so-called nilpotent elements if $Q$ is semisimple and $M=\mathfrak q^*$.

 If $x \in M$ generates an $Q$ orbit of maximal dimension $d$, we say that $x$ is regular. However this concept of regularity is not so useful here.

 Suppose $(e,h)$ with $e \in V$ and $h \in \mathfrak q$ semisimple, with $h.e = -e$.  Then it is easily shown (\cite [1.3]{FJ} that $e \in \mathscr N$.

 It is immediate from its definition that $\mathscr N$ is $Q$ stable.

 We say that $e\in \mathscr N$ is regular in $\mathscr N$  if $Q.e$ is dense in an irreducible component $\mathscr C$ of $\mathscr N$.

 In this case we shall say that $(e,h)$ is an adapted pair.  Adapted pairs can be notoriously difficult to find.

 If $\mathscr N$ is equidimensional of dimension $d$, then $Q.e \subset \mathscr N$ has dimension $d$ if and only if $e \in \mathscr N$ is regular.

 However a parabolic $P$ acts densely on the Lie algebra $\mathfrak m$ of its niradical, whilst a component of $\mathscr N$ will in general have dimension strictly less than $\dim \mathfrak m$.

 By contrast, for coadjoint action it is often the case that the maximal orbit dimension in $\mathfrak q^*$ and in $\mathscr N \subset \mathfrak q^*$ are the same.

 \subsection{Adjoint Action}\label {1.6}

For the adjoint action of a parabolic $\mathfrak p$ on its nilradical $\mathfrak m$, the existence of an adapted pair only occurs for some special choices.

Nevertheless in \cite {FJ} we were able to construct a Weierstrass section for all pairs $(\mathfrak p, \mathfrak m)$ in type $A$.  The method is different to both the Kostant procedure and that used in \cite {J1}.

We consider it to be quite remarkable that our method works \textit{uniformly} in all cases for type $A$.

The general idea of our construction is the following.

For a given invariant, which we recall is the restriction of a minor, we choose $e+V$ so that all of its monomials except one of them vanish on restriction to $e+V$ and the remaining monomial restricts to an element of $V$.  This is easy enough for just one invariant, but our construction must achieve this result for all invariants \textit {simultaneously}.

To obtain the required result we number the boxes in $\mathscr D_\mathfrak m$ with entries in $[1,n]$ so that entries increase down the columns and then on going from left to right.  This forms a tableau $\mathscr T_\mathfrak m$ (which in general is non-standard).  This construction has the property that a line from left to right joining boxes with entries $i,j$ defines a co-ordinate vector $x_{i,j}\in \mathfrak m$.  This line will be denoted by $\ell_{i,j}$, or simply, $(i,j)$.

These lines are concatenated to form a ``composite line'', going monotonically from left to right

%Then a union of composite lines defines a product of co-ordinate vectors.

Between any two neighbouring columns of height $s$, we require ($(P_1)$ of \ref {5.2}) that all the boxes in the rectangle between these two columns and on the first $s$ rows lie on a unique disjoint union of composite lines, that is to say passing through different boxes.

This union of composite lines defines the required monomial of the invariant, specified by the pair of neighbouring columns, as the product of all the co-ordinates defined by each of the individual lines.  Then all but one of the co-ordinate vectors are evaluated at $1$ (Condition $(P_2)$ of \ref {5.2}).  The evaluation of the invariant is a linear function, deemed to be a generator of $V$ (\cite [Lemma 4.2.5]{FJ}).

To obtain this for all invariants \textit{simultaneously} we use a system of ``gating'' which redirects certain paths - see \cite [4.2.4, Example] {FJ}, in a similar fashion to ``railway points'' rerouting trains.  This can have profound consequences\footnote{Kaiser Wilhelm II declared that preparations against France must be stopped; but his chief of staff answered that this was impossible. It would involve the rerouting of 11,000 trains \cite [Germany declares war, p. 100]{T}.}. In our case we redirect the straight lines used by Ringel et al \cite {BHRR} which had been used to give an explicit dense $P$ orbit in $\mathfrak m$, to now give the much less obvious Weierstrass section.

 In \cite [Section 4] {FJ} we used a system of ``minimal'' gating.  Here we change the construction slightly to ``maximal'' gating.  This is more natural but its main intended purpose was to ensure that $e$ is regular, that is to say generates a dense $P$ orbit in the component in which it is contained.  The way that gating works needs a little thought to understand so patience is necessary!  It resulted from an imaginative insight provided by the first author.

This construction is carried out in detail in Section \ref {5}, without relying on the description in \cite [Sect. 4]{FJ}.  Furthermore we analyze in some detail what this construction gives.  In particular that it behaves well (\ref {5.4.9}) for the adjoining of columns on the right (which is false for the minimal gating). It works less well (\ref {5.4.10}) for adjoining of columns on the left.

In both constructions $e$ (resp. $V$) is always a sum of root vectors (resp. root subspaces).

 \subsection{Change of Emphasis}\label {1.7}

 There is a radical change of emphasis in the present paper compared to \cite {FJ}.  Here we are not so much concerned with Weierstrass sections per se, but rather to where they lead.

 Here we recall that every orbital variety closure can be given the form $\overline {B.(\mathfrak n\cap w(\mathfrak n))}$, where $\mathfrak u :=\mathfrak n\cap w(\mathfrak n)$ is a subalgebra of $\mathfrak n$  complemented in $\mathfrak n$ by a subalgebra.  This implies \cite [Lemma 7.5]{J0} that the intersection of $\overline {B.\mathfrak u}$ with the unique dense $G$ orbit $\mathscr O$ in $G.\mathfrak u$, is involutive and hence by the dimension equality noted in \cite [2.3.1, Eq.(1)]{FJ}, a Lagrangian subvariety of $\mathscr O$.

  One of the remarkable consequences of our construction in \ref {5.4} is that we are able to show (Cor. \ref {6.9.8}) that $e$ above belongs to a ``$B$ saturation set" and that this set is  the component $\mathscr C$ of $\mathscr N$ containing  $e$.

   More precisely, let $\mathfrak n$ be the nilradical of $\mathfrak b$.  Then we describe rather explicitly a subalgebra $\mathfrak u$ of $\mathfrak n$ such that $\overline {B.\mathfrak u}$ is a component $\mathscr C$ of $\mathscr N$ containing $e$.  Generally the subalgebra $\mathfrak u$ we construct is not complemented in $\mathfrak n$ by a subalgebra. Yet all we need for $\overline {B.\mathfrak u}$ to be an orbital variety closure is the dimension estimate
   $\dim \overline {B.\mathfrak u} \geq \frac{1}{2}\dim G.e$ which often holds but can also fail (see the examples in \ref {6.10.9}). Of course $B.\mathfrak u$ does not determine $\mathfrak u$, nor is the choice of $w$ in the previous presentation unique.

   An important aspect of the above analysis is the use of properties of $e$ established in our construction of a Weierstrass section (\ref {5.4}).  Here the key is Lemma \ref {6.9.6} and its use in Theorem \ref {6.9.7}.

 \subsection{Regularity}\label {1.8}

The regularity of $e$ in $\mathscr N$ generally fails but we shall postpone a full discussion of this delicate point for another paper and limit ourselves to the following comments.

A difficulty in computing $\mathfrak p.e$, is that we are \textit {not} allowed to use the same element of $\mathfrak p$ more than once.  This issue already arose in the work of Victoria Sevostyanova \cite {Se}.

A pair $x_{i,j},x_{k,\ell}$ of co-ordinate vectors occurring in $e$ will be called a VS pair if $x_{j,k}$, lies in $\mathfrak n$ or is a root vector of the Levi factor. Notice that the first condition implies that $i<j,k<\ell$.

Given a VS pair, commutation of $x_{i,j}$ with $x_{j,k}$ gives $x_{i,k}$ whilst commutation with of $x_{k,\ell}$ with $x_{j,k}$ gives $x_{j,\ell}$.

In a well-defined sense it can happen that either $x_{i,k}$ or $x_{j,\ell}$ already occurs in $\mathfrak p.e$ or lies in $V$ and we do not need to use $x_{j,k}\in \mathfrak p$ twice.  When this is \textit{not} the case we say that the VS pair is bad.  For example for the parabolic defined by the composition $(3,2,1,1,2,1)$, one finds that $x_{4,6},x_{9,10}$ is a bad VS pair.

On closer inspection, the notion of a bad VS pair is seen to be  a delicate issue - see \ref {6.10.3}, Example.  The classification of all such pairs is even more difficult. These issues will be taken up in a subsequent paper.  For the moment we just give without proof an elegant and difficult result in this direction (Theorem \ref {6.10.3}). It will serve as a guide.

Define $e_{VS}$ by adjoining the right hand co-ordinate vector $x_{j,\ell}$ to $e$ for every bad VS pair. Miraculously this choice ensures that $e_{VS} \in \overline {B.\mathfrak u}$, whilst this can fail for the left hand co-ordinate vector $x_{i,k}$.  This will be proved in a subsequent paper.  Here we just check the special cases we need.

Let $E_{VS}$ denote the sum of the root subspaces defined by the root vectors occurring in $e_{VS}$.  It can happen that these roots are not all linearly independent (\ref {6.10.7}).

We say that a sum $e$ of root vectors in a component $\mathscr C$ of the nilfibre is weakly regular if the corresponding sum of root subspaces $E$ lies in $\mathscr C$ and that $P.E$ is dense in $\mathscr C$.

%A main result of the present paper (Cor. \ref {6.9.11}) is the description a component $\mathscr C$ of $\mathscr N$ as the closure of $B$ saturation set, analogous to the description of an orbital variety, \textit{except} that this set need not be Lagrangian.

 % A key step in the proof is to utilize properties of $e$ established in our construction of a Weierstrass section (\ref {5.4}).

  In a subsequent paper we shall show (see Remark \ref {6.10.4}) that $P.E_{VS}$ is dense in the component $\mathscr C$ of $\mathscr N$ containing $e_{VS}$. In other words $e_{VS}$ is weakly regular in $\mathscr C$.

 Yet the said linear dependence means that $H.e_{VS}$ is not dense in $E_{VS}$, so $P.e_{VS}$ may fail to be dense in $\mathscr C$ .

By this means we are able to show quite remarkably that (Lemma \ref {6.10.7}) that there are components of the nilfibre with \textit {no} dense $P$ orbit.  This is disappointing but not too surprising since Melnikov \cite {M2} had earlier shown that there are orbital varieties with no dense $P$ orbit (for the obvious choice of $P$) thus negating a conjecture of P. S. Smith.  Her example was in $\mathfrak {sl}(9)$ and required a computer. Our present  example of a component of the nilfibre not admitting a dense $P$ orbit, is in $\mathfrak {sl}(11)$. The computation is relatively easy and does not need  a computer. Yet this component is not an orbital variety closure (see Example 3 of \ref {6.10.9}).  We found a further example (but not given here) in $\mathfrak {sl}(19)$, also not an orbital variety closure.

  \subsection{Components}\label {1.9}

  In \ref {6.10.8} we conjecture (read suggest) how $\mathscr N$ decomposes into irreducible components.  In a subsequent paper we hope to show these components are again $B$ saturation sets, which generalize those obtained from our construction noted above.

  In the case that $e_{VS}$ in $\mathscr N$ is regular, then it can be automatically embedded in an adapted pair $(e,h)$, though unfortunately with $h \in \mathfrak h$ rather than in $\mathfrak h':=\mathfrak h \cap \mathfrak p'$.
  For the moment it is still quite unclear how we could reason backwards to recover a Weierstrass section and this is even less clear if $e_{VS}$ is only weakly regular.

  Of course all the above discussion is oriented towards the case when $\mathfrak p \subset \mathfrak g$, with $\mathfrak g$ simple of general type.

  \subsection{Factorisation}\label{1.10}

  In the notation of \ref {1.2} suppose that $C_1,C_k$ are neighbouring columns of height $s$.  Then the weight of the Benlolo-Sanderson invariant is the Kostant weight $\varpi_s-w_0\varpi_s$, \cite [VIII]{BS}, \cite [2.20]{JM}, as is obvious from its description in \cite [3.6.3]{FJ}.  In the Bourbaki notation \cite {Bo}, set $\varepsilon_{u,v}=\sum_{i=u}^v \varepsilon_i$. Then the above Kostant weight takes the form $\varepsilon_{1,s}-\varepsilon_{n-s+1,n}$.  Now suppose that between $C_1,C_k$, there is a further column $C''$ of height $s$ and let $r$ denote the sum of the column heights up to $C''$.  Then the weight of the Benlolo-Sanderson invariant defined by the pair $C_1,C''$ (resp. $C'',C_k$) is just $\varepsilon_{1,s}-\varepsilon_{r-s+1,r}$ (resp. $\varepsilon_{r-s+1,r}-\varepsilon_{n-s+1,n}$).  This gives the following conclusion (which is otherwise not obvious)

  \begin {lemma} The Benlolo-Sanderson invariant for the pair $C_1,C_k$ is the product of the Benlolo-Sanderson invariants for the pairs $C_1,C''$ and $C'',C_k$.
     \end {lemma}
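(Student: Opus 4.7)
The plan is to deduce the factorisation from the multiplicity-free property of $\mathbb{C}[\mathfrak{m}]^{P'}$ together with the additivity of the three $H$-weights already recorded just before the statement.

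First I would assemble the weight data. The weight of the Benlolo-Sanderson invariant attached to the pair $C_1,C_k$ is $\varepsilon_{1,s}-\varepsilon_{n-s+1,n}$, and for the sub-pairs $C_1,C''$ and $C'',C_k$ (with $r$ the sum of column heights up to and including $C''$) the weights are $\varepsilon_{1,s}-\varepsilon_{r-s+1,r}$ and $\varepsilon_{r-s+1,r}-\varepsilon_{n-s+1,n}$ respectively. A trivial cancellation of $\varepsilon_{r-s+1,r}$ shows that the sum of the latter two weights equals the first. Consequently the product of the two Benlolo-Sanderson invariants attached to $(C_1,C'')$ and $(C'',C_k)$ is a $P'$ semi-invariant on $\mathfrak{m}$ of the same $H$-weight as the Benlolo-Sanderson invariant attached to $(C_1,C_k)$.

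Next I would invoke Richardson's theorem in the form recalled in Section \ref{1.1}, namely that $\mathbb{C}[\mathfrak{m}]^{P'}$ is multiplicity-free as an $H$-module. Thus each weight space of $\mathbb{C}[\mathfrak{m}]^{P'}$ is at most one-dimensional, and the two nonzero semi-invariants identified in the previous paragraph must be proportional. This already gives the factorisation up to a nonzero scalar, which is the content of the lemma up to normalisation.

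To pin down the scalar I would appeal to the explicit presentation of the Benlolo-Sanderson invariants as (restrictions of) minors given in \cite[Sect.\ 3]{FJ}. The minor expressing the $(C_1,C_k)$ invariant is built from row indices $1,\ldots,s$ and column indices $n-s+1,\ldots,n$; inserting $C''$ splits the set of intermediate indices into the two blocks $r-s+1,\ldots,r$, and a Laplace/Cauchy-Binet type expansion along these intermediate rows shows that, with the normalisation adopted in \cite[3.6.3]{FJ}, the leading (and by the weight argument above, only) contribution is precisely the product of the two sub-invariants. The main obstacle is really this bookkeeping: the weight argument is essentially immediate, but checking that the scalar is exactly $1$ (and not, say, a sign or a binomial factor) requires one to track carefully the normalisations used to define the Benlolo-Sanderson invariants as minors; once a consistent normalisation is fixed the proportionality constant is forced to be $1$ by evaluating both sides on any single element of $\mathfrak{m}$ on which the left hand side is nonzero.
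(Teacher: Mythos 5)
Your proof is essentially the same as the paper's: you compute that the $H$-weights of the two sub-invariants add up to the weight of the $(C_1,C_k)$ invariant, and then invoke the multiplicity-free property of $\mathbb{C}[\mathfrak m]^{P'}$ from Richardson's theorem to conclude proportionality. The paper stops there, explicitly settling for "coincide up to a non-zero scalar," after first noting that the product is non-zero because the invariant ring is a domain; you take that non-vanishing silently rather than stating it, but that is a trivial point. Your third paragraph, attempting to pin down the scalar to exactly $1$ via a Laplace/Cauchy--Binet expansion of the minor, goes beyond what the paper's proof establishes or needs; it is not wrong in spirit, but it is an extra normalization calculation the paper does not carry out (and which is somewhat hand-waved in your sketch), so it should either be done carefully or omitted in favour of the paper's weaker but sufficient "up to non-zero scalar" conclusion.
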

  \begin {proof}  Since the invariant ring is a domain, the product is non-zero.  By the above computation it has the same weight as the Benlolo-Sanderson invariant for the pair $C_1,C_k$.  By Richardson's theorem which implies multplicity $\leq 1$, these espression must coincide up to a non-zero scalar.
  \end {proof}
  \textbf{Remark.}  Obviously this factorisation also holds for any triple of neighbouring columns.

  \section {Towards the description of the irreducible components of $\mathscr N$ in type $A$.}\label {2}

\subsection{Matrix Co-ordinates}\label{2.1}

From now on we assume that $G$ is simple of type $A$, that is isomorphic to $SL(n)$ for some integer $n>1$.  In this case the Weyl group $W$ is just the symmetric group $S_n$ on $n$ letters.

Let $\textbf{M}_n$, or simply $\textbf{M}$, denote the set of $n \times n$ matrices and as before (\ref {1.6}) we write $x_{i,j}:i,j \in [1,n]$, for the standard matrix units.

Then the  Lie algebra $\mathfrak {sl}(n)$ of $SL(n)$ is just the subspace $\textbf{M}_n$ of  traceless matrices given a Lie bracket through the commutator.

Let $\alpha_i:i \in [1,n-1]$ be the simple roots in the Bourbaki notation \cite {Bo} and set $\alpha_{i,j}:=\alpha_i+\cdots+\alpha_{j-1}$.\, for $i<j$.  Observe that $x_{i,j}$ is the root vector $x_{\alpha_{i,j}}\in \mathfrak n$.

\subsection{Column and Row Labelling}\label{2.2}

We define a diagram $\mathscr D$ to be a set of columns $\{C_i\}_{i=1}^k$ labelled from left to right with the integers $1,2,\ldots,k$.

\

N.B.  We shall not always label the columns in this precise same fashion.  Indeed we may fix $s \in \mathbb N$ and label (sequentially from left to right) only the columns of height $\geq s$ and even then only between a fixed pair of neighbouring columns of height $s$ as in \ref {2.4}.  Again even amongst the columns of height $\geq s$ we may omit some labels as in \ref {5.4.5}.

\

Let $c_i$ denote the height of $C_i:i=1,2,\ldots,k$.

 Let $\{R_j\}_{j=1}^\ell$ be the rows of $\mathscr D$ labelled with positive integers increasing from top to bottom.  For all $i \in \mathbb N^+$, let $R^i$ denote the union of the rows $\{R_j\}_{j=1}^i$.

 % For each column $C_i$ there is a $c_i \times c_i$ block $\textbf{B}_i$ in the Levi factor $\mathfrak l$.  Let $\textbf{C}_i$ denote the rectangular block in $\mathfrak m$ lying above $\textbf{B}_i$.  We call it the $i^{th}$ column block. Its width is $c_i$ and its height $\sum_{j < i}c_j$.

  \subsection{Blocks and Column Blocks}\label{2.3}

  Let $\pi'$ be a subset of $\pi$ and recall that $\mathfrak p_{\pi'}$ is the parabolic subalgebra containing $\mathfrak b$ defined by $\pi'$.  Let $W_{\pi'}$ be the subgroup of $W$ generated by the simple reflections $s_\alpha:\alpha \in \pi'$ and $w_{\pi'}$ its unique longest element.

Recall that the Levi factor $ L_{\pi'}$ of $P_{\pi'}$ is given by a set of blocks $\textbf{B}_i$ on the diagonal of sizes $c_1,c_2,\ldots,c_k$, where the $c_i-1$ are the cardinalities of the connected components of $\pi'$ and $\sum_{i=1}^k c_i=n$.  Its Lie algebra $\mathfrak l_{\pi'}$ is complemented in $\mathfrak p_{\pi'}$ by the nilradical $\mathfrak m_{\pi'}$ of $\mathfrak p_{\pi'}$.

We view the parabolic $P_{\pi'}$ as being specified by the composition $(c_1,c_2,\ldots,c_k)$.  This convention will be used throughout.

As in \ref {1.2}, let $\mathscr D_\mathfrak m$ be the diagram with columns $C_i$ of heights $c_i:i=1,2,\ldots,k$ defined by the block sizes of $L_{\pi'}$.

% and as in \ref {1.6} let $\mathscr T_\mathfrak m$ denote the corresponding tableau.

 For each column $C_i$, let $\textbf{C}_i$ denote the rectangular block in $\mathfrak m$ lying above $\textbf{B}_i$.  We call it the $i^{th}$ column block. Its width is $c_i$ and its height $\sum_{j < i}c_j$.

In this presentation one has $\textbf{C}_1=0$ and $\mathfrak m = \oplus_{i=2}^k \textbf{C}_i$.

%In more detail, let $\mathscr T_\mathfrak m$ be the tableau obtained from $\mathscr D$ in which the integers $1,2,\ldots,n$ are inserted sequentially, first down the columns  and then on going left to right.

Let $\mathscr T$ be a tableau obtained from $\mathscr D$ in which the integers increase down the columns and along the rows from left to right. It need not be a standard tableau because the $c_i$ need not be increasing - in other words there may be gaps in the rows of $\mathscr T$.

Notice that $\mathscr T_\mathfrak m$ of \ref {1.6} obtains from $\mathscr D_\mathfrak m$ by this procedure.

Yet $\mathscr T$ becomes a standard tableau when we shift boxes in a given row from right to left to close up gaps. Indeed the content of each row does not change and in the new tableau entries still increase down the columns.  Then through the Robinson-Schensted correspondence and a result of Steinberg \cite {St}, $\mathscr T$ corresponds to an orbital variety $\mathscr V_\mathscr T$, for $\mathfrak {sl}(n)$ and every orbital variety is so obtained exactly once.

One may recover $\mathscr D$ from $\mathscr T$ by forgetting the entries.  It is called the shape of the tableau.  The set $c_1,c_2,\ldots,c_k$ of column heights determine the dimension of $\mathscr V_\mathscr T$ through the formula
$$2\dim \mathscr V_\mathscr T=n(n-1)-\sum_{i=1}^k c_i(c_i-1). \eqno{(*)}$$

\subsection{Column Shifting}\label{2.4}

To a pair of neighbouring columns $C,C'$ of height $s\geq 1$ in $\mathscr T_\mathfrak m$, a new tableau, associated to a hypersurface orbital variety for $\mathfrak {sl}(n)$ contained in $\mathfrak m$, is constructed as follows.

The construction follows that in \cite [2.15]{JM}, but has a crucial difference.

It is a first instance in which columns are labelled slightly differently than \ref {2.2}.

Thus let $C_j:j \in [1,u]$ denote the columns of height $>s$ between $C,C'$ and set $C_0=C',C_{u+1}=C$.

For all $i \in [1,u]$, let $C^{\leq s}_i$ (resp. $C^{>s}_i$) denote that part of $C_i$ lying on and above (resp. strictly below) row $R_s$.

\

 \textbf{N.B.}   Here there is room for misunderstanding, as we are using the British (rather than the French) convention of starting the columns in row $R_1$ and going downwards.  Thus one might have called $c_i$ the depth, rather than the height of $C_i$.  Then despite the notation $C^{\leq s}_i$ is the \textit{upper} part of $C_i$.  On the other hand the indices in $C^{\leq s}$ are smaller than those in $C^{>s}_i$ - see Figure $1$.

 \

  For $i \in [1,u-1]$, let $C_i'$ denote the column in which $C_i^{>s}$ has been replaced by $C_{i+1}^{>s}$. Let $C_{u+1}'$ (resp. $C_u'$) be the column with just the box containing $n$ removed from row $R_s$ (resp. placed in row $R_{s+1}$). Finally let $C_0'$ be the column with $C_1^{>s}$ adjoined below $C_0$.

  This manipulation is exemplified in Figure $1$ with the result described in Figure $2$.

  Combined with the columns of height $<s$ which are left in their original position, this defines the tableau $\mathscr T_\mathfrak m(n)$.

  More picturesquely $\mathscr T_\mathfrak m(n)$ is obtained from $\mathscr T_\mathfrak m$, by shifting $n\in C'$ into the $R_{s+1}\cap C_u$ and then, ignoring columns of height $<s$ between $C,C'$, shifting those parts of the columns strictly above $R_s$ by one position to the left.

  Notice the \textit{shape} $\mathscr D_{\mathfrak m}(n)$ of  $\mathscr T_{\mathfrak m}(n)$ is obtained from the shape $\mathscr D_{\mathfrak m}$  of  $\mathscr T_{\mathfrak m}$, by replacing the two columns ($C,C'$) of height $s$ by columns $(C'_u,C'_{u+1})$ of heights $s+1,s-1$, and permuting the heights of the remaining columns. Then by \ref{2.3}$(*)$  the dimension of $\mathscr V_{\mathscr T_{\mathfrak m}(n)}$ is one less than $\dim \mathfrak m$.  Hence
   \begin {lemma}  $\mathscr T_{\mathfrak m}(n)$ represents a hypersurface orbital variety in $\mathfrak m$.
   \end{lemma}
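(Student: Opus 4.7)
The statement decomposes into two claims: $\mathscr{V}_{\mathscr{T}_\mathfrak{m}(n)}\subset\mathfrak{m}$, and $\dim\mathscr{V}_{\mathscr{T}_\mathfrak{m}(n)}=\dim\mathfrak{m}-1$. My plan is to read off the column heights of $\mathscr{D}_\mathfrak{m}(n)$ from the construction of \ref{2.4}, extract the dimension from formula $(*)$ of \ref{2.3}, and then verify containment by tracking a compatibility criterion through the column-shifting moves.

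For the dimension, the description preceding the lemma yields: $C'_0$ has height $s+(c_1-s)=c_1$; for $i\in[1,u-1]$, $C'_i$ has height $s+(c_{i+1}-s)=c_{i+1}$; $C'_u$ has height $s+1$; and $C'_{u+1}$ has height $s-1$. Together with the untouched columns of height ${<}s$, the multiset of column heights of $\mathscr{D}_\mathfrak{m}(n)$ differs from that of $\mathscr{D}_\mathfrak{m}$ only by the substitution $\{s,s\}\mapsto\{s+1,s-1\}$, so
\[
\sum_i c^{\text{new}}_i(c^{\text{new}}_i-1)-\sum_i c_i(c_i-1)=(s+1)s+(s-1)(s-2)-2s(s-1)=2.
\]
Two applications of $(*)$ then give $\dim\mathscr{V}_{\mathscr{T}_\mathfrak{m}(n)}=\dim\mathscr{V}_{\mathscr{T}_\mathfrak{m}}-1$. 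Since $\sum c_i=n$ forces $2\dim\mathfrak{m}=n^2-\sum c_i^2=n(n-1)-\sum c_i(c_i-1)$, which equals $2\dim\mathscr{V}_{\mathscr{T}_\mathfrak{m}}$ by $(*)$, we conclude $\dim\mathscr{V}_{\mathscr{T}_\mathfrak{m}(n)}=\dim\mathfrak{m}-1$.

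For the containment, I would invoke the combinatorial criterion that an orbital variety associated to a tableau $\mathscr{T}$ of shape $\lambda$ lies in $\mathfrak{m}_{\pi'}$ exactly when, after gap-closing $\mathscr{T}$ to a standard tableau, the entries $\{1,\ldots,r_j\}$ with $r_j:=c_1+\cdots+c_j$ occupy a Young subdiagram fitting inside the first $j$ columns of $\lambda$ for every $j$. This condition holds tautologically for $\mathscr{T}_\mathfrak{m}$; under the construction the lower parts $C_i^{>s}$ merely shift one column leftward without crossing any cut-off $r_j$, while the relocated box labelled $n$ stays in the rightmost affected column, so the criterion survives into $\mathscr{T}_\mathfrak{m}(n)$. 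Combined with the dimension count, $\overline{\mathscr{V}_{\mathscr{T}_\mathfrak{m}(n)}}$ is a codimension-one closed irreducible subvariety of the irreducible variety $\mathfrak{m}$, i.e.\ a hypersurface orbital variety in $\mathfrak{m}$. The delicate step is this containment check: the combinatorial criterion must be stated sharply enough to be monitored through each column move, especially the descent of box $n$ into row $R_{s+1}$ and the subsequent gap-closing. The dimension computation itself is purely formal once $(*)$ is in hand.
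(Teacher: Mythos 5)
Your dimension count is correct and is precisely the paper's argument: one reads off from the construction of \ref{2.4} that the multiset of column heights changes only by the substitution $\{s,s\}\mapsto\{s+1,s-1\}$, applies $(*)$ of \ref{2.3} twice, and uses $\dim\mathfrak{m}=\dim\mathscr{V}_{\mathscr{T}_{\mathfrak{m}}}$ to conclude $\dim\mathscr{V}_{\mathscr{T}_{\mathfrak{m}}(n)}=\dim\mathfrak{m}-1$. That part is sound and matches the paper line for line.

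The gap is in the containment $\mathscr{V}_{\mathscr{T}_{\mathfrak{m}}(n)}\subset\mathfrak{m}$. You invoke a ``fitting'' criterion --- that in the gap-closed standard tableau the boxes carrying entries $\{1,\ldots,r_j\}$ form a Young subdiagram sitting inside the first $j$ columns of $\lambda$ for every $j$ --- but this criterion is neither stated nor proved in the paper and you cite no source for it; as written it is not obviously equivalent to $\mathfrak{n}\cap w_{c(\mathscr{T})}\mathfrak{n}\subset\mathfrak{m}_{\pi'}$. Even granting it, your verification that it ``survives'' the moves of \ref{2.4} is only asserted: shifting the blocks $C_i^{>s}$, lowering the box $n$ into $R_{s+1}\cap C_u$, and then gap-closing changes both the filling and the shape, so the monotonicity you claim needs an actual check, not a statement that nothing crosses a cut-off. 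The paper handles containment differently and concretely: it does not try to fold it into this lemma at all but establishes it inside the proof of Proposition \ref{2.6}, by writing out the word form of $w=w_c(\mathscr{T}_{\mathfrak{m}}(n))$ and applying Lemma \ref{2.5} to identify the excluded roots $S(w^{-1})$, from which $\mathfrak{u}=\mathfrak{n}\cap w(\mathfrak{n})\subset\mathfrak{m}$ is read off directly. If you want a self-contained proof of the lemma including containment, replacing your unsubstantiated tableau criterion with that inversion count on $w_c(\mathscr{T}_{\mathfrak{m}}(n))$ is the clean route.
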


%This last property means, according to Melnikov \cite [2.18]{JM}, that $\mathscr T_{\mathfrak m}(n)$ represents a hypersurface orbital variety in $\mathfrak m$.

Let $t_\mathfrak m(n)$,  denote the largest entry in the first column of $\mathscr T_{\mathfrak m}(n)$, equivalently the largest entry of $C_1$.  One may remark that $t_\mathfrak m(n)=n$, if there are no columns of height $\geq s$ strictly between $C,C'$.

\

\textbf{N.B.}  $\mathscr T_{\mathfrak m}(n)$ is not the tableau obtained in \cite [2.17]{JM}.  For the latter we must further move all the boxes as far as possible to the left to close up the gaps in the rows.  By (Prop. \ref {2.6} combined with \cite [2.17]{JM})  these tableaux represent the same orbital variety but are described by different elements of the Weyl group, the latter not being uniquely determined by the orbital variety.

\subsection{Weyl Group Elements}\label{2.5}

To a tableau $\mathscr T$ we associate an element $w_c(\mathscr T)\in W$, following closely \cite [2.8]{JM}.  As noted in \cite [Lemma 2.8]{JM} this element comes from the Robinson-Schensted correspondence.

The construction of $w_c(\mathscr T)$ is as follows.  Let $C_1,C_2,\ldots,C_k$ be the columns of $\mathscr T$ with their entries inserted and viewed as a sequence of integers.  Given a sequence $S$ of integers, let $\hat{S}$ denote the sequence taken in the opposite order.  Then  $w_c(\mathscr T)$ is given as an element of $S_n$ through the word form $[\hat{C}_1,\hat{C}_2,\ldots,\hat{C}_k]$.

The orbital variety closure defined by $\mathscr T$, through the Robinson-Schensted
correspondence, is just $\overline{B.(\mathfrak n \cap w_{c(\mathscr T)}\mathfrak n)}$.

We remark that in particular  $w_c(\mathscr T_{\mathfrak m_{\pi'}}) =w_{\pi'}$.

Different choices of $\mathscr T$ may given the same orbital variety closure.  The possible choices are determined by the Robinson-Schensted correspondence.  Here we believe we are making a better choice than in \cite [2.18]{JM} by not closing up gaps in the rows of a tableau.  This improvement manifests itself in Prop. \ref {6.9.8}.

%Observe that  $w_c(\mathscr T_{\mathfrak m_{\pi'}})$ is just $w_{\pi'}$.

%Thus $\mathfrak m=\mathfrak n \cap w_c(\mathscr T_{\mathfrak m})(\mathfrak n)$.

For all $w \in W$, set $S(w):=\{\alpha \in \Delta^+| w\alpha \in \Delta^-\}$ and $\mathfrak u:= \mathfrak n \cap w(\mathfrak n)=\sum_{\alpha \in \Delta^+ \setminus S(w^{-1})}\mathbb C x_{\alpha}$.

The root spaces in $\mathfrak n$ which do not lie in $\mathfrak n \cap w(\mathfrak n)$ are the $x_\alpha : \alpha \in S(w^{-1})$.  These are determined by the following well-known result, see for example \cite [Lemma 2.3]{JM}, which we reproduce below

\begin {lemma} Suppose $i<j$.  Then $\alpha_{i,j} \in S(w^{-1})$ if and only if $i$ is after $j$ in the word form of $w$.
%$j,j-1,\ldots,i$ is a subsequence of the word form of $w$, then $S(w^{-1})$ contains the positive roots generated by $\pi_{i,j}:=\{\alpha_i,\alpha_{i+1}, \ldots \alpha_{j-1}\}$.
\end {lemma}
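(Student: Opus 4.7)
The plan is to prove this by a direct unraveling of the definitions, since in type $A$ the statement is essentially a combinatorial reformulation of the definition of $S(w^{-1})$.

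First I would identify $\alpha_{i,j}$ in Bourbaki notation as $\varepsilon_i - \varepsilon_j$, so that the condition $\alpha_{i,j}\in S(w^{-1})$ becomes $w^{-1}(\varepsilon_i - \varepsilon_j)\in \Delta^-$. Using the standard action of $W=S_n$ on the $\varepsilon$-basis, $w\cdot \varepsilon_i = \varepsilon_{w(i)}$, this gives
\[
w^{-1}(\alpha_{i,j}) \;=\; \varepsilon_{w^{-1}(i)} - \varepsilon_{w^{-1}(j)},
\]
which lies in $\Delta^-$ if and only if $w^{-1}(i)>w^{-1}(j)$.

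Next I would translate the right-hand inequality into the word form language. In the word $[w(1),w(2),\ldots,w(n)]$ the integer $i$ occupies position $w^{-1}(i)$, and likewise for $j$. Hence the inequality $w^{-1}(i)>w^{-1}(j)$ is precisely the statement that $i$ appears after $j$ in the word form of $w$, completing the equivalence.

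There is really no obstacle of substance; the only care needed is to keep the action convention consistent with the word form convention. A quick sanity check is the consistency condition $w_c(\mathscr T_{\mathfrak m_{\pi'}}) = w_{\pi'}$ noted in \ref{2.5}: the tableau has $C_1 = (1,\ldots,c_1),\ C_2 = (c_1+1,\ldots,c_1+c_2),\ldots$, so $[\hat C_1,\hat C_2,\ldots,\hat C_k]$ begins $c_1,c_1-1,\ldots,1,c_1+c_2,\ldots,c_1+1,\ldots$, matching $[w_{\pi'}(1),\ldots,w_{\pi'}(n)]$ for the longest element $w_{\pi'}$ of $W_{\pi'}$. This confirms that the word form is indeed $[w(1),\ldots,w(n)]$ and that the translation in the previous paragraph is correct.
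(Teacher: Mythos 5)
Your proof is correct, and it follows the standard direct unfolding of definitions: identify $\alpha_{i,j}=\varepsilon_i-\varepsilon_j$, compute $w^{-1}(\alpha_{i,j})=\varepsilon_{w^{-1}(i)}-\varepsilon_{w^{-1}(j)}$, and translate the resulting inequality $w^{-1}(i)>w^{-1}(j)$ into the word-form language; the consistency check against $w_c(\mathscr T_{\mathfrak m_{\pi'}})=w_{\pi'}$ is a sensible way to pin down the sign/convention. The paper itself does not prove this lemma --- it merely labels it well-known and cites [JM, Lemma 2.3] --- so your argument supplies the omitted details rather than diverging from a proof the paper gives.
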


\subsection{The Benlolo-Sanderson Invariant and its Zero Set}\label{2.6}

 In this section, $C,C'$ are neighbouring columns of height $s$.  In describing the associated Benlolo-Sanderson invariant we can ignore all columns not between this pair. Thus we can take $C=C_1 ,C'=C_k$ and revert to the labelling of the columns given by \ref {2.2}.

Following \cite [4.1.6]{FJ} we describe the Benlolo-Sanderson invariant in the following way.

Let $M_s$ be the $n-s \times n-s$ minor in the bottom left hand corner.  We consider $M_s$ as a polynomial function on $\mathfrak m+\Id$, through the Killing form. The leading term, $\gr M_s$, of its restriction is the Benlolo-Sanderson invariant.   As noted in \cite {BS} its degree $d(\gr M_s)$ is given by
$$d(\gr M_s)= \sum_{i=1}^{k-1} \min (c_i,s). \eqno{(*)}$$

%Let $d_\mathscr D$ denote the total number of boxes in $\mathscr D$ between the columns $C,C'$ on rows $R_{s'}:s'>s$, then the right hand side above is just $m-s-d_\mathscr D$.  Moreover as already noted in \cite [Lemma 2]{BS} the evaluation of $M_s$ on $\mathfrak m+a\Id$ is divisible by $a^{d_\mathscr D}$ and $d(\gr M_s)$ is obtained by dividing by $a^{d_\mathscr D}$ and setting $a=0$.  This will become obvious from the computation below (as long as one knows where one is going), as in the proof of \cite [Prop. 3.8]{JM}.

As noted in \ref {2.4}, $\mathscr T_{\mathfrak m}(n)$ corresponds to a hypersurface orbital variety in $\mathfrak m$. This is made more precise by the following.

\begin {prop} Set $w=w_c(\mathscr T_{\mathfrak m}(n))$. The closure of $B.(\mathfrak n \cap w(\mathfrak n))$ is the zero variety of $\gr M_s$ in $\mathfrak m$.
\end {prop}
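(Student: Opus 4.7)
The plan is to identify both sides of the asserted equality as closures of one and the same hypersurface orbital variety in $\mathfrak m$, using the Robinson--Schensted correspondence to mediate between the tableau description on the left and the invariant-theoretic description on the right.

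First I would argue that the left-hand side is already known to be a hypersurface orbital variety closure. Indeed, by the discussion in \S\ref{2.5}, $\overline{B.(\mathfrak n \cap w(\mathfrak n))}$ with $w=w_c(\mathscr T_{\mathfrak m}(n))$ is precisely $\overline{\mathscr V_{\mathscr T_{\mathfrak m}(n)}}$, and the Lemma of \S\ref{2.4} (based on the dimension formula \S\ref{2.3}$(*)$ applied to the shape $\mathscr D_{\mathfrak m}(n)$) guarantees that this is a hypersurface orbital variety in $\mathfrak m$. On the other side, the zero variety of $\gr M_s$ is irreducible, closed, $P$-stable, and of codimension one, since $\gr M_s$ is an irreducible weight-vector generator of $\mathbb C[\mathfrak m]^{P'}$; by the consequence of Richardson's theorem recalled in \S\ref{1.1}, it is therefore the closure of a hypersurface orbital variety $\mathscr V$ in $\mathfrak m$.

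Second, I would match these two hypersurface orbital varieties via Robinson--Schensted. The tableau $\mathscr T_{\mathfrak m}(n)$ built in \S\ref{2.4} differs from the tableau constructed in \cite[2.17]{JM} only by the presence of gaps within rows; the two have the same content in each row. Consequently, the gap-closing procedure described at the end of \S\ref{2.3} produces from both tableaux the \emph{same} standard tableau. By Steinberg's theorem this standard tableau indexes a single orbital variety in $\mathfrak{sl}(n)$, which by \cite[2.17]{JM} is exactly $\mathscr V$, the closure of which is the zero variety of $\gr M_s$. Combining with the first step gives the equality
\[
\overline{B.(\mathfrak n \cap w(\mathfrak n))}=\overline{\mathscr V_{\mathscr T_{\mathfrak m}(n)}}=\overline{\mathscr V},
\]
which is the required statement.

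The main obstacle I expect is the careful verification in the second step, namely that the column-shifting rule of \S\ref{2.4} and the gap-closed rule of \cite[2.17]{JM} produce tableaux with identical row contents, and that the identification of $\overline{\mathscr V}$ with the zero variety of $\gr M_s$ given in \cite[2.17]{JM} transfers cleanly to our setting where gaps are not closed. Once these combinatorial compatibilities are in place, the proof reduces to matching two irreducible subvarieties of $\mathfrak m$ of the same (codimension one) dimension that are both realised as the unique hypersurface orbital variety closure associated with the common underlying standard tableau.
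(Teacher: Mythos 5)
Your proposal takes a genuinely different route from the paper. The paper gives a direct, self-contained inductive proof: it expands the Benlolo-Sanderson minor $M_s$ on $\mathfrak u := \mathfrak n\cap w(\mathfrak n)$, uses the factorisation Lemma of \S\ref{1.10} to reduce the number of tall columns, shows $\gr M_s$ vanishes on $\mathfrak u$, and then closes by the codimension-one argument from Lemma \ref{2.4}. You instead want to identify both sides as hypersurface orbital variety closures and then match them through the Robinson--Schensted correspondence together with the identification in Joseph--Melnikov.

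There are two real problems with this plan. First, your key step -- that $\overline{B.(\mathfrak n\cap w_c(\mathscr T_{\mathfrak m}(n))\mathfrak n)}$ equals the orbital variety closure attached to the \emph{gap-closed} standard tableau -- amounts to the assertion that the column word of the tableau-with-gaps and the column word of its gap-closed version are Knuth-equivalent (hence have the same RS insertion tableau). This is a non-trivial combinatorial fact that you do not establish, and the paper does not treat it as available: the N.B. at the end of \S\ref{2.4} explicitly says that $\mathscr T_{\mathfrak m}(n)$ and the tableau of \cite[2.17]{JM} ``represent the same orbital variety'' as a consequence of ``Prop. \ref{2.6} combined with \cite[2.17]{JM}.'' In other words, under the paper's logic this equivalence is a corollary of the proposition you are trying to prove, so invoking it here would be circular. (The discussion in \S\ref{2.5} cites \cite[Lemma 2.8]{JM}, which concerns standard tableaux; it does not by itself justify the claim for tableaux with gaps.)

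Second, even granting the Knuth-equivalence, you are leaning on the Joseph--Melnikov identification of the zero variety of the Benlolo--Sanderson invariant with the orbital variety of the gap-closed tableau. The relevant result is \cite[Prop. 3.8]{JM}, not \cite[2.17]{JM} (which merely constructs the tableau), and it relies on Melnikov's theory -- something the paper deliberately avoids, as it notes at the outset of its own proof. Besides redundancy, the direct proof also yields explicit information about the excluded roots that is put to work in \S\ref{2.7} and beyond, which your citation-based route does not supply.

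So while your high-level picture is a sensible way to \emph{recognise} that the proposition ought to be true, as written it has a genuine gap (the unproven Knuth-equivalence) and a circularity issue, and it trades away the self-containedness and the auxiliary information that make the paper's direct argument useful downstream.
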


\begin{proof}  The argument is not dissimilar to the proof of \cite [Prop. 3.8]{JM}, except that we do not use Melnikov theory.

As in \ref {2.5} set $\mathfrak u:= \mathfrak n \cap w(\mathfrak n)$.  By Lemma \ref {2.5} and the construction, one obtains $\mathfrak u \subset \mathfrak m $.

Let $d_\mathscr D$ denote the total number of boxes in $\mathscr D$ between $C_1,C_k$ on rows $R_{s'}:s'>s$ and $m= \sum_{i=1}^k c_i$.  One easily checks that the right hand side of $(*)$ is just $m-s-d_\mathscr D$.  Moreover as already noted in \cite [Lemma 2]{BS} the evaluation of $M_s$ on $\mathfrak m+a\Id$ is divisible by $a^{d_\mathscr D}$ and the value of $gr M_s$ is obtained by dividing the evaluation of $M_s$ on $\mathfrak m + a\Id$  by $a^{d_\mathscr D}$ and setting $a=0$.  As in the proof of \cite [Prop. 3.8]{JM}, this will become obvious from the computation below (as long as one knows where one is going).

As in \cite [Prop. 3.8]{JM}, the easiest case is when $\mathscr T_{\mathfrak m}$ has height $s$, in other words $c_i<c_1=c_k=:s$ for all $i$ with $1<i<k$ in the labelling of \ref {2.2}. In this case $d_\mathscr D=0$ and $\gr M_s$ is obtained by evaluation of $M_s$ on $\mathfrak m$. Moreover $\mathfrak u$ is just $\mathfrak m$ with its last column removed.  In this case $M_s$ obviously vanishes on $\mathfrak u$.
%A fortiori the Benlolo-Sanderson invariant vanishes on $\mathfrak u$.

We establish the general case by induction on the number of columns of height $>s$ between the neighbouring columns $C_1,C_k$ of height $s$. If this number is zero, then we are in the previous case settled above.

Let us call the root vectors in $\mathfrak m$ not lying in $\mathfrak u$, the excluded root vectors.  Each lie on the intersection of a column and row of $\textbf{M}_n$.  In the matrix presentation we describe them by a $0$ entry, because this entry is set equal to zero.  In \ref {6.9.3} we denote then by a $O$ entry since it may encircle another entry.

 Let $C'$ be the leftmost column of $\mathscr D_\mathfrak m$ of height $r+s>s $ strictly to the right of $C_1$.

%Recall that to obtain $\mathscr T_{\mathfrak m}(n)$, the box containing $n$ is moved down to the $(s+1)^{th}$ row and at the same time the boxes in the columns of height $>s$ are shifted one to the left as counted by ignoring the columns of height $< s$.

Recall the notation of \ref {2.4} and set $t=t_\mathfrak m(n)$. It is just the largest entry in $C'$.  Then $t-j: j \in [0,r-1]$ are the remaining entries in $C'^{>s}$.

Take $v \in t-j: j \in [0,r-1]$.
%the first column of $\mathscr T_\mathfrak m(n)$.
In the word form of $w=w_c(\mathscr T_{\mathfrak m}(n))$, every positive integer $<v$ appears after $v$.  Then by Lemma \ref {2.5}, the entire column in $\textbf{M}_n$ strictly above the $(v,v)$ entry is excluded from $\mathfrak u$.  This means that evaluation of $M_s$ on $\mathfrak u$, sets all the entries above the $(v,v)$ entry equal to $0$. On the other hand the $(v,v)$ entry itself is $a$.

We conclude that the evaluation of $M_s$ on $\mathfrak u$ is $a^r$ times the evaluation of $M_s$ on the corresponding matrix in which $C'$ has been replaced by a column $C''$ of height $s$.  (If one likes one can shift the indices on columns to its right down by $r$ but this is of no particular importance.)   One may note that there are no excluded roots in the columns to the left of $C'$ and the excluded roots to the right remain unchanged, up to this renumbering. All this is an easy consequence of the way $w$ is defined.  Simply the displacements described in \ref {2.4} concerning the columns strictly to the right of $C'$ are unchanged by a change in the height of $C'$.

Now our new diagram has three columns of height $s$, namely $C_1,C'',C_k$.  By Lemma \ref {1.10} the Benlolo-Sanderson invariant $\gr M_s$ is a product of the corresponding Benlolo-Sanderson invariants for the two neighbouring pairs.  The left hand factor need not vanish on $\mathfrak u$ but the right hand factor corresponding to the columns between $C'',C_k$ vanishes on $\mathfrak u$ because the excluded roots have not changed between these columns whilst $d_\mathscr D$ has been decreased by $r>0$, so the induction hypothesis applies.

We conclude that $\gr M_s$ vanishes on $\mathfrak u$ and since it is $P$ semi-invariant, it also vanishes on $B.\mathfrak u$, so on its closure.

Finally by Lemma \ref {2.4}, $B.\mathfrak u$ has codimension $1$ in $\mathfrak m$.  Consequently the zero variety of $\gr M_s$ is precisely the closure of the irreducible set $B.\mathfrak u$.

\end {proof}

\subsection{The Excluded Roots}\label{2.7}

Define $\mathfrak u$ as in \ref {2.5}.  It and its complement in $\mathfrak n$ is spanned by root vectors.  The latter are called the excluded root vectors, or excluded co-ordinates, and their roots, the excluded roots.  These were partly determined in \ref {2.6}. Here we determine them all explicitly.

\

First recall how we have labeled the columns $C_i:i \in [0,u+1]$ in \ref {2.4} in $\mathscr T_\mathfrak m$.  In particular $C_0,C_{u+1}$ are neighbouring columns of height $s$, whilst $C_i:i \in [1,u]$ are of height $>s$.  Columns outside this pair and columns of height $<s$ within this pair are \textit{left unlabelled}.

  By Lemma \ref {2.5} the excluded roots are those whose roots lie in $S(w^{-1})$.

 For all $j \in [1,u]$, let $t_j$ be the largest entry in $C_j$. Let
 $C_j^i$ be the $i^{th}$ column of  $\textbf{C}_j$ viewed as a column in $\textbf{M}$.  Choose $\ell_j \in \mathbb N^+$, so that $\textbf{C}_j^{\ell_j}$ is the rightmost column of  $\textbf{C}_j$.

 The excluded co-ordinates of $\mathfrak m$ lie in $\textbf{C}_j^{\ell_j+1-i}:i \in [1,c_j-s]:j \in [1,u]$ and in the last column of $\textbf{M}$.  They are given below.
 % inductively on increasing $j$.

  First with respect to above labelling the excluded co-ordinates do not depend on the choice of $i \in [1,c_j-s]:j \in [1,u]$, so we can just describe which elements in $\textbf{C}_j^{\ell_j}:j \in [1,u]$ are excluded.
  %Moreover the excluded co-ordinates in the last column of $\textbf{M}$, that is to say $C_{u+1}^s$ follows the same pattern as given below.

  Recall that the Levi factor of $\mathfrak p$ is given by  $c_i\times c_i$ blocks $\textbf{B}_i$ down the diagonal of  $\textbf{M}$ defined by the heights $c_i$ of the columns $C_i:i \in [1,u]$ of $\mathscr T_\mathfrak m$.

  Fix $j \in [1,u+1]$.  The excluded co-ordinates in $\textbf{C}_j^{\ell_j}$ form a single column starting from the highest row of $\textbf{B}_{j-1}$ down to lowest row of $\textbf{C}_j$ (which is row just above the highest row of $\textbf{B}_j$) with a gap between the $(s+1)^{th}$ and $c_{j-1}^{th}$ row of $\textbf{B}_{j-1}$. Notice that if $j=1$, then $c_{j-1}=s$ and so there is no gap. (This was true for the special case considered in \ref {2.6}.) One may remark that the excluded co-ordinates below this gap all correspond to columns of height $<s$ between $C_{j-1},C_j$.

  The excluded co-ordinates in the last column of $\textbf{M}$, that is to say in $\textbf{C}_{u+1}^s$, follow the rule given above.  There are no multiple columns of excluded roots in the rightmost column block $\textbf{C}_{u+1}$, because only one element namely $n$ is shifted to the left in the construction of \ref {2.4}.

   \

\textbf{Example 1.}  Take the composition $(2,1,3,1,4,1,2)$.  Let us just compute the excluded co-ordinates arising from the neighbouring columns of height $2$.  The first set occurs in column $6$ in rows $(1,2,3)$.  The second set occurs in columns $10,11$ in rows $4,5,7$ illustrating the gap and there being two columns, since $c_5-s=2$.  The final set occurs in column $14$ (only) in rows $8,9,12$.
One may remark that in this case $\mathfrak u$ is stable under the action of the space $\mathfrak l^-$ spanned by the negative root vectors of $\mathfrak p$.  This is a general fact.  Thus $B.\mathfrak u$ is $P$ stable, as is to be expected.

\

\textbf{Example 2.}  Take the composition $(1,3,2,1)$.  Then $w=4316275$ in word presentation.  Then the co-ordinates in $\mathfrak m$ which are excluded form the set \newline $\{(1,3),(1,4),(2,6),(5,7)\}$.  On the other hand the recipe of \ref {5.4} gives a Weierstrass section with $e=x_{1,2}+x_{2,5}+x_{3.6}+x_{6,7}$ and $V =x_{5,7}$.  In this the co-ordinates which make up $e$ (resp. $V$) do not lie (resp. do lie) in the excluded set.  We show (Cor. \ref {6.9.8}) that this holds in all cases, which a priori is quite \textit{extraordinary} and as we shall see has an important interpretation.

\

Extending partly our previous notation we let $L$ (resp. $L^-$) denote the connected subgroup of $G$ with Lie algebra $\mathfrak h$ together with root vectors $x_\alpha$ with $\alpha \in \Delta \cap \mathbb Z\pi'$ (resp.  $\alpha \in \Delta \cap -\mathbb N\pi')$.

\begin {lemma}  $\mathfrak u:=\mathfrak n \cap w(\mathfrak n)$ is stable under the adjoint action of $L^-$.  In particular $B.\mathfrak u$ and its closure are $P$ stable.
\end {lemma}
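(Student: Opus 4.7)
By the proof of Proposition \ref{2.6} we have $\mathfrak u\subset\mathfrak m$, and since $\mathfrak m$ is $\mathfrak l$-stable, showing that $\mathfrak u$ is $L^-$-stable reduces to the root-theoretic claim: for every non-excluded $\gamma\in\Delta^+\cap\Delta(\mathfrak m)$ and every positive Levi root $\beta\in\Delta^+\cap\mathbb Z\pi'$ with $\gamma-\beta\in\Delta^+$, one has $\gamma-\beta\notin S(w^{-1})$.

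I would verify this column block by column block using \ref{2.7}. For an (original-labelling) block index $i$, the component $\mathfrak l_i^-$ of $\mathfrak l^-$ annihilates $\textbf C_j$ when $i>j$; when $i=j$ it acts by right-multiplication carrying an entry of column $q$ of $\textbf C_j$ into column $p<q$; and when $i<j$ it acts by left-multiplication carrying an entry of row $p$ of $\textbf C_j$ (inside $\textbf B_i$) into row $q>p$ of the same block. The case $i=j$ is immediate because \ref{2.7} places the excluded coordinates of $\textbf C_j$ in its rightmost $c_j-s$ columns, with identical row patterns across those columns, so moving leftwards is innocuous. The case $i<j$ rests on the structural fact, also drawn from \ref{2.7}, that within each row-block $\textbf B_i$ contributing rows to $\textbf C_j$, the excluded rows form a \textit{top initial segment} of $\textbf B_i$ (possibly all rows, possibly none), so its complement is a bottom segment, stable under the downward shift $p\mapsto q>p$ induced by $\mathfrak l_i^-$.

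The main obstacle is to extract the top-segment property cleanly from \ref{2.7}, whose description uses the relabelling of \ref{2.4} rather than the original labelling of \ref{2.2}; the two examples in \ref{2.7} serve as a sanity check. For the \textit{in particular} assertion, one uses that $L^-\cdot B=P=B\cdot L^-$, which holds because $L^-\cap B=H$ and $\dim L^-+\dim B-\dim H=\dim P$. Given $\ell\in L^-$ and $b\in B$, writing $\ell b=b'\ell'$ with $b'\in B$, $\ell'\in L^-$, one obtains
$$\ell.(b.\mathfrak u)=b'.(\ell'.\mathfrak u)\subset b'.\mathfrak u\subset B.\mathfrak u$$
from the $L^-$-stability of $\mathfrak u$. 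Thus $B.\mathfrak u$ is $L^-$-stable, and being also $B$-stable, it is stable under the group $P=\langle B,L^-\rangle$ that they generate; its closure inherits this property by continuity.
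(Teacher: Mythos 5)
Your block-by-block verification of the $L^-$-stability of $\mathfrak u$ is sound and is essentially the paper's own argument expanded: the paper appeals to the same two structural facts from \ref{2.7} (excluded coordinates occupy the last $c_j-s$ columns of each column block with an identical row pattern across those columns, and within each Levi row-block the excluded rows form a top segment), and your three-case analysis according to whether the Levi block index $i$ is greater than, equal to, or less than the column-block index $j$ makes precise the stability under the downward/leftward shifts that the paper only gestures at. Your caution about the gap described in \ref{2.7} is in fact an improvement: the paper's phrase "an unbroken set of rows" glosses over it, while your "top initial segment within each $\textbf B_i$" is the correct invariant.

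The "in particular" step, however, has a genuine gap. You assert $L^-\cdot B = P = B\cdot L^-$ from $L^-\cap B = H$ and $\dim L^- + \dim B - \dim H = \dim P$. This dimension count only shows that $L^-B$ and $BL^-$ are dense (open) constructible subsets of $P$, not that either equals $P$: neither is a subgroup, and when $L\supsetneq H$ the set $L^-B_L$ is the big Bruhat cell of $L$, which misses the lower cells. (Moreover $L^-B$ and $BL^-$ are each other's inverses but in general distinct, already for $L$ of type $A_1$.) So the decomposition $\ell b = b'\ell'$ you invoke is not available for every pair. What does survive is the closure statement: $\overline{B.\mathfrak u}$ is closed, $B$-stable, and stable under the dense subset $L^-B\subset P$, hence $P$-stable by continuity. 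For $B.\mathfrak u$ itself the density argument is insufficient; the paper's own proof claims this by a separate induction on the number of columns, and indeed this part is delicate — already for the composition $(2,2)$ one has $\mathfrak u=\mathbb C x_{1,3}+\mathbb C x_{2,3}$, and $B.\mathfrak u$ consists of the rank-$\leq 1$ matrices in the corner $2\times 2$ block whose row space is not $\mathbb C e_2$, which is strictly smaller than $P.\mathfrak u$ (the whole rank-$\leq 1$ locus). So only the closure conclusion can be reached by the route you propose.
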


\begin {proof}  The first statement holds because the excluded elements lie in the last $c_j-s$ columns of each column block $\textbf{C}_j:j \in [1,u+1]$ and in the last column of $\textbf{M}$ and lie in an unbroken set of rows down to a row determined by the previous column blocks as given above.  The second statement holds by induction of the number of columns.  The last part is clear.
\end {proof}

\subsection{The Key $B$ Saturation Set}\label{2.8}

For any pair of neighbouring columns, equivalently for any hypersurface orbital variety in the nilradical $\mathfrak m_{\pi'}$ of a parabolic $\mathfrak p_{\pi'}$, we obtain a subspace $\mathfrak u_i:=\mathfrak n \cap w_i \mathfrak n$.  Here $w_i$ is the \textit{very specific } element of $W$ constructed above.

Set $\mathfrak u_{\pi'} = \cap_{i=1}^k \mathfrak u_i$. Since the nilfibre  $\mathscr N$ is just the intersection of the $\overline{B.\mathfrak u_i}$, we obtain $\overline {B.\mathfrak u_{\pi'}} \subset \mathscr N$.

Let $g$ denote the number of pairs of neighbouring columns of $\mathscr D_\mathfrak m$. By \ref {1.2} the number of generators of the polynomial algebra $\mathbb C[\mathfrak m]^{P'}$ is just $g$.

It is clear that $\mathfrak u_{\pi'}$ is a subalgebra of $\mathfrak m$.  If it is complemented in $\mathfrak n$ by a subspace spanned by the root vectors, then $\overline{B.\mathfrak u_{\pi'}}$ is an orbital variety closure.  In this case we obtain from Robinson-Schensted theory that $\dim \overline{B.\mathfrak u_{\pi'}}=\dim \mathfrak m -g$.

It is easy to check that $ \mathfrak u_{\pi'}$ is complemented in $\mathfrak n$ by a subalgebra if the neighbouring columns form a nested sequence of strictly decreasing height, for example if $\pi'$ is defined by the composition $(3,2,1,1,2,3)$.  This fails for $(2,1,1,1,2)$ and for $(1,2,2,1)$. In the first case $\overline{B.\mathfrak u_{\pi'}}$ is not an orbital variety closure, in the second case it is, as noted in \ref {6.10.9}, Examples $2$, $3$.

A main result of this paper is the very remarkable fact (Cor. \ref {6.9.8}) that $\overline {B.\mathfrak u_{\pi'}}$ is an irreducible component of $\mathscr N$ of dimension $\dim \mathfrak m-g$.

\subsection{More on Excluded Roots}\label{2.9}

Let $C_1,C_2,\ldots,C_k$ be the columns of $\mathscr T_{\mathfrak m}$.  Let $\mathscr T'_{\mathfrak m}$ be the tableau obtained from $\mathscr T_{\mathfrak m}$ by omitting the last column $C_k$ and set $\mathfrak u'=\cap_{i=1}^{k-1}  \mathfrak u_i$, that is to say omitting the contribution from the last column.

\begin {lemma}

$(i)$. There are no excluded roots in the last column block $\textbf{C}_k$ implied by $\mathfrak u'$.

\

$(ii)$.  If there is no column neighbouring to $C_k$, there are no excluded roots in the last column block $\textbf{C}_k$ implied by $\mathfrak u_{\pi'}$.
\end {lemma}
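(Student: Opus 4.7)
My plan is to reduce both assertions to a single observation extracted from the explicit description of excluded roots in \ref{2.7}. I would fix a pair of neighbouring columns of height $s$ and, in the labelling of \ref{2.4}, identify its members as $C_0$ (left), $C_1,\ldots,C_u$ (intermediate columns of height strictly greater than $s$), and $C_{u+1}$ (right). The key observation to extract from \ref{2.7} is that all excluded roots of the associated $\mathfrak u_i$ are confined to the column blocks $\textbf{C}_j$ with $j\in [1,u+1]$: they appear only in the rightmost $c_j-s$ columns of each intermediate block $\textbf{C}_j$ ($j\in [1,u]$) and in the single rightmost column of the right block $\textbf{C}_{u+1}$. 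In particular no excluded root of $\mathfrak u_i$ lies in a column block strictly to the right of the right member $C_{u+1}$ of the pair.

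Granting that observation, part $(i)$ is essentially immediate. Every $\mathfrak u_i$ contributing to $\mathfrak u'$ is attached to a pair of neighbouring columns both belonging to $\{C_1,\ldots,C_{k-1}\}$; in particular its right member $C_b$ satisfies $b\leq k-1$. The associated excluded roots are then contained in the column blocks $\textbf{C}_j$ with $j\leq b<k$, so none of them lies in $\textbf{C}_k$. Therefore every root vector supported in $\textbf{C}_k$ is non-excluded with respect to each such $\mathfrak u_i$, hence lies in each such $\mathfrak u_i$ and finally in their intersection $\mathfrak u'$.

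For part $(ii)$, I would read the hypothesis "there is no column neighbouring to $C_k$" as the statement that $C_k$ appears as neither member of any pair of neighbouring columns of $\mathscr D_\mathfrak m$. Every $\mathfrak u_i$ entering the full intersection $\mathfrak u_{\pi'}$ then comes from a pair contained entirely in $\{C_1,\ldots,C_{k-1}\}$, so the same factors enter $\mathfrak u'$; this forces $\mathfrak u_{\pi'}=\mathfrak u'$, and $(i)$ yields the desired conclusion.

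The main point where I expect to need care is the bookkeeping in \ref{2.7}, in particular recognising that the phrase ``last column of $\textbf{M}$'' there is explicitly identified with $\textbf{C}_{u+1}^s$, i.e. the rightmost column of the right member of the pair, and not with the literal last column of the ambient matrix. Once that identification is secure, the geometric content is exactly that no pair of neighbouring columns disjoint from $C_k$ can produce an excluded root in $\textbf{C}_k$, and both parts of the lemma drop out without any further computation.
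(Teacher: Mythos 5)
Your proposal is correct, but it takes a noticeably different route from the paper's own proof. The paper's argument is more primitive: it simply observes that, for any pair of neighbouring columns lying in $\{C_1,\ldots,C_{k-1}\}$, the column shift of \ref{2.4} does not touch $C_k$, so the entries of $C_k$ (being the largest and unmoved) continue to form the final segment of the word form of the associated $w_i$; then Lemma~\ref{2.5} immediately gives that no $\alpha_{i,j}$ with $j$ an entry of $C_k$ and $i$ an entry of an earlier column lies in $S(w_i^{-1})$, i.e. no such root is excluded. For $(ii)$ the paper observes that under the hypothesis no entry of $C_k$ is moved for any of the contributing pairs and invokes the same criterion. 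Your proof instead quotes the explicit tabulation of excluded roots carried out in \ref{2.7} and notes that it localises them to the column blocks $\textbf{C}_1,\ldots,\textbf{C}_{u+1}$ sandwiched between (and including) the two members of the given neighbouring pair, hence never to the right of the right member; you then reduce $(ii)$ to $(i)$ by remarking $\mathfrak u_{\pi'}=\mathfrak u'$ when $C_k$ has no neighbour. Both arguments are sound. The trade-off is that your approach leans on \ref{2.7} being an \emph{exhaustive} list (which the paper established by the same word-form reasoning, so in effect you reuse a downstream result rather than Lemma~\ref{2.5} itself), while the paper's argument is self-contained and a line shorter. You correctly identify the one real danger spot, namely reading ``last column of $\textbf{M}$'' in \ref{2.7} as the last column of the right member's column block $\textbf{C}_{u+1}$ rather than the literal last column of the ambient matrix; that reading is the intended one, and with it secured your argument goes through.
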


\begin {proof}  By the given numbering in $\mathscr T_{\mathscr m}$ the entries of $C_k$ exceed all those in previous columns.  Hence (i).  Under the hypothesis of (ii), no entries of $C_k$ are moved. Thus the assertion follows by Lemma \ref {2.5} and (i).
\end {proof}

\section{A possible recipe to construct a Weierstass section in general}\label{3}

In this section we return to the assumption that $P$ is a parabolic subgroup of a connected simple algebraic group acting on the Lie algebra $\mathfrak m$ of its nilradical.

Recall that $\dim V=g$ is the number of generators of $C[\mathfrak m]^{P'}$.  Set $d = \dim \mathfrak m -g$.

Throughout the remainder of this paper, given $e \in \mathscr N$, we simply say that $e$ is regular, if it is regular in $\mathscr N$ (See  \ref {1.5}).  In an ideal world\footnote{Tout est pour le mieux dans le meilleur des mondes possibles - Professor Pangloss \cite{A}.}, $\mathscr N$ is equidimensional of dimension $d$ and this will just mean that $P.e$ had dimension $d$. In fact we shall see that in type $A$ be combining \ref {6.9.4}, \ref {6.9.5} and \ref {6.9.8}, it follows  that the irreducible  component $\mathscr C$ of $\mathscr N$, containing $e$ constructed to be part of the Weierstrass section $e+V$ in \ref {5.4}, has in fact dimension $d$.  However it can happen \ref {6.10.7} that $P$ has no dense orbit in $\mathscr C$.

 Let $\mathscr N_{reg}$ denote the set of regular elements of $\mathscr N$.

\subsection{A Null Intersection Lemma}\label{3.1}

Suppose that $e+V$ is a Weierstrass section for the action $P'$ on $\mathfrak m$, such that $e \in \mathscr N$.  In this situation we obtain the following

\begin {lemma}

\

(i) $ P.(e+V) \cap \mathscr N= P.e$.

\

(ii)  $\mathfrak p.e \cap V =\{0\}$.

\

(iii) $\dim P.e = \dim \mathfrak p.e \leq \dim \mathfrak m -\dim V$.

\end {lemma}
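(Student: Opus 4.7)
The plan is to exploit the Weierstrass section hypothesis to reduce everything to a transparent statement about the slice $e+V$, then pass to tangent spaces at $e$ for (ii).

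Write $\mathbb C[\mathfrak m]^{P'} = \mathbb C[f_1, \ldots, f_g]$, the $f_i$ being the irreducible $P$-semi-invariant generators of positive $\mathfrak h$-weights $\chi_i$. By hypothesis the restriction map to $\mathbb C[e+V]$ is a $\mathbb C$-algebra isomorphism, so $\mathbb C[e+V] = \mathbb C[F_1, \ldots, F_g]$ with $F_i := f_i|_{e+V}$; equivalently $F := (F_1, \ldots, F_g) : e+V \iso \mathbb A^g$ is an isomorphism of affine varieties. Each $f_i$ has positive weight, so $f_i(0)=0$, and $e \in \mathscr N$ then forces $f_i(e)=0$; hence $F(e)=0$ and $(e+V)\cap \mathscr N = F^{-1}(0) = \{e\}$. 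For (i): if $y = p.(e+v) \in \mathscr N$ with $p \in P$ and $v \in V$, then the $P$-stability of $\mathscr N$ (clear from its description as the common zero set of the $P$-semi-invariants $f_i$) gives $p^{-1}.y = e+v \in \mathscr N$, forcing $v=0$ and $y=p.e \in P.e$; the reverse inclusion $P.e \subseteq P.(e+V)\cap \mathscr N$ is immediate.

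For (ii), because $F$ is an isomorphism of affine varieties sending $e$ to $0$, its differential at $e$ is a linear isomorphism $V \iso \mathbb C^g$, so the functionals $d_e f_i|_V$ form a basis of $V^*$. On the other hand, differentiating the semi-invariance relation $f_i(\exp(t\xi).x) = \chi_i(\exp(t\xi))\, f_i(x)$ at $t=0$, $x=e$ yields $d_e f_i(\xi.e) = d\chi_i(\xi)\cdot f_i(e) = 0$ for every $\xi \in \mathfrak p$. Any $v \in \mathfrak p.e \cap V$ therefore lies in the common kernel of the $d_e f_i|_V$ and so must vanish.

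Finally (iii): $\dim P.e = \dim \mathfrak p.e$ since $\mathfrak p.e$ is the tangent space to $P.e$ at $e$, and the inequality is immediate from (ii), because $\mathfrak p.e$ and $V$ are linear subspaces of $\mathfrak m$ with trivial intersection. The proof is essentially computation-free; the one thing to get right is the pairing in (ii) of the tangent-space isomorphism coming from the Weierstrass section hypothesis with the vanishing $d_e f_i(\mathfrak p.e) = 0$ coming from the $P$-semi-invariance of the $f_i$ combined with $f_i(e)=0$.
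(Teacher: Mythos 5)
Your proof is correct and takes essentially the same approach as the paper: both hinge on the surjectivity of the restriction map to produce (semi-)invariants whose restriction to $e+V$ is a prescribed linear functional, and on the $P$-stability of $\mathscr N$. Your treatment of (ii), via the differential of the slice map $F$ together with the semi-invariance relation, is a slightly more streamlined packaging of the paper's direct computation with $f(\exp(cx).e)$, but it encodes the same two facts (that $d_e f_i|_V$ span $V^*$ and that $d_e f_i$ vanish on $\mathfrak p.e$) and is not a genuinely different argument.
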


\begin {proof}  Given $v \in V$ non-zero, let $V'$ be a complement to $\mathbb Cv$ in $V$ and choose $\xi \in V^*$ to vanish on $V'$ and to be non-zero on $v$.

By the surjectivity in the definition of a Weierstrass section, there exists a homogeneous $P'$ invariant polynomial $f$ of positive degree whose restriction to $e+V$ is $\xi$.

Since $\mathscr N$ is $P$ stable, to prove (i) it is enough to show that $e+v \in \mathscr N : v \in V$ implies $v=0$.  If $v \neq 0$, define $f$ as above. Then $0=f(e+v)=\xi(v) \neq 0$, which is a contradiction. Hence (i).

If (ii) fails, there exist $v \in V$ non-zero and $x \in \mathfrak p$ such that $x.e=v$.  Choose $f$ as above.  Then for all $c \in \mathbb C$, one has $f(e +cv)=\xi(cv)=c\xi(v)$.

On the other hand $\exp cx \in P$, so $\exp cx.e \in \mathscr N$. Hence $0=f(\exp cx).e)=f(e+cv) +c^2\mathfrak m[c])$. Consequently
$$0=(\partial f(\exp cx)/\partial c)_{c=0}=(\partial f(e+cv)/\partial c)_{c=0}= \xi(v)\neq 0,$$
which is a contradiction.  Hence (ii).

The equality in (iii) is standard, the inequality follows from (ii).

\end {proof}

\subsection{An Adapted Pair from Regularity}\label{3.2}

Let $e+V$ be a Weierstrass section as in \ref {3.1}.

%
%The result noted at the end of Section \ref {2.8} makes it plausible that $\mathscr N$ is equidimensional of dimension $d$.
%
%
%Since $P$ is a connected group, the components of $\mathscr N$ are $P$ stable.  If $\mathscr N$ is equidimensional of dimension $d$, then $\dim P.e \leq d$, for all $e \in \mathscr N$. It is plausible that this holds without this assumption.  In any case by \ref {3.1}(iii), it follows in the case that $e$ can be completed to a Weierstrass section.
%
%
%
%\subsection{An Eigenvalue Criterion for Regularity}\label{3.3}

Let $\mathscr C$ be an irreducible component of $\mathscr N$.  Suppose that $\mathscr C$ has dimension $d$.

%The advantage of have a dense $P$ orbit in $\mathscr C$ is that it makes the choice of a Weierstrass section $e+V$ ``canonical''.  Indeed $e$ is simply chosen to be a representative in that orbit.

\begin {lemma}  There exists $h \in \mathfrak h$ such that $h.e=-e$, that is $(e,h)$ is an adapted pair.
\end {lemma}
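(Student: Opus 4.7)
The goal reduces to a linear problem on $\mathfrak h$: writing $e=\sum_k x_{\beta_k}$ as the sum of root vectors supplied by the Weierstrass construction (Section \ref{1.6}), the equation $h.e=-e$ is just the system of scalar equations $\beta_k(h)=-1$, one for each $k$. My plan is to first pin down the direct sum decomposition $\mathfrak m=\mathfrak p.e\oplus V$, then use it together with the weight structure of the invariants to force $e\in\mathfrak p.e$, and finally upgrade this to $e\in\mathfrak h.e$ using the combinatorics of the roots $\beta_k$ built into the construction.

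For the first step, the adapted-pair language of the conclusion tacitly requires $e$ to be regular in the sense of Section \ref{1.5}, so $\overline{P.e}=\mathscr C$ and hence $\dim \mathfrak p.e=\dim \mathscr C=d$. Combined with Lemma \ref{3.1}(ii)--(iii) this yields
$$\mathfrak m=\mathfrak p.e\oplus V,\quad \dim \mathfrak p.e=d,\ \dim V=g.$$

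For the second step, each generator $f_i$ of $\mathbb C[\mathfrak m]^{P'}$ is an $\mathfrak h$-weight vector of some weight $\mu_i$, homogeneous of degree $d_i$, vanishing at $e\in\mathscr N$. The $P'$-invariance gives $\nabla f_i(e)\cdot(x.e)=0$ for $x\in\mathfrak p'$, while the weight identity $(h.f_i)(e)=\mu_i(h)f_i(e)=0$ for $h\in\mathfrak h$ gives $\nabla f_i(e)\cdot(h.e)=0$. Since $\mathfrak p'+\mathfrak h=\mathfrak p$, the gradient $\nabla f_i(e)$ annihilates $\mathfrak p.e$, i.e.\ lies in $(\mathfrak p.e)^\perp\subset \mathfrak m^*$. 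The Weierstrass construction of Section \ref{5.4} makes $f_i|_{e+V}$ the linear coordinate dual to a chosen root vector $v_i\in V$, so the $g$ gradients $\nabla f_i(e)$ are linearly independent and form a basis of the $g$-dimensional annihilator. Euler's identity $\nabla f_i(e)\cdot e=d_if_i(e)=0$ then places $e$ in their common kernel, yielding $e\in\mathfrak p.e$.

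The final step is the crux. Passing from $e\in\mathfrak p.e$ to $e\in\mathfrak h.e$, equivalently to the solvability of $\beta_k(h)=-1$, amounts to showing that the roots $\beta_k$ are affinely independent in $\mathfrak h^*$. This does not follow from the weight-theoretic argument above; it is a property of the specific $e$ produced in Section \ref{5.4}. Concretely, Condition $(P_1)$ of Section \ref{5.2} arranges the composite lines making up $e$ to pass through pairwise disjoint boxes, so the subgraph of $[1,n]$ whose directed edges are the pairs $(i_k,j_k)$ with $\beta_k=\varepsilon_{i_k}-\varepsilon_{j_k}$ is a disjoint union of directed paths and hence a forest. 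Consequently the $\beta_k$ are linearly independent, the system $\beta_k(h)=-1$ has a solution $h\in\mathfrak h$, and for any such $h$ we get $h.e=\sum_k\beta_k(h)x_{\beta_k}=-e$. The principal obstacle is precisely this last combinatorial point; it is the content of the maximal-gating construction of Section \ref{5.4} and must be checked uniformly across all pairs of neighbouring columns of all heights.
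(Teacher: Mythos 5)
Your proposal takes a genuinely different route from the paper, and it does not quite reach the statement as the paper intends it.

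The paper's own argument is short and purely conical: $\mathscr N$ is a cone, hence so is its component $\mathscr C$; regularity means $P.e$ is dense in $\mathscr C$, and for any nonzero scalar $c$ the orbit $P.ce$ is also dense, so $P.ce=P.e$ and $ce\in P.e$. Because $P$ is algebraic this produces a one-dimensional torus $T\subset P$ with $T.e=\mathbb C^\ast e$, from whose Lie algebra one extracts a semisimple $h$ with $h.e=-e$; a final (simultaneous) conjugation puts $h$ into $\mathfrak h$. Nothing about the internal structure of $e$ is used, which is essential since the lemma sits in Section \ref{3}, where $G$ is of arbitrary type and $e+V$ is any Weierstrass section with $e\in\mathscr N$, not necessarily the one built in Section \ref{5.4}.

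Your steps (1) and (2) are correct as stated: the direct sum $\mathfrak m=\mathfrak p.e\oplus V$, the observation that the gradients $\nabla f_i(e)$ span $(\mathfrak p.e)^\perp$, and the Euler-identity conclusion $e\in\mathfrak p.e$ are all sound and somewhat interesting. But they are a detour; nothing in step (3) uses $e\in\mathfrak p.e$, and step (3) alone is what would (if valid) give the result. The genuine gap is in step (3): you assume $e$ is a sum of root vectors whose roots are linearly independent. That is a feature of the explicit $e$ built in Section \ref{5.4} (it is Lemma \ref{6.2} there, proved from Lemma \ref{5.4.8}(vi)), but it is not part of the hypotheses of the lemma in Section \ref{3.2}. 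For a general Weierstrass section in general type there is no reason $e$ should be a sum of root vectors at all, let alone with affinely independent roots, so the linear system $\beta_k(h)=-1$ need not even be defined. You flag this yourself at the end, which is honest, but as written the argument proves a weaker statement than the lemma: it establishes the conclusion for the specific type-$A$ element of Section \ref{5.4} rather than for the general regular $e$ the lemma assumes. The paper's cone-plus-dense-orbit argument is both shorter and strictly more general, and it also makes transparent that the $h\in\mathfrak h$ of the conclusion is obtained after conjugating the pair $(e,h)$, a subtlety your linear-system formulation would obscure even where it applies.
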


\begin {proof} Through its definition $\mathscr N$ is a cone. Since  $\mathscr C$ is an irreducible component of $\mathscr N$, it is also  a cone. On the other hand (by hypothesis) it admits a dense $P$ orbit $P.e$. Yet for all $c \in \mathbb C$ non-zero, $P.ce$ is also a dense $P$ orbit in $\mathscr C$, so the two must coincide. In particular $ce \in P.e$. Yet $P$ is an algebraic group so the last relation implies that it admits a one-dimensional torus $T$ such that $ce \in T.e$ and so $T.e=(\mathbb C\setminus \{0\})e$.
Consequently one can find a semisimple element $h$ in the Lie algebra of $T$ such that $h.e=-e$.   Up to (simultaneous) conjugation of $h,e$ one can take $h \in \mathfrak h$.

\end {proof}

\subsection{An Eigenvalue Criterion for Surjectivity}\label{3.3}

Suppose $e \in \mathscr N$ with $(e,h)$ an adapted pair,

Then one can choose $V$ to be an $h$ stable complement to $\mathfrak p.e$ in $\mathfrak m$.  As noted for example in \cite [5.2.5]{FJ} a standard argument shows that $P.(e+V)$ is dense in $\mathfrak m$.  Consequently the homomorphism $\varphi$ from the invariant algebra $\mathbb C[\mathfrak m]^{P'}$ to $\mathbb C[e+V]$ defined by restriction of functions is injective.

% of $\mathbb C[\mathfrak m]^{P'}$ into $\mathbb C[e+V]$.

As in say \cite [2.2.4] {FMJ}, a simple trick allows one to show that $V$ can be chosen to have a basis of root vectors (in $\mathfrak m$) with linearly independent roots.

More is needed (and necessary) to obtain surjectivity. We can choose the generators $\{p_i\}_{i=1}^g$ to be homogeneous of degree say $d_i$ with $h$ eigenvalue $\lambda_{p_i}$. Then its image $v_i=\varphi(p_i)$  has $h$ eigenvalue $\lambda_{v_i}$ with
$$ \lambda_{v_i} - \lambda_{p_i} = d_i-1, \forall i \in [1,g].$$

Of course the $v_i$ are polynomials in the basis elements $\{u_i\}_{i=1}^g$ which we can assume to be $h$ eigenvectors of eigenvalue $\lambda_{u_i}$.  Given that the latter are non-negative then surjectivity will follow from the sum rule
$$ \sum_{i=1}^g (\lambda_{u_i} - \lambda_{p_i} - (d_i-1))=0.$$

Unlike the coadjoint case \cite [Sect. 6]{JS}, this does not appear to hold automatically.

\subsection{}\label{3.4}

We shall see in \ref {5.4} that $e$ is a sum of root vectors whose roots are linearly independent.   However in contrast to $V$ (see \ref {3.3}) there is no easy way to make this so, or to be more precise there is no way, easy or otherwise, to make $e \in \mathscr N_{reg}$, a sum of root vectors with linearly independent roots.  This does hold for coadjoint action of a simple group $G$ on $\mathfrak g^*$; but already the proof is highly involved using the classification of distinguished nilpotent elements \cite {CE}.

That we can choose $V$ as in the last part of \ref {3.3} to obtain surjectivity is obtained here in type $A$ and for our special choice of $e$, by hard combinatorics which may turn the stomachs of some readers\footnote {Unlike Shakespeare, Henry V, Act II, Prologue
- We’ll not offend one stomach with our play.}.
\section{Attempts at Regularity in type $A$.}\label{4}

Here we return to the assumption that $P$ is a parabolic subgroup of $SL(n)$.

\subsection{The Mechanics of Gating}\label{4.1}

In \cite [Sect. 4]{FJ} we showed that the action of $P'$ on $\mathfrak m$ admits a Weierstrass section $e+V$.

Very briefly our construction, introduced in \cite [4.2]{FJ}, runs as follows.   As noted in \ref {2.3}, a line joining boxes (specifically with labels $i,j$)  defines a root vector (specifically $x_{i,j}$) in $\mathfrak m$.  It is denoted by $\ell_{i,j}$, or simply by $(i,j)$.

In the above $\ell_{i,j}$ is given either a label $1$ or a label $*$.  (The use of $\ast$ instead of a $0$, was  recommended to us by Ilya Zakharevich, during our lecture.)

%\
%
%\textbf{N.B.}   We had used a $0$ instead of a $\ast$ - see \cite [4.2]{FJ}. The present and perhaps better notation was  recommended to us by Ilya Zakharevich, during our lecture.)

\

Then $e$ is defined to be the sum of the root vectors given by the lines labelled by $1$ and $V$ the direct sum of the root subspaces given by the lines labelled by $\ast$.

In \cite [Thm. 6.2]{FJ}, we found a labelling to make $e+V$ is a Weierstrass section.

It was further found \cite [5.4.2]{FJ} that $e \in \mathscr N$. \textit{Yet it is not always the case that $e$ is regular.}  For example suppose that $\mathfrak p$ is defined by blocks of sizes which form the sequence $(1,2,2,1)$, then our recipe gives $V=\mathbb Cx_{4,6}+\mathbb Cx_{3,5}$ and $e=x_{1,2}+x_{2,4}$, which is not regular. A natural modification would be to ``gate'' the lines carrying $*$ in the sense of \cite [4.4]{FJ}.  This process which can be considered as removing these lines, and then to join boxes with lines carrying a $1$ which are ``freed'', by this gating. Here the most natural solution is to add $x_{3,6}$ to the expression for $e$ above.

Actually this still does not make $e$ regular.  Yet here the box carrying $5$ has no right going line, so we can instead adjoin $x_{5,6}$ which does make $e$ regular. The matter becomes even more delicate when the box containing $5$ already has a right going line due to the presence of a column of height $3$ to the right as in the case of the parabolic defined by the block sizes $(3,1,2,2,1,3)$ which seemed at first to show that we cannot choose $e$ to be regular.  In fact this example does admit a Weierstrass section $e+V$ with $e \in \mathscr N_{reg}$ by taking $V=\mathbb Cx_{7,9}+\mathbb Cx_{6,8}+\mathbb Cx_{6,12}$ and $e=x_{1,4}+ x_{4,5}+ x_{5,7}+ x_{9,10}+ x_{2,6}+ x_{8,9}+x_{3,8}+x_{7,11}$.

% The reader may wish to ponder on as to how this solution was arrived at.

\subsection{Maximal Gating}\label{4.2}

This last and rather simple example should convince the reader that finding a Weierstrass section $e+V$ with $e \in \mathscr N_{reg}$, is a rather delicate business.

The first thing to recognize is that non-regularity results from having at the end of our procedure more than one ungated $*$. The simplest case of the Borel itself when the block sizes are all one. In this case we could have chosen $V$ to be the sum of the simple root vectors and $e=0$ which is obviously not regular (except for $\mathfrak {sl}(2)$).  An appropriate choice of $e$ in this case is obtained by joining alternate boxes with a line carrying an $*$, giving $e = \sum_{i=1}^{n-2}x_{i,i+2}$.  However in general regularity may still not result as in the second of the examples in \ref {4.1}.

We must at least modify ``step $3$'' of \cite [4.4]{FJ} so that at each induction stage, there is at most one ungated $*$. We call this a ``maximal gating'' rather than the ``minimal gating'' used in \cite [Sect. 4]{FJ}.

Notably it is more natural.  In particular it behaves well with respect to adjoining columns on the right (Lemma \ref {5.4.9}), but not so well for adjoining columns on the left (Lemma \ref {5.4.10}).

Again recall the notation of \ref {2.8}. When $e$ is given by a maximal gating, we show that it belongs to $\mathfrak u_{\pi'}$ and use this fact to show that
$\overline{B.\mathfrak u_{\pi'}}$ is an irreducible component of $\mathscr N$ of dimension $\dim \mathfrak m -g$.  It is quite remarkable that this can be achieved.

\subsection{Possible Failure of Regularity}\label{4.3}

It turns out that even with a maximal gating, the resulting element $e$ need not be regular. Again the augmented element $e_{VS}$ described in {1.7} need not be regular.

Indeed recall the result noted in last paragraph of \ref {4.2}. We use this in \ref  {6.10.7} to establish the second main result of this paper  (\ref {6.10.7}), namely  that the irreducible component $\overline{B.\mathfrak u_{\pi'}}$ of $\mathscr N$ containing $e$ need not admit a dense $P$ orbit.

\section{Modifying Step $3$ of \cite [4.4] {FJ}}\label{5}

\subsection {Minimal versus Maximal Gating}\label {5.1}

Here we shall modify the construction of \cite [4.4]{FJ} in which a ``minimal'' gating was used, by taking a ``maximal'' gating.  Otherwise the construction is similar to \cite [4.4]{FJ}.  Yet the details and the result may look \textit{dramatically} different as noted in the example below.  The comparison is made in Figures $3,4,5$.

Consider the parabolic defined by the sequence $(3,1,2,2,1,3)$, with $s=2$.

After step $2$, the lines $\ell_{7,9},\ell_{6.8},\ell_{3,12}$ carry a $\ast$.

 In the minimal gating, $\ell_{7,9}$ is \textit{not} gated at stage $2$.  In particular the lines in $R_i;i=1,2$ are not changed from step $2$.  Then in stage $3$ both the lines carrying a $*$ in $R^2$ are gated.  Finally $3,8$ is joined by a line carrying a $1$ and $7,12$ by a line carrying a $*$ following the first part of the third from bottom paragraph of \cite [4.4.10]{FJ} and $6,9$ are joined by a line carrying a $1$ following the second part of the cited paragraph.

 In the maximal gating the line $\ell_{7,9}$ \textit{is} gated at stage $2$ and the pairs $8,9$ and $7,11$ are joined by a line following a procedure similar to the first part of the third from bottom paragraph of \cite [4.4.10]{FJ} except that now \textit{both} lines carry a $1$.  Then at stage $3$ \textit{only} the line $\ell_{6,8}$ is gated (since $\ell_{7,9}$ has already been gated).  Finally $3,8$ is joined by a line carrying a $1$ and $6,12$ by a line carrying an $*$, similar to the first part of the third from bottom paragraph of \cite [4.4.10]{FJ}.

However in the maximal gating, we do gate $\ell_{6,8}$ and then in the second stage $\ell_{8,11}$ is replaced by  $\ell_{7,11}$ and $\ell_{8,9}$, as above.  However in contrast \textit{both of these lines are made to carry a $1$}.

 Both our constructions lead to a Weierstrass section $e+V$, but the second is more natural and for example behaves well on removal of the last column (Lemma \ref {5.4.8}) leading eventually to a more explicit description of the Weierstrass section, though details will be postponed to a subsequent paper.

 Unlike the minimal gating, in the maximal gating at stage $s+1$, all lines carrying a $\ast$ in $R^{s-1}$ have already been gated.

 \subsection {Two Key Conditions on Composite Lines}\label {5.2}

 Following \cite [4.2]{FJ} we say that two boxes in the tableau $\mathscr T_\mathfrak m$ are adjacent if they lie on the same row with no boxes strictly between them.  The horizontal line joining adjacent boxes defines a certain root vector in $\mathfrak m$. Eventually we consider not necessarily horizontal lines, not necessarily adjoining adjacent boxes.  However only boxes in different columns will be joined. These lines still correspond to further root vectors in $\mathfrak m$.  We may consider that such a line has an arrow pointing from left to right.

 Following \cite [4.2.3]{FJ} we define a composite line to be a concatenation of the lines described above with arrows on successive lines all pointing from left to right.  A composite line defines an element of $S(\mathfrak m)$ being the product of the elements defined by the lines.  Two composite lines are said to be disjoint if they pass through a different set of boxes.

 %This just means that the resulting elements in $S(\mathfrak m)$ have no common linear factors.

 The aim of the present section is to ensure that the following two properties introduced in  \cite [4.2.3]{FJ} are satisfied.

 \

 $(P_1)$. For all $s \in \mathbb N^+$, all the boxes  in $R^s$ between any two columns $C,C'$ of height $s$ lie on a unique disjoint union of composite lines.

 \

   For $\ell_{i,j}$ occurring in the above, let $I$ (resp. $J$) be the set of left (resp. right) hand entries.  Then, apart from diagonal entries, the  product $\prod x_{i,j}:i \in I, j\in J$ defines a unique monomial in the development of the Benlolo-Sanderson minor and in addition makes the only non-zero contribution to this minor.  (A proof with more precise details is given in \cite [Lemma 4.2.5]{FJ}.)

 \

$(P_2)$. Of the lines which constitute the composite lines in the disjoint union joining the boxes in neighbouring columns $C,C'$ of height $s$ given by $(P_1)$, all are labelled by $1$ except for exactly one which is labelled by $*$.

\

This means that the evaluation of the said monomial is a co-ordinate vector in $\mathfrak m$.  The space spanned by these co-ordinate vectors is $V$, by definition of $V$.  (Again more precise details can be found in \cite [Lemma 4.2.5]{FJ}.)

%Concerning the above two points the perplexed reader may obtain further guidance  from \cite [4.2.5]{FJ}.

 \subsection {The Steps}\label {5.3}

 As in \cite [4.2,4.4]{FJ} our construction is carried out in three steps.  Only the last step differs from \cite [4.4]{FJ}.

At the first step \cite [4.2.1] {FJ}
all the horizontal lines on adjacent boxes in a given row are joined. The sum $\tilde{e}$ of the co-ordinates defined by these lines is after Ringel et al \cite {BHRR} a Richardson element, that is $\overline {P.\tilde{e}}=\mathfrak m$.  This indicates that we are on the right track.

At the second step \cite [4.2.2]{FJ} the lines are labelled by either $1$ or $*$, using the rightmost labelling defined there.  We recall that this means a horizontal left going line to a column $C$ of height $s$ in row $R_s$ is labelled by $*$ exactly when $C$ has a left neighbour.

Of course in the second step one might use an intermediate labelling in which just one of the lines in the composite line in row $R_s$ joining neighbouring columns of height $s$, is labelled by $\ast$.  Possibly this also gives rise to a Weierstrass section though we did not check that it does or does not. \textit{However a rightmost labelling is crucial to the truth of Proposition \ref {6.9.2} and Corollary \ref {6.9.3} as even the simplest examples show, viz the composition $(1,2,2,1)$.}

  One easily sees that the total number of lines labelled by $*$ is the total number of pairs of neighbouring columns. Let $e$ (resp. $V$) denote the sum of the co-ordinate vectors (resp. subspaces) defined by the $1$'s (resp. $\ast$).  Through the result of Ringel et al noted in the previous paragraph, we obtain (see \cite [Prop. 5.1]{FJ}) that $\overline{P'.(e+V)}= \mathfrak m$. Thus the restriction map is injective but generally fails to be surjective.

Through the first step any two boxes on the same row are joined by a unique composite line which is horizontal.  Moreover all composite lines are horizontal.  Thus property $(P_1)$ holds, but property $(P_2)$ may fail, because joining a pair of neighbouring columns there may be two or more composite lines carrying an $*$.  This occurs for the composition $(2,1,1,2)$ as illustrated by the central figure in \cite [4.2.4]{FJ}. Just having $(P_1)$ means that the restriction map is surjective at the level of fraction fields \cite [Lemma 4.2.5(ii)]{FJ}.

In a similar fashion to \cite [4.4]{FJ} we recover property $(P_2)$ without losing property $(P_1)$ by an inductive procedure on ``stages''.  This is exemplified by the right hand figure in \cite [4.2.4]{FJ}.

 \subsection {The Stages}\label {5.4}

Step $3$ is carried in stages as indicated below.  Stage $0$ is the end of step $2$. Take $i  \in \mathbb N^+$ and consider we are performing the $i^{th}$ stage and recall the notation of \ref {2.2}.

 First the lines lying in $R^{i-1}$ are left unchanged except that some carrying a $*$ may be gated and some lines in $R_i$ (which are all horizontal and join adjacent boxes) are replaced by new lines in $R^i$ carrying either a $1$ or a $*$. These new lines need \textit{not} be horizontal and \textit{need not} join adjacent boxes.

In \cite [Sect. 4]{FJ}, we gated lines in $R^{i-1}$ carrying an $*$ in a minimal fashion (that is just enough to recover $(P_1)$).  Now we gate \textit{all} lines in $R^{i-1}$ carrying an $*$.  This is gating in a maximal fashion.

Apart from using this maximal gating and the labelling of the new lines in $R^i$, the construction, which is canonical, follows essentially the procedure of \cite [4.4]{FJ}, so in principle we could be brief!

Yet the construction is quite complex and has, as we shall see, some astonishing properties. Therefore we write out full details which in any case will be rather different to those in \cite [Sect. 4]{FJ}.

Following the notation of \cite [4.4]{FJ}, given two boxes $b,b'$, not necessarily in the same row, we write $b<b'$ (resp. $b\leq b'$) if $b$ lies in a column strictly to the left (resp. to the left) of $b'$. Note that equality does not mean that the boxes are the same, merely that they are on the same column.  The strictly inequality means that $b$ can be connected to $b'$ by a line which may form part of a composite line.

\

\textbf{N.B.}  Throughout a gated or ungated line will always refer to a line carrying a $*$.

\subsubsection{Extremal Boxes}\label{5.4.1}

Call a box left (resp. right) extremal if it has no left (resp. right) going lines.  The left (resp. right) extremal boxes at step $2$ are exactly those which have no adjacent box to the left (resp. right) in $\mathscr T$ and in the same row.

By contrast with \cite [4.4]{FJ} our  construction may add a left (resp. right) going line to a box which has no left (resp. right) going line.

A single box $b$ isolated in its row is viewed as a line.

\subsubsection{An Intermediate Condition on Composite Lines}\label{5.4.2}

%The first stage is the end of step two of \ref {5.3}. Recall that in this all lines are horizontal, carry a $1$ or a $\ast$ and join the adjacent boxes in the same row.

In the $i-1$ stage the lines and their labelling in $R_j:j\geq i$ are unchanged from those given in step $2$.  However those in $R^{i-1}$  are changed and we show that the following modification of $(P_1)$ holds.

\

$(P_1^{i-1})$.  There is a unique disjoint union of $(i-1)$ composite lines on which all the boxes in $R^{i-1}$ lie.

\

We stress that in this the boxes in $R^{i-1}$\textit{ may} lie on other composite lines, but there is only one disjoint union on which they all lie.

\subsubsection{Up-going linkage, Non-Overlapping and Adjacency}\label{5.4.3}

We shall show (Lemma \ref {5.4.8}(iv)) that the lines in $R^{i-1}$ \textit{not} gated (which we also refer to as ungated),
join $u$ pairs of boxes $b_j,b_j':j=1,2,\ldots,u:b_j\in R^{i-1}, b_j' \in R_{i-1}$ with $b_j<b_j'\leq b_{j+1}$.

Here the first property is non-trivial.  It means that the ungated lines link $R^{i-1}$ to $R_{i-1}$.  It is called up-going linkage to $R_{i-1}$ at the $i^{th}$ stage, or simply up-going.

The inequality which is also non-trivial, is called the strict non over-lapping of the ungated lines at the $(i-1)^{th}$ stage, or non-overlapping.

Finally we show for all $j \in [1,u]$, there is no column of height $\geq i$ between $b_j,b_j'$.  It allows us to speak of ungated lines in $R^{i-1}$ being between columns of height $\geq i$ (and not cut by such columns). It is called the adjacency property at the $(i-1)^{th}$ stage, or adjacency, for short.

These properties are all immediate at step $2$ since all lines are horizontal and join adjacent boxes.

\subsubsection {Ungated lines at the $(i-1)^{th}$ stage} \label{5.4.4}

If $u=0$, in \ref {5.4,3}, that is to say there are no ungated lines in $R^{i-1}$, then $R_i$ (which consists of a single composite horizontal line) is adjoined to $R^{i-1}$. Then since $(P_1^{i-1})$ holds, so does $(P_1^i)$.

Otherwise in the sense of \ref {5.4.3}, the lines between the pairs $b_j,b_j':j=1,2,\ldots,u:b_j, b_j' \in R^{i-1}$ carry a $\ast$ and are ungated.

By the strict non over-lapping of ungated lines at the $(i-1)^{th}$ step, as defined in \ref {5.4.3}, we have
 $$b_1<b_1'\leq b_2<b_2'\leq \ldots b_{u-1}<b'_{u-1}\leq b_u <b'_u. \eqno {(1)}$$

\subsubsection{The Labelling of Columns}\label{5.4.5}

We shall label the set $\mathscr C$ columns of height $\geq i$ following the particular manner resulting from \cite [4.4.10, Operation $3$]{FJ} except that we shall present this in just one operation.  In this \textit{some columns are unlabelled (and then ignored)} and some columns can have two different but consecutive labels  (for example as in the second sentence in the paragraph below).  Thus, as we have already forewarned the reader (\ref {2.2}), the labelling is different to that in either \ref {2.2} or that in \ref {2.7}.

Let $C_0$ be the leftmost element of $\mathscr C$.  Let $C_1$ be the rightmost element of $\mathscr C$ (possibly $C_0$ itself) such that between $C_0,C_1$ there are no ungated lines in $R^{i-1}$.  Then either there are no elements in $\mathscr C$ to the right of $C_1$ and we are done or there is a leftmost one $C_2$ such that between $C_1,C_2$ there are at least some ungated lines in $R^{i-1}$.  Moreover by choice of $C_1,C_2$ there are no elements of $\mathscr C$ between $C_1,C_2$.  In particular since $R_i$ is so far unchanged from its presentation in step $2$, there is a single line joining the boxes $C_1\cap R_i, C_2\cap R_i$ which is to be  deleted (see \ref {5.4.6}).

Repeating this process, we obtain columns $C_j:j=0,1,\ldots, 2n+1$ in $\mathscr C$ with the following properties.

\

$A$. If $j$ is even one can have $C_j=C_{j+1}$ or being distinct there are no ungated lines in $R^{i-1}$ between them.   In addition there \textit{can} be elements of $\mathscr C$ between $C_j,C_{j+1}$.

\

$B$.  If $j$ is odd, the columns $C_j,C_{j+1}$ are distinct with no elements in $\mathscr C$ between them; but at least one ungated line in $R^{i-1}$ between them.

\

In addition $C_0$ (resp. $C_{2n+1}$) is the leftmost (resp. rightmost) element of $\mathscr C$, with possibly $C_{2n}=C_{2n+1}$.

In the above the reader might view \cite [Figure 1]{FJ}. In this case $C_6=C_7$, but this is not indicated in that figure.

\subsubsection{Deletion of Lines}\label{5.4.6}

Now recall the boxes in $b_j, b_j' \in R^{i-1}:j\in [1,u]$ appearing in equation $1$ of \ref {5.4.4}.  Recall that $b_j,b_j': j \in [1,u]$ is joined by a line carrying a $\ast$ and (for the moment) is ungated.

Label the columns $C_j \in \mathscr C:j \in [1,2n+1]$ as above. Let $b''_j$ denote the unique box in $C_j\cap R_i$.

Gate the lines between the pairs $b_j, b_j' \in R^{i-1}:j=1,2,\ldots,u$ carrying a $\ast$.  The meaning of this gating is that we may
regard $b_j$ (resp. $b_j'$) as having no right (resp. left) going line.  This will lead to a redirection of the composite lines as we go to the next stage.

Now \textit{delete} the lines in $R_i$ joining $b''_j, b''_{j+1}$ for $j$ \textit{odd}.
Then for $j$ odd (resp. even), $b''_j$  has no right (resp. left) going line.

Notice that this does \textit{not} delete all the lines in $R_i$.  In particular this is true if $j$ is even and $b_j''\neq b''_{j+1}$ and also because some columns which meet $R_i$ are unlabelled.  Some of these lines may carry a $\ast$ by virtue of the second step (\ref {5.3}).

Note one can have $b''_{2j}=b''_{2j+1}:j \in [0,n]$.  Yet this is compatible with not deleting the line between them.

Notice in addition that $b''_0$ (resp. $b''_{2n+1}$) has no left (resp. right) going line.

 \subsubsection{Rejoining Boxes}\label{5.4.7}

 We now join boxes in $R^i$ by lines.  This joining is rather natural and results from boxes not have left or right going lines except those which are gated.  The rule can viewed as being specified by a ``minimal distance criterion'' described at the end of this section and having said that the reader may already guess this rule.

 \

 If there exists $j \in [1,u]$ such that $b_j<b_0''$, let  $u_0\in [1,u]$ be maximal such that $b_{u_0}<b_0''$ and join $b_{u_0}<b_0''$ by a line.

If there exists $j \in [1,u]$ such that $b'_j>b_{2n+1}''$, let  $u_{2n+1}\in [1,u]$ be minimal such that $b'_{u_{2n+1}}>b_{2n+1}''$ and join $b'_{u_{2n+1}}>b_{2n+1}''$ by a line.

This is complimented in the natural way  by:

 % Now take $j\in \{2,3,\ldots,2n-1\}$.

  For $j\in [2,2n]$ even, let $u_j\in [1,u]$ be maximal such that $b_{u_j}<b''_j$.

  For $j\in [1,2n-1]$ odd, let $u'_j\in [1,u]$ be minimal such that $b''_j<b'_{u'_j}$.

  Since there are no ungated lines between $C_{j},C_{j+1}$, for $j$ even, one has $u'_{j+1}=u_j+1$.  Extending the first two rules in the beginning of this paragraph, we impose:

  \

 (i).  Join $(b_{u_j},b''_j)$, for $j$ even  and join $(b''_j,b'_{u'_j})$, for $j$ odd, by a line.

% We defer till the next section which of these new lines carries a $1$ or a $\ast$. However we already note that the line joining $b_j,b_j'$ carries a $\ast$ because this already held at step $2$ or it replaces a line given by step $2$ carrying a $\ast$ having $b'_j$ as its right hand end.

 \

 Finally ``to gather loose ends'' we perform \textit{after} (i):

 \

  (ii). Join $b_j,b'_{j+1}$ by a line if $j \in [1,u]\setminus \{u_j\}_{j=1}^{2n}$.

  \

  The above rules satisfy the following

  \

  \textbf{Minimal Distance Criterion}.  The pair $b,b'$ of  boxes to be joined in (i) by a right going line from $b$ to $b'$ is given in terms of the distance between the columns in which they are contained. Thus starting from  $b'$ having no left going line which is not gated, then $b$ is chosen to be of minimal distance from $b'$ with no right going line which is not gated, or  starting from  $b$ having no right going line with is not gated, then $b'$ is chosen to be of minimal distance from $b$ with no left going line which is not gated.  This is essentially true for (ii).  Thus $b_j$ is of minimal distance to $b'_{j'}$ for $j<j'$ exactly when $j'=j+1$, whilst $b_j,b'_j$ are already joined by a line, though carrying a $\ast$, so we desist joining them by a line carrying a $1$.

   \subsubsection{Labelling of the New Lines}\label{5.4.8}

   To recall: In the previous two sections we described in the $i^{th}$ stage exactly which lines may be gated or deleted and which new lines are created.  Recall (\ref {5.4.6}) that the deleted lines are all horizontal.

   We now describe the labels that we put on the new lines. We describe only those which are to carry a $\ast$, the remaining new lines are to carry a $1$.

   We emphasize that this will not exhaust all the lines meeting $R_i$ which carry an $*$, since they may belong to the subset which are not deleted.  For example suppose $C_{2n}\neq C_{2n+1}$.  If $C_{2n+1}$ has height $i$ and has a left neighbour in $\mathscr C$, then $b_{2n+1}$ is joined to $b_{2n}$ by a line which is not deleted and carries a $*$.

   \

   Suppose the deleted line $\ell_{b''_j,b''_{j+1}}$:$j$ odd, carries a $\ast$ (resp. $1$), then the line which replaces it, namely $\ell_{b_{u_{j+1}},b''_{j+1}}$ is assigned to carry a  $\ast$ (resp. $1$).    This assignment is a little arbitrary but \textit{essential} to what follows. (In particular for (ii) below and for Lemma \ref {6.5.2}.)  We juggled with alternative assignments and the reader is invited to do the same.) From it we may immediately draw eight conclusions.

    We say that a line is at level $j$ if its right end-point is in $R_j$.

   \begin {lemma}

   \

   $(i)$.  The number of lines in $R^j$ carrying a $\ast$ remains unchanged, as one goes through the stages, and so equals the number of neighbouring columns of height $\leq j$.

   \

   $(ii)$.  Two lines carrying a $\ast$ cannot share the same right hand end box.

  \

  $(iii)$.  A line $\ell$ carries a $\ast$ if and only if its right hand end is a box at the lowest point of a column $C$, that is in $R_i\cap C$, where $i= ht C$, and $C$ has a left neighbour of height $i$.  Viewed as going from left to right it also goes upwards.

  \

  $(iv)$.   The new ungated lines for the $(i+1)^{th}$ stage satisfy the up-going linkage property and the strict non-overlapping property.

  \

  $(v)$.  Between any two neighbouring columns $C,C'$ of height $i$, every box in $R^i$ has a right (resp. left) going line carrying a $1$ with the possible exception of those in $C'$ (resp. $C$) and the ungated line $\ell$ carrying a $\ast$ (noted in (iii)).

  \

  $(vi)$.  No two lines carrying a $1$ may either start nor end at the same box.

  \

  $(vii)$. The adjacency property of ungated lines holds at the $i^{th}$ stage.

  \

  $(viii)$.  Consider a box $b \in R^i$ with a right (resp. left) going line carrying a $1$.  Then there is no line carrying a $\ast$ from a row $R_{j}:j>i$ with left (resp. right) end point $b$.

   \end {lemma}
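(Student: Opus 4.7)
The plan is to establish all eight statements simultaneously by induction on the stage $i$. The base case $i = 0$ corresponds to the end of step $2$: every line is horizontal, joins adjacent boxes within a single row, and carries the rightmost label of \ref{5.3}. Under these conditions each part of (i)--(viii) reduces to a direct observation. Claim (iii) is literally the rightmost-labelling rule, claims (iv) and (vii) are vacuous at this stage, and (ii), (v), (vi), (viii) are immediate from the fact that each row is a single horizontal composite line of adjacent boxes.

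For the inductive step, recall the four operations at stage $i$: (a) gating every previously ungated $\ast$-line in $R^{i-1}$; (b) deleting the horizontal line $\ell_{b''_j, b''_{j+1}}$ in $R_i$ for each odd $j$; (c) inserting new lines via the minimal-distance rules of \ref{5.4.7}; (d) assigning labels via the inheritance rule of \ref{5.4.8}. Claim (i) will be an immediate count: each deleted horizontal $\ast$-line is replaced by exactly one new $\ast$-line whose right endpoint lies in $R_i$, while gating does not change labels; iterating from the base case, the number of $\ast$-lines in $R^j$ then equals the number of neighbouring pairs of columns of height $\leq j$. Claim (iii) is the conceptual heart: by the inductive hypothesis every pre-existing $\ast$-line already has the stated form, and each newly created $\ast$-line terminates at some $b''_{j+1}$; the inheritance rule (d) forces this to happen only when the deleted line $\ell_{b''_j, b''_{j+1}}$ itself carried a $\ast$ at step $2$, which by the rightmost labelling of \ref{5.3} forces $C_{j+1}$ to have height exactly $i$ and to possess a left neighbour of height $i$. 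The up-going direction is automatic because the left endpoint lies in $R^{i-1}$. Claim (ii) then follows since distinct odd $j$ yield distinct $C_{j+1}$, hence distinct bottom boxes. Claims (vi) and (viii) will be consequences of the minimal-distance criterion: two $1$-lines sharing an endpoint, or a higher-level $\ast$-line landing at a box that also terminates a stage-$i$ $1$-line, would each contradict the minimality used to choose one of the endpoints. Claim (v) is verified between any two height-$i$ neighbouring columns by combining \ref{5.4.7}(i) for the paired interior boxes, the loose-ends clause (ii) for the remainder, and the observation that the unique $\ast$-line of (iii) is precisely the exception allowed by (v).

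Claims (iv) and (vii), which supply exactly the data needed to rerun the induction at stage $i+1$, will be the main obstacle. Non-overlapping and up-going linkage should follow from the inherited ordering $(1)$ in \ref{5.4.4} together with the minimal-distance selection of the indices $u_j, u'_j$, and adjacency should follow from the fact that no column of $\mathscr C$ lies strictly between the labelled pair $C_j, C_{j+1}$ for $j$ odd, so a fortiori no column of height $>i$ does. The delicacy lies in the asymmetric even/odd treatment of the column labelling in \ref{5.4.5} and the interaction between the minimal-distance rule in \ref{5.4.7}(i) and the loose-ends clause (ii), which must be checked never to produce overlapping or cut ungated lines, and to cover the corner cases $b''_{2j}=b''_{2j+1}$ and the boundary columns $C_0, C_{2n+1}$. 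A secondary subtle point is the label-inheritance rule of \ref{5.4.8}, flagged in the text as slightly arbitrary but essential: the proof must confirm that this particular convention is what preserves (ii) inductively, since an alternative assignment would eventually force two $\ast$-lines to share a right endpoint.
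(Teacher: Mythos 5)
Your proposal is correct in spirit and takes essentially the same route as the paper, merely making explicit the induction over stages that the paper leaves implicit; the paper's own proof is equally terse, dispatching (iv)–(viii) with one-line appeals to the minimal-distance criterion, to \ref{5.4.7}(i),(ii), to the fact that the construction never attaches a new $1$-line to a box already carrying one, to the labelling of $\mathscr C$ together with adjacency at the previous stage, and to the fact that $1$-lines are never gated — and it does not prove (i)–(iii) at all, treating them as immediate from the inheritance rule of \ref{5.4.8}. The one place your sketch is thinner than it should be is in (vii): to conclude that a new $\ast$-line $\ell_{b_{u_{j+1}},b''_{j+1}}$ is adjacent you must first locate its left endpoint $b_{u_{j+1}}$ strictly between $C_j$ and $C_{j+1}$, and that step uses the adjacency property of the ungated lines at the $(i-1)^{th}$ stage together with rules $A$, $B$ of \ref{5.4.5} (so that no ungated line sits between $C_{j-1}$ and $C_j$ to compete for maximality of $u_{j+1}$); your phrase ``no column of $\mathscr C$ lies strictly between $C_j, C_{j+1}$'' addresses only the interval to the right of the left endpoint, not the placement of the left endpoint itself.
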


  \begin {proof}  The last part of (iv) follows by the minimal distance criterion of \ref {5.4.7}. Assertion (v) is immediate from (i),(ii) of \ref {5.4.7}.  For (vi), note that by our construction a left (resp. right) going line carrying a $1$ cannot be drawn from a given box already possessing a left (resp. right) going line which does not carry a $\ast$.  For (vii) we note that by \ref {5.4.8}, the only new lines carrying a $\ast$ are obtained by the construction of \ref {5.4.7}(i).  Then the assertion is immediate from the way in which columns of height $\geq i$ are labelled and the adjacency property of ungated lines at the $(i-1)^{th}$ stage.  $(viii)$ obtains because a line carrying a $1$ is not gated.

  \end {proof}

  \textbf{Remarks.}  By contrast to (ii) several lines carrying a $\ast$ may share the same left hand end box (see \cite [Fig. $3$]{FJ}).

  \

 % We conclude that a line carrying a $\ast$ going from left to right also goes downwards but not necessarily strictly.  One of these lines is the horizontal line arising from step $2$.

   (ix). In contrast to (iv) a line carrying a $1$ can be right and down going; \textit{but} only by one row and by exactly one row only when it shares a right hand -end point with a line carrying a $\ast$.

   \

   These are the lines $\ell_{b''_j,b'_{u'_j}}$, with $j$ odd,  given by \ref {5.4.7}(i), noting that $b''_j \in R_i$, by definition and $b'_{u'_j}\in R_{i-1}$, by the up-going linkage property.

   \

 (x).  A line carrying a $1$ can be right and up-going; \textit{and} by several rows.

 \

  These are lines $\ell_{b_{u_j},b''_j}$, with $j$ even,  given by \ref {5.4.7}(i), noting that $b''_j \in R_i$, by definition and $b_{u_j}\in R^{i-1}$ by the up-going linkage property. For the composition $(1,2,1,1,2,3)$ the line $\ell_{3,9}$ is right and up-going by two rows.

  It is easy to check that if the minimal and maximal gatings coincide then, our present construction gives exactly the result described in \cite [4.4]{FJ}.

\subsubsection{Removal of the Last Column}\label{5.4.9}

Let $\hat{\mathscr C}=(C_1,C_2,\ldots,C_k)$ denote the complete set of columns in $\mathscr D$.

Let $\ell(\hat{\mathscr C})$ denote the set of labelled lines in $\mathscr T_\mathfrak m$ given through \ref {5.4.1} - \ref {5.4.8}.

Let $\hat{ S}$ denote the set of columns obtained from $\hat{\mathscr C}$ by deleting the last column $C_k$.

\begin {lemma}  $\ell(\hat{ S})$ is obtained from $\ell(\hat{\mathscr C})$ by deleting $C_k$ and all the lines from $\hat {S}$ to $C_k$.
\end {lemma}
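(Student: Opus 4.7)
The plan is to argue by induction on the stage $i$ of Step $3$ in the construction of \ref{5.4}. Writing $\ell^i(\cdot)$ for the labelled lines produced after stage $i$, I would show that $\ell^i(\hat{S})$ is obtained from $\ell^i(\hat{\mathscr C})$ by deleting $C_k$ together with every line whose right endpoint lies in $C_k$. The base case, at the end of Step $2$, is essentially immediate: Step $1$ draws only horizontal adjacency lines, whose only involvement with $C_k$ is the left-going line entering each row of $C_k$, and the labelling rule of Step $2$ assigns $\ast$ to a left-going line entering a column $C$ of height $s$ exactly when $C$ has a left neighbour of height $s$, a property of columns weakly to the left of $C$ and hence unaffected by removing the rightmost column.

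For the induction step, I would decompose stage $i$ into its four substeps: (a) labelling the columns of $\mathscr C := \{D \in \mathscr D : \operatorname{ht} D \geq i\}$ per \ref{5.4.5}; (b) gating every ungated line in $R^{i-1}$; (c) deleting the horizontal lines $\ell_{b''_j,b''_{j+1}}$ in $R_i$ for $j$ odd; (d) drawing the new lines in $R^i$ by the minimal-distance criterion of \ref{5.4.7}, with labels as in \ref{5.4.8}. By the inductive hypothesis the ungated lines in $R^{i-1}$ present in $\ell^{i-1}(\hat{S})$ are exactly those of $\ell^{i-1}(\hat{\mathscr C})$ not ending in $C_k$. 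Because $C_k$ is rightmost, the grouping by ungated lines in \ref{5.4.5} yields identical labels for every column of $\mathscr C \setminus \{C_k\}$; only the last group of the labelling in $\hat{\mathscr C}$, which either absorbs $C_k$ with its $\mathscr C$-predecessor or forms a singleton $\{C_k\}$, can differ from $\hat{S}$. Substeps (b), (c) are then automatic: uniform gating is insensitive to removing some ungated lines, and the deleted horizontal lines in $R_i$ not involving $C_k$ match in both diagrams by the agreement of labellings up to $C_k$.

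The decisive step is (d). I would argue that the minimal-distance selection for any $b''_j$ strictly left of $C_k$ is unaffected by removing the ungated lines ending in $C_k$: by the adjacency property (Lemma \ref{5.4.8}(vii), inductively inherited from stage $i-1$), no column of height $\geq i$ lies strictly between the two endpoints of an ungated line, so any ungated line ending in $C_k$ has its left endpoint in, or to the right of, the $\mathscr C$-column immediately preceding $C_k$. Such a line can therefore never be the maximal $b_{u_j} < b''_j$ for $j$ even, nor be preferred over a candidate strictly left of $C_k$ for the minimal $b'_{u'_j} > b''_j$ when $j$ is odd, since any candidate strictly left of $C_k$ is automatically leftmost. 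Consequently the same new line is drawn at every $b''_j$ strictly left of $C_k$ in both procedures, while the remaining new lines of $\ell^i(\hat{\mathscr C})$, namely those terminating at $b''_j \in C_k$ or those produced by the special rightmost-column rule with right endpoint in $C_k$, all involve $C_k$ and are precisely the lines removed by the lemma's operation. The label ($1$ or $\ast$) carried by each new line is, per \ref{5.4.8}, inherited from the deleted horizontal line in $R_i$ it replaces, and this transfer is insensitive to the removal of lines into $C_k$; rule (ii) of \ref{5.4.7} for loose ends is handled by the same reasoning.

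The main obstacle is the careful verification in (d) that adjacency really prevents ungated lines ending in $C_k$ from interfering with the minimal-distance choice for boxes strictly left of $C_k$, combined with the case analysis on whether $C_k$ is grouped with its $\mathscr C$-predecessor or forms its own group, and on the interplay with the special rightmost-column rule for $C_{2n+1}$. In each case one must check that the minimal-distance choice in $\hat{S}$ for the new rightmost column either matches the corresponding choice in $\hat{\mathscr C}$ or is void precisely when the only alternative in $\hat{\mathscr C}$ was a line ending in $C_k$, and that the reindexing of ungated lines forced by their removal does not reshuffle the pairing used by rule (ii).
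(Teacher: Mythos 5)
Your proposal is correct and follows essentially the same line as the paper: induction on the stages of Step 3, with the observation that the minimal-distance criterion of \ref{5.4.7} is insensitive to removing $C_k$ except for lines that actually terminate in $C_k$. The paper organizes this as a clean case split on $i > c_k$ versus $i \leq c_k$ (and within the latter on $C_{2n}=C_{2n+1}$ or not), whereas you fold these cases into a uniform argument via the adjacency property of \ref{5.4.8}(vii); the substance is the same.
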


\begin {proof}  The proof is by induction on the stages.  Fix $i \in \mathbb N^+$.  Recall \ref {5.4.5} and let $\mathscr C$ (resp. $S$) denote the subset of columns of height $\geq i$ in $\hat {\mathscr C}$ (resp. $\hat{S}$).

If $i>c_k$, then $C_k \notin \mathscr C$.  Then $\mathscr C, S$ coincide.  Then via the construction in \ref {5.4.5}-\ref {5.4.7}, the lines deleted in $R_i$ at the $(i+1)^{th}$ stage, is the same for both. In this   consider the new line from  $b''_j$ joining it to an element of $R^{i-1}$ . If $j$ is odd this line goes to the right and meets the box $b'_{u'_j}$. Thus if $b'_{u'_j}$ is \textit{not} in the last column, it is unchanged by the minimal distance criterion of \ref {5.4.7} determining $u'_j$. If $j$ is even, this line goes to the left meeting $b_{u_j}$ and this is obviously unchanged by the presence of the last column.

It remains to consider the case $i \leq c_k$.  Then $C_k$ is the rightmost element of $\mathscr C$ which to compare with \ref {5.4.5} must be relabelled as $C_{2n+1}$.

Suppose $C_{2n}\neq C_{2n+1}$. Then the line joining $b''_{2n},b''_{2n+1}$ is not deleted.  Then the proof proceeds as above except that $b'_{u'_j}$ cannot belong to the last column because there are no gated lines between $C_{2n},C_{2n+1}$.   Finally suppose  $C_{2n}= C_{2n+1}$. Now deleting the line joining $b''_{2n-1},b''_{2n}$ as prescribed by \ref {5.4.6} means that  $b''_{2n-1}$ will have no right going line.  However this is also true when the last column is not present.

Finally $b''_{2n-1}$ is joined to $b'_{u'_j} \in R^{i-1}$, as prescribed by \ref {5.4.7}(i). If $b'_{u'_j}$ does not belong to the last column, then by the minimal distance criterion of \ref {5.4.7} determining $u'_j$, it is unchanged by removal of the last column.

Hence the assertion of the lemma.
\end {proof}

\subsubsection{}\label{5.4.10}

It turns out that $\ell(\hat{\mathscr C})$ does not behaves quite so well under removal (or adjunction) of columns on the left.

To make the comparison with \ref {5.4} less confusing we adjoin a column $C_0$ on the left  of $\hat{\mathscr C}$ and denote the resulting set of columns by  $\hat{T}$.

%Let $\hat{\mathscr C^\ell}$ denote the set of columns obtained from $\hat{\mathscr C}$ by deleting the first column $C_1$.

To have an idea of what we should obtain we first consider a simple case.

\

\textbf{Example.}  Consider the composition $(2,3,1,1,3,2)$ and remove the first column.  After removal of the first column, the line $\ell_{6,9}$ carried a $\ast$ now carries a $1$.  However there is a more dramatic change.  Indeed the line $\ell_{5,9}$ that carries a $1$ and the line $\ell_{6,12}$ that carries a $\ast$ are replaced by a line $\ell_{5,12}$ carrying a $\ast$.  However if we just concentrate on the lines contained entirely in $R^2$, this last more dramatic change is not seen.

\

Recall \ref {5.4.8}(ii) that to any box $b$ there is at most one left going line carrying a $\ast$. It exists exactly when the column $C$ in which it is contained has a left neighbour and $b \in C\cap R_i$, where ht $C=i$.

\begin {lemma}  Suppose $C_0$ has height $i$. Concerning only the lines lying entirely in $R^i$, one obtains $\ell(\hat{\mathscr C})$ from $\ell(\hat{T})$ by deleting $C_0$ and all the lines from $\hat {\mathscr C}$ to $C_0$ with one exception, when $\hat{\mathscr C}$ admits a (leftmost) column $C_t$ of height $i$.  Then the unique left going line from $R_i\cap C_t$ carrying a $\ast$ in $\ell(\hat{T})$, carries a $1$ in $\ell(\hat{\mathscr C})$.
\end {lemma}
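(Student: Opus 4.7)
I would proceed by induction on the stages of the construction of Section~\ref{5.4}, tracking throughout only the lines of $\ell(\hat{T})$ and $\ell(\hat{\mathscr C})$ that lie entirely in $R^i$.

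For the base case (step~2), the horizontal step-1 lines in each row $R_k$ ($k \le i$) differ only by the additional horizontal line from $C_0 \cap R_k$ to its right neighbour in $R_k$, and each such line is a line to $C_0$. The rightmost labelling of Section~\ref{5.3} is sensitive only to same-height neighbouring-column pairs: a horizontal left-going line in row $R_s$ to a column $C$ of height $s$ receives label $*$ exactly when $C$ has a left neighbour of height $s$. Because $C_0$ has height exactly $i$, adjoining $C_0$ only alters the height-$i$ neighbour structure, and only the leftmost column of $\hat{\mathscr C}$ of height $i$ (namely $C_t$, when it exists) acquires $C_0$ as a new left neighbour. Consequently, precisely the line ending at $R_i \cap C_t$ flips from label $1$ in $\hat{\mathscr C}$ to label $*$ in $\hat{T}$; all other step-2 labels coincide.

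For the inductive step at stage $j \le i$, assume the claim through stage $j-1$. The ungated $*$ lines in $R^{j-1}$ that drive stage $j$ coincide in the two constructions, since the single extra $*$ line produced by $C_0$ in the base case lies in $R_i$ and hence outside $R^{j-1}$ whenever $j \le i$. The relabelling of Section~\ref{5.4.5} for $\hat{T}$ begins with $D_0 = C_0$, and the minimal-distance criterion of Section~\ref{5.4.7} prevents any incoming line to $b''_0 = C_0 \cap R_j$: no $b_{u_0} < b''_0$ exists because $C_0$ is leftmost. Hence $C_0$ accrues no new lines during any stage.

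The principal obstacle is to reconcile the two constructions away from $C_0$ despite a possible two-index shift in the $D_j$-relabelling. Such a shift occurs precisely when ungated $*$ lines in $R^{j-1}$ exist between $C_0$ and the leftmost column $E$ of $\hat{\mathscr C}$ of height $\ge j$: then $\hat{T}$'s labels read $D_0 = D_1 = C_0$, $D_2 = E,\ldots$, whereas in $\hat{\mathscr C}$ those same $*$ lines lie to the left of $D'_0 = E$ and are absorbed by the $u_0$-clause of Section~\ref{5.4.7}. A direct inspection of rules~(i) and $u_0$ in Section~\ref{5.4.7}, together with the replacement-versus-default prescription of Section~\ref{5.4.8}, shows that $\hat{T}$'s join $\ell_{b_{u_2},b''_2}$ and $\hat{\mathscr C}$'s join $\ell_{b_{u_0},b''_0}$ are the same line with the same label, with the sole borderline case $j = i$, $E = C_t$ producing precisely the exceptional label flip of the lemma. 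An additional $\ell_{b''_1, b''_2}$-style line appearing on the $\hat{T}$ side terminates on $C_0 \cap R_j$ and so is a line to $C_0$ to be removed on passage to $\hat{\mathscr C}$. Stages $j > i$ then act only on $R_j$ and on $*$-lines in $R^{j-1}$; the only new line they could place entirely in $R^i$ arises from a type-(ii) join of Section~\ref{5.4.7} at stage $i+1$, and a short case analysis shows that any such line either involves $C_0$ or coincides with the corresponding join of $\hat{\mathscr C}$. This closes the induction and proves the lemma.
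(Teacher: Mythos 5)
Your proof follows the same strategy as the paper's: an induction on the stages of the Section~\ref{5.4} construction, with a base case at step~2 and an inductive step that compares the 5.4.5-labelling of the columns of $\hat T$ and $\hat{\mathscr C}$. Where you go beyond the paper is in making explicit what the paper compresses into one sentence (``In the first case the algorithm starts at $C_1$ and so $C_0$ can be ignored''): you spell out the two-index shift $D_0=D_1=C_0,\ D_2=E,\dots$ versus $D'_0=E,\dots$ and then match the rule-(i) join $\ell_{b_{u_2},b''_2}$ of $\hat T$ against the $u_0$-clause join $\ell_{b_{u_0},b''_0}$ of $\hat{\mathscr C}$, identifying exactly where the replacement-versus-default label prescription of 5.4.8 produces the $\ast\mapsto 1$ flip. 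You also observe, which the paper's proof does not mention at all, that stage $i+1$ can still place a rule-(ii) join entirely inside $R^i$ and therefore must be examined. This is a genuine gap in the paper's argument, which stops at ``Suppose $j\le c_0:=i$.''

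A few slips worth flagging. The assertion ``Hence $C_0$ accrues no new lines during any stage'' is too strong: your argument rules out an \emph{incoming} rule-(i) join to $b''_0$, but when $C_1=C_0$ the deletion of $\ell_{b''_1,b''_2}$ is followed by the rule-(i) join $\ell_{b''_1,b'_{u'_1}}$, which is a new \emph{outgoing} line from $C_0\cap R_j$. This does not break the argument, since that line still involves $C_0$ and hence is deleted on passage to $\ell(\hat{\mathscr C})$, but the claim as stated is false and the subsequent sentence (``terminates on $C_0\cap R_j$'') also has the orientation backwards. Finally, the concluding ``a short case analysis shows that any such line either involves $C_0$ or coincides with the corresponding join of $\hat{\mathscr C}$'' is asserted rather than carried out; since the extra $\ast$ on the line into $R_i\cap C_t$ changes the list of ungated $\ast$-lines available to rule (ii) at stage $i+1$, this is exactly the delicate point one would want written down, and it is the one place the paper's proof and yours both leave open.
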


\begin {proof}  The proof follows closely that of the last part of the proof of lemma \ref {5.4.9}. Thus it is by induction on the stages used in the construction of Section \ref {5.4}.  Fix $j \in \mathbb N^+$.  Recall \ref {5.4.5} and let $T$ (resp. $\mathscr C$) denote the subset of columns of height $\geq j$ in $\hat {T}$ (resp. $\hat{\mathscr C}$).

Suppose $j\leq c_0:=i$. At the $j^{th}$ stage the boxes in $R_k:k>i$ and the lines joining them, which are all horizontal, play no role in the construction, so can be ignored.

As in \ref {5.4.5} label the elements of $T$ as $C_0,C_1,\ldots$
Then  $C_0$ is the leftmost element of $T$, and
  $C_1$ (of \ref {5.4.5}) is either $C_0$ or is distinct from $C_0$ and there are no ungated lines in $R^{j-1}$ between them. Notice that the gating at the $j<i$ stage is the same with or without $C_0$ because the labelling of the lines lying entirely in $R^{i-1}$ is unchanged.

In the first case the algorithm of \ref {5.4.5}-\ref {5.4.7} starts at $C_1$ and so $C_0$ can be ignored.  In the second case the line joining $b_0'',b_1''$ is not deleted and so again $C_0$ can be ignored.   The only difference comes in applying \ref {5.4.8} concerning the labels on the subsequent lines and the only difference is the one cited in the conclusion of the lemma.

\end {proof}

\textbf {Remark.} One may add that the above possible difference   at the $(i+1)^{th}$ stage is that the gating in  $\hat {T}$ compared to $\hat{\mathscr C}$ can be changed, and so the subsequent labelling of columns following \ref {5.4.5}, can be  changed.  In the example the gating between $6,9$ is lost at stage $3$ when the first column is deleted.  This accounts for the subsequent more dramatic difference, noted in the example.

  \subsubsection{Proof of $P_1^i$}\label{5.4.11}

\begin {prop} After the $i^{th}$ stage has been carried out $(P_1^i)$ holds.

Moreover the up-going linkage to $R_i$, the strict non-overlapping property of the ungated lines and adjacency holds at the $(i+1)^{th}$ stage.
\end {prop}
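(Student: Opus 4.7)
The plan is a straightforward induction on the stage $i$. The base case $i=0$ is the end of step $2$: all lines are horizontal and join adjacent boxes, so $(P_1^0)$ is vacuous and up-going linkage to $R_0$, strict non-overlapping, and adjacency are immediate, as already noted in \ref{5.4.3}.

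For the inductive step, assume the proposition holds after stage $i-1$. The second sentence of the conclusion — up-going linkage to $R_i$, strict non-overlapping of ungated lines, and adjacency at the $(i+1)^{\text{th}}$ stage — is essentially already recorded in parts (iv) and (vii) of Lemma \ref{5.4.8}, whose proofs are extracted from the inductive hypothesis together with the minimal-distance and labelling conventions of \ref{5.4.5}--\ref{5.4.8}. Hence the substantive task is to verify $(P_1^i)$: every box in $R^i$ lies on a unique disjoint union of exactly $i$ composite lines.

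Here I would argue as follows. Starting from the inductively given disjoint union of $i-1$ composite lines covering $R^{i-1}$ and the single horizontal composite line formed by the unchanged $R_i$, I would track the three surgical operations of stage $i$: (a) gating of each ungated $\ast$-line $\ell_{b_j,b'_j}\subset R^{i-1}$ in \ref{5.4.6} splits a composite line at $(b_j,b'_j)$ into two fragments; (b) deletion of the horizontal segments $\ell_{b''_j,b''_{j+1}}$ with $j$ odd in \ref{5.4.6} splits $R_i$ into fragments; (c) the new lines introduced in \ref{5.4.7}(i)--(ii) re-splice these fragments into new composite lines that traverse $R_i$. Existence of a disjoint union then follows from a local check at each box $b\in R^i$: by Lemma \ref{5.4.8}(vi) at most one non-gated left-going and one non-gated right-going $1$-line meet $b$; by (ii) at most one non-gated $\ast$-line ends there; and by (viii) these cannot conflict. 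A direct count, using the fact that each splice in (c) matches exactly one $R^{i-1}$-fragment with one $R_i$-fragment, shows that the number of composite lines grows by exactly one, yielding $i$ in total.

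The main obstacle will be uniqueness of the disjoint union. Here I would argue that at every box of $R^i$ the composite line through it is rigidly determined by the local incidence data: parts (ii), (vi), (viii) of Lemma \ref{5.4.8} leave at most one genuine continuation on each side, so the composite line is reconstructed uniquely from any of its boxes. A small case analysis according to the parity of $j$ in the labelling of \ref{5.4.5}, and according to whether $C_{2n}=C_{2n+1}$, completes the verification; this is exactly where the convention of \ref{5.4.8} for inheriting the $\ast$ or $1$ label from a deleted segment $\ell_{b''_j,b''_{j+1}}$ becomes indispensable, since any other assignment would permit a second disjoint decomposition by rerouting through an $\ast$-edge. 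Existence and uniqueness together yield $(P_1^i)$.
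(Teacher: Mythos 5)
Your high-level structure (induction on the stage, with the second sentence of the conclusion coming straight from Lemma \ref{5.4.8}(iv),(vii)) matches the paper, and your use of \ref{5.4.8}(v),(vi) for existence also tracks the paper's argument. However, the uniqueness part of your proposal has a genuine gap, and it is precisely the point where most of the paper's work is concentrated.

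You argue that ``at every box of $R^i$ the composite line through it is rigidly determined by the local incidence data,'' citing Lemma \ref{5.4.8}(ii),(vi),(viii). This is not true: the gated $\ast$-lines in $R^{i-1}$ are still physically present (gating is only a marker, not a deletion), and a priori nothing in (ii),(vi),(viii) forbids a composite line from passing through a gated edge or from jumping over a column of height $i$. The paper explicitly flags this -- see the remark in \ref{5.4.2} that boxes in $R^{i-1}$ \emph{may} lie on other composite lines, and the discussion of $\ell_{6,17}$ in Example 2 of \ref{5.4.11}, a line that exists but must be excluded from the disjoint union. So local rigidity fails, and the uniqueness you need is global uniqueness of the disjoint union, not uniqueness of the continuation at each box. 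The paper handles this in two steps you have omitted: first, a reduction (via Lemmas \ref{5.4.9} and \ref{5.4.10}) to the case where the first and last columns both have height $i$, and the concatenation of the pieces between neighbouring columns of height $i$; and second, a \emph{downward} induction on $k\in[1,i]$ showing that none of the lines gated at the $(k-1)^{\text{th}}$ stage can participate in any disjoint union joining the first column to the last -- once the forced joins of \ref{5.4.7}(i),(ii) are traced through, the loose-end matching between $C_j,C_{j+1}$ ($j$ odd) leaves the gated lines unusable. Without this argument, you have only proved uniqueness under the extra hypothesis that passage through gated lines is forbidden, which is strictly weaker than $(P_1^i)$ as stated.

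Separately, your counting argument (``the number of composite lines grows by exactly one'') is plausible but not substantiated; the paper avoids it by working between pairs of neighbouring columns of height $i$, where the count of $i$ is automatic from \ref{5.4.8}(v), and then concatenating. If you want to salvage your splice-and-count approach, you would need to justify that each splice in \ref{5.4.7}(i) consumes exactly one fragment from each side and that \ref{5.4.7}(ii) does not create new components -- neither of which is immediate from what you have written.
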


\begin {proof}  The proof follows closely that of \cite [4.4.11]{FJ}.

It is immediate from the two previous lemmas that we can assume that the first and the last column of $\mathscr D$ have height $i$. Then all the boxes in $R^i$ lie between neighbouring columns of height $i$.

Between any two neighbouring columns of height $i$, the existence of a disjoint union of $i$ composite lines joining the two columns is immediate from  Lemma \ref {5.4.8}(v).  Moreover if we forbid passage across the gated lines in $R^{i-1}$, uniqueness follows from Lemma \ref {5.4.8} (vi).
%(Of course we allow passage across the ungated lines mentioned in \ref {5.4.8}(iii)).

Then we may concatenate these uniquely determined composite lines to form a disjoint union of $i$ composite lines joining boxes in the first and last columns.  Label these composite lines by the entry in $[1,i]$ of the rightmost column from which they end.
Notice this means in particular that, in the composite lines, we are not allowing lines which jump over a column of height $i$, though they may exist - viz $\ell_{6,17}$ of the example below.  (Otherwise uniqueness of the disjoint composite lines from the first to the last column \textit{fails}, and so does the fact that they go through all the boxes in $R^i$.)

Let us show, by downward induction, that the removal of the (horizontal) lines  $R_k:k \leq i$ in the $k^{th}$ stage and their replacements to join boxes on the first and last columns by the $k^{th}$ composite line means that the gated lines of the $(k-1)^{th}$ stage cannot be used in forming the required disjoint union.  Moreover in this, the $k^{th}$ composite line is uniquely determined by the lines remaining after the $\ell^{th}:i\geq \ell >k$ composite lines have been drawn.  In particular the $i^{th}$ composite line is uniquely determined even without forbidding passage across the lines gated at the $i^{th}$ stage. All this should be obvious on gentle reflection.  It might also be helpful to glance at \cite [Figure 1]{FJ}.

Here we can assume only the first $C$ and the last column $C'$ have height $i$, by the last sentence of paragraph two.

Consider the imposition of the $k^{th}$ stage and in the sense of \ref {5.4.5}, let $\mathscr C$ denote the columns of height $\geq k$ labelled as there.  Recall $A,B$ of \ref {5.4.4} that the (to be) gated lines (cf \ref {5.4.6}) at the $(k-1)^{th}$ stage lie between the columns $C_j,C_{j+1}$ for $j\in [1,2n-1]$ odd.  Then $\ell_{b''_j,b''_{j+1}}$ is removed (\ref {5.4.6}), so  $b''_j$ (resp. $b''_{j+1}$) has no right (resp. left) going line.  Thus in order that they lie on a union of composite lines joining $C$ and $C'$ we are forced (as carried out in \ref {5.4.7}(i)) to join them to $b'_{u'_{j+1}}$ and $b_{u_j}$ respectively.  Recalling that $u'_{j+2}=u_{j+1}+1$, for $j$ odd,  this means that the gated lines meeting $b'_{u_{j+1}+1},b_{u_j}$ (on both sides) cannot be used in a union of composite lines joining $C$ and $C'$.  Then in order for the latter to lie on this union one is forced to successively join loose ends arising between $C_j,C_{j+1}$.  Consequently none of the gated lines  at the $(k-1)^{th}$ stage, lying between the columns $C_j,C_{j+1}$, can be used in a union of composite lines joining $C$ and $C'$.

The last part of the proposition was already established in Lemma \ref {5.4.8}(iv,vii).
\end {proof}

\textbf{Example 1.} Consider the composition $(1,2,1,1,1,1,1,2)$ and see Figure $6$.  The horizontal lines in step $2$ all carry a $\ast$ except $\ell_{1,2},\ell_{8,9}$.  In stage $3$, $\ell_{3,10}$ is deleted and the remaining lines carrying a $\ast$ (in $R_1$) are gated. Then \ref {5.4.7}(i) prescribes joining $(3,4)$ and $(7,10)$ the latter with a $\ast$ by \ref {5.4.8}.  Then the gated lines $\ell_{2,4},\ell_{4,5},\ell_{6,7},\ell_{7,8}$ cannot be used in joining the two columns of height $2$.  What about the gated line $\ell_{5,6}$?  Since $4$ has no right going line, \ref {5.4.7}(ii) prescribes joining $(4,6)$.  This finally excludes the use of  the gated line $\ell_{5,6}$.  We may remark that the composite lines joining the columns of height $2$ can be defined by the sequence of boxes they pass through, Namely $(2,5,7,10)$ and $(3,4,6,8,9)$. Notice that the composite lines exchange the entries of the last column compared to step $2$.

\

\textbf{Example 2.}  Consider the composition $(3,2,1,1,2,3,2,3)$ and see Figure $7$.  The third composite line passes successively through the boxes $(3,9,11,17)$, passes through the ungated line $\ell_{11,17}$ and excludes the subsequent use of the gated lines $\ell_{6,9}$ and $\ell_{11,14}$.  The second composite line passes successively through the boxes $(2,5,7,8,10,13,15)$ and excludes the subsequent use of the gated line $\ell_{6,7}$. The first composite line passes successively through the boxes $(1,4,6,12,14,16)$ and contains the gated line $\ell_{6,12}$.

Notice that for the first composite line we are not allowed to use the line $\ell_{6,17}$ because it jumps over a column of height $3$ and so cannot be a concatenation of composite lines joining boxes in neighbouring columns.  It must of course also miss a box in the column it jumps over.

\

We have now completed our construction and the main result of this section which we should like to emphasize covers a dazzling variety of situations, which would have been dramatically fewer (and indeed rather boring) had we been able to just use partitions instead of compositions.

\

The reader can now breathe a sigh of relief.

  \subsubsection{Existence of a Weierstrass Section}\label{5.4.12}

For a pair of neighbouring columns $C,C'$ of height $s$, we obtain $(P_1)$ from $(P_1^s)$.  The proof, which is fairly easy, follows exactly that of \cite [Cor. 4.4.12]{FJ}, Lemma \ref {5.4.8}(vi),(vii),(viii).  The key point is that these conditions exclude a composite line joining boxes in $C,C'$ passing through some $R_{s'}:s'>s$.  This is an important role of adjacency.

 Moreover $(P_2)$ obtains.  Indeed $C'\cap R_i$ admits a left going line carrying a $\ast$ and this is exactly the one line carrying a $*$ in these disjoint composite lines prescribed by $(P_1)$.

This completes (our modified) step $3$.

The proof that $e+V$ obtained from step $3$ is a Weierstrass section,  is the same as in \cite [Thm. 6.2]{FJ}.  Indeed if one admits \ref {1.2} (Fact) due to Melnikov \cite {M1} that the number of invariant generators is the number of pairs of neighbouring columns.  Indeed it then becomes obvious through $(P_1),(P_2)$ using \cite [Lemma 4.2.5]{FJ}.

A second proof of \ref {1.2} (Fact) results from \cite [Prop. 4.3.5]{FJ} as used in \cite [Prop. 5.1]{FJ}.

%It is mainly for masochists. However we now confess that this proof can be simplified through by \cite [Prop. 3.4.8]{FJ}, as then the columns heights can be assumed to decrease which makes the proof of \cite [Prop. 4.3.5]{FJ} much easier.

\section {Regularity}\label {6}

In the following it is vital to distinguish rows $R_i$ and columns $C_j$ of $\mathscr T$ with the rows $\textbf{r}_u$ and columns $\textbf{c}_v$ in the full matrix $\textbf{M}$ defining $\mathfrak {gl}(n)$.  Here the right going line $\ell_{u,v}$ from $R_i\cap C_j$ with entry $u$, to $R_s\cap C_t$ with entry $v$, corresponds to the co-ordinate function $x_{u,v}$ lying in  $\textbf{r}_u \cap \textbf{c}_v$.
%, where $u$ (resp. $v$) is the entry of the box $R_i\cap C_j$ (resp. $R_s\cap C_t$).
%If $L$ carries a $1$ (resp. $*$), this entry is $1$ (resp. $*$).

%In this section, we take the lines joining boxes in $\mathscr T_\mathfrak m$ and their labels to be given by the construction of \ref {5.4}.

Recall that the entries in $\mathfrak m$ corresponding to a line in $\mathscr T_\mathfrak m$ carrying a $1$ (resp. a $\ast$) define e (resp. $V$).

When a lower left hand corner minor of the appropriated size is evaluated on $e+V + \textbf{1}$, its leading homogeneous term is the Benlolo-Sanderson invariant.

Here $1$ is placed at the co-ordinates defined by $e$ \textit{and} on the diagonal. At a place carrying a $\ast$ the co-ordinate is left unchanged. Finally elsewhere it is set equal to $0$.

For the present, we take the lines joining boxes in $\mathscr T_\mathfrak m$ and their labels to be given by the construction of \ref {5.4}.  Eventually we may augment $e$ by replacing it by $e_{VS}$ - see \ref {1.7}.

Recall $\mathfrak u_{\pi'}$  defined in \ref {2.8}.

We show that $e \in \mathfrak u_{\pi'}$ as well as $e \in \mathscr N$.  In a subsequent paper we show that $e_{VS}\in \mathfrak u_{\pi'}$ and $e_{VS} \in \mathscr N$.  Here we just verify the assertions of this last sentence in the special cases needed.

\

\textbf{Example.}  Let $\mathfrak p$ be defined by the composition $(3,2,1,1,2,3)$ which is actually a relatively easy example.  In this case the construction of \ref {5.4} gives
$$e =x_{1,4}+x_{2,5}+x_{3,9}+x_{4,6}+x_{5,7}+x_{7,8}+x_{8,10}+x_{9,11},
$$
with $V$ being the linear span of $\{x_{6,7},x_{6,9},x_{6,12}\}$.

It turns out that $e$ is not regular (see Example 1 of \ref {6.10.9}).  We must add to it the ``VS element'', $x_{6,11}$ to make it so. The resulting root vectors all lie in $\mathfrak u_{\pi'}$.

In this case $P.e_{VS}$ is dense in an orbital variety closure but this fails in general. Again the roots belonging to vectors occurring in $e_{VS}$ are linearly independent, but this may fail in general.

 Finally none of the co-ordinates of $V$ lie in $\mathfrak u_{\pi'}$.  In Corollary \ref {6.9.6} this is shown to hold in general using Proposition \ref {2.6}.

\subsection {Inclusion of $e$ in the Nilfibre}\label {6.1}

Take $e \in \mathfrak m$ to be given by the construction of \ref {5.4}.

\begin {lemma}  $e \in \mathscr N$.
\end {lemma}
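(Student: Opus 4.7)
The plan is to show that every generator of the polynomial algebra $\mathbb C[\mathfrak m]^{P'}$ vanishes on $e$. By the consequence of Richardson's theorem recalled in Section~\ref{1.1}, together with the result of Melnikov noted in Section~\ref{1.2}, this algebra is generated by the Benlolo--Sanderson invariants $p_i$, one for each pair of neighbouring columns. So it suffices to verify $p_i(e)=0$ for all $i$.

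Fix one pair of neighbouring columns $C,C'$ of height $s$ and let $p$ denote the corresponding Benlolo--Sanderson invariant, obtained (as recalled in Section~\ref{2.6}) as the leading term of the restriction of the minor $M_s$ to $\mathfrak m+\Id$. The construction of Section~\ref{5.4}, culminating in Section~\ref{5.4.12}, ensures that properties $(P_1)$ and $(P_2)$ of Section~\ref{5.2} hold for this pair: there is a unique disjoint union of composite lines passing through every box of $R^s$ between $C$ and $C'$, and exactly one of the constituent lines carries a $*$, while all others carry $1$. By [FJ, Lemma~4.2.5] this forces the restriction of $p$ to $e+V$ to be a nonzero scalar multiple of the single coordinate function $v\in V^*$ dual to the root vector attached to the $*$-line. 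In other words, for every $w\in V$ one has $p(e+w)=c\, v(w)$ for some nonzero scalar $c$.

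Setting $w=0$ yields $p(e)=0$. Since this argument applies to the Benlolo--Sanderson invariant of every pair of neighbouring columns, every generator of $\mathbb C[\mathfrak m]^{P'}$ vanishes on $e$, and hence so does the whole augmentation ideal. By the definition of the nilfibre (Section~\ref{1.5}), this is precisely the statement $e\in\mathscr N$.

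There is no real obstacle: all the combinatorial work has already been absorbed into the construction of Section~\ref{5.4} and into the verification of $(P_1),(P_2)$ in Proposition~\ref{5.4.11} and Section~\ref{5.4.12}. The only point worth double-checking is that the $*$-line in $(P_2)$ really does produce a linear, not merely polynomial, factor in $V$; this in turn is immediate from Lemma~\ref{5.4.8}(ii),(iii), which guarantee that the unique $*$-line attached to each pair of neighbouring columns corresponds to a single coordinate in $V$ rather than a product of several.
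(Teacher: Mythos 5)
Your argument is correct and rests on the same combinatorial ingredients as the paper's Lemma~\ref{6.1}, namely $(P_1)$ and $(P_2)$ from Section~\ref{5.2} as verified through Proposition~\ref{5.4.11} and Section~\ref{5.4.12}. The packaging, however, is different. You invoke the Weierstrass-section mechanism — the restriction $p|_{e+V}$ is a scalar multiple of a single coordinate function on $V$, by \cite[Lemma~4.2.5]{FJ} — and then simply evaluate at $w=0$. The paper instead argues directly on $e$ alone, without passing through $e+V$: it introduces the partial map $\varphi$ taking each box $b^1_j$ of the left column to the endpoint of the unique composite line from $b^1_j$ built from lines carrying only a $1$, shows $\varphi$ is injective using Lemma~\ref{5.4.8}(vi), and then uses $(P_1),(P_2)$ to see that $\varphi$ misses exactly one target, so the Benlolo–Sanderson minor (which is nonzero on $e$ iff $\varphi$ is surjective) vanishes at $e$. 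Your route is shorter and leans on the already-established linearity of the restriction; the paper's is more self-contained and explicit about why the monomials die. There is no circularity in your version, since the Weierstrass-section property is established in Section~\ref{5.4.12} independently of the fact that $e\in\mathscr N$, so both proofs are valid.
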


\begin {proof}  This generalizes \cite [Lemma 5.4.2]{FJ}. The argument is similar and a rather easy consequence of $(P_2)$.

Fix a pair of neighbouring columns $C^{1},C^{2}$ of height $s$, with $C^{1}$ to the left of $C^{2}$. For all $j=1,2,\ldots,s$, let $b^1_j$ (resp. $b^2_j$) denote the box in $C^1$ (resp. $C^2$) in row $R_j$.

By Lemma \ref {5.4.8}(vi), there is at most one composite line formed from lines carrying only $1's$ from $b^1_j$ to a box in $C^2$.  If this latter box exists we denote it by $b^2_{\varphi(j)}$.

Again by Lemma \ref {5.4.8}(vi), it follows that $\varphi$ is an injection of $[1,s]$ to itself.

Once more by Lemma \ref {5.4.8}(vi), the composite lines  carrying only $1's$ joining $b^1_j,b^2_{\varphi(j)}$ are disjoint.

Recall that a box $b$ having entry $u$ joined to it on the right by a box $b'$ with entry $v$, defines the co-ordinate function $x_{u,v}$.

 Thus a composite line joining $b^1_j$ and $b^2_{\varphi(j)}$ defines a product of entries of a minor in $\mathfrak m$.

  In particular, the Benlolo-Sanderson minor $\gr M_{C,C'}(\mathfrak m)$  defined by the pair of neighbouring columns $C,C'$ is non-vanishing on $e$ if and only if $\varphi$ is surjective.

Now in fact $\varphi$ is almost surjective.  Indeed after step $3$ of \cite [4.2.4]{FJ}, there is a unique disjoint union of composite lines which joining $b^1_j \in C^1$ to those in $b^2_{\hat{\varphi}(j)} \in C^2$, for all $j \in [1,s]$  with $\hat {\varphi}$ a permutation of this set. Exactly one carries a $\ast$.  Suppose the composite line carrying a $\ast$ starts at $t \in [1,s]$.
Then $\varphi(j)=\hat{\varphi}(j)$, for all $j \in [1,s]\setminus \{t\}$ and $\im \varphi = [1,s]\setminus \{\hat{\varphi}(t)\}$.

In particular $\varphi$ is not surjective.  Thus $e$ vanishes on $\gr M_{C,C'}(\mathfrak m)$.  Since these minors generate $\mathbb C[\mathfrak m]^{P'}_+$, the assertion of the lemma follows.

\end {proof}

%\textbf{Remark}.  This result would also have been obtained if we had just taken $(C_j'',b_j'')=(C_j,b_j)$ in $2.2$. The choice in \ref {2.2} is made to ensure that $\hat{e}$ is regular in $\mathscr N$.

\subsection{The Set of Roots in $e$}\label{6.2}

%The first step in proving the regularity of $e$ comes from the following lemma.

Recall that the co-ordinate function $x_{i,j}$ defined in \ref {2.1}, is a root vector $x_{\alpha_{i,j}}$ with $\alpha_{i,j}=\alpha_i+\cdots + \alpha_{j-1}$.

As in \ref {2.2}, let $I$ denote the set of co-ordinates defined by the lines carrying a $1$, that is to say $(i,j) \in I$ whenever there is a line carrying a $1$ joining a box with label $i$ to a box on its right with label $j$.

We also use $I$ to denote the set $\{\alpha_{i,j}\}_{(i,j)\in I}$.

Set $E=\sum_{\alpha\in I} \mathbb C x_\alpha$.

Recall that $H$ is the closed connected subgroup of $P$ with Lie algebra $\mathfrak h$.

\begin {lemma} The roots in $I$ are linearly independent.  In particular $E=\overline {H.e}$ and so is contained in $\mathscr N$.
\end {lemma}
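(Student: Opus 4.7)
The plan is to view the labelled set $I$ as the edge set of a directed graph $\Gamma$ on vertex set $[1,n]$, with each $(i,j)\in I$ giving a directed edge from $i$ to $j$. Since in the Bourbaki convention $\alpha_{i,j}=\varepsilon_i-\varepsilon_j$, a standard fact identifies the linear independence of $\{\alpha_{i,j}:(i,j)\in I\}$ with $\Gamma$ being acyclic as an undirected graph, so the whole question reduces to ruling out cycles in $\Gamma$.

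The decisive input is Lemma \ref{5.4.8}(vi), which states that no two lines carrying a $1$ share either a left endpoint or a right endpoint. In graph-theoretic terms every vertex of $\Gamma$ has in-degree at most $1$ and out-degree at most $1$, so $\Gamma$ decomposes as a disjoint union of directed paths and directed cycles. By the labelling convention of \ref{1.6}, entries strictly increase along any right-going line, and by Remarks (ix), (x) following Lemma \ref{5.4.8} every line carrying a $1$ is right-going. Consequently every directed edge $i\to j$ of $\Gamma$ satisfies $i<j$, which rules out directed cycles. Hence $\Gamma$ is a disjoint union of directed paths; in particular its underlying undirected graph is a forest, and the roots in $I$ are linearly independent. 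This graph-theoretic observation is the main (and essentially only non-routine) step; the rest is formal.

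For the ``in particular'' clause, the linear independence of the characters $\{\alpha_{i,j}\}_{(i,j)\in I}$ of $H$ means that the morphism $H\to(\mathbb C^\times)^{|I|}$, $h\mapsto(\alpha_{i,j}(h))_{(i,j)\in I}$, is surjective. Writing $e=\sum_{(i,j)\in I}x_{i,j}$, it follows that $H.e$ coincides with the Zariski-open subset of $E=\bigoplus_{(i,j)\in I}\mathbb C x_{\alpha_{i,j}}$ consisting of points all of whose coordinates are non-zero, and therefore is dense in $E$. Thus $\overline{H.e}=E$. Finally $\mathscr N$ is closed and $P$-stable, hence $H$-stable, and contains $e$ by Lemma \ref{6.1}; so $E=\overline{H.e}\subset\mathscr N$.
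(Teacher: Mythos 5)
Your proof is correct and follows essentially the same route as the paper's: both hinge on Lemma \ref{5.4.8}(vi) to get that the left indices $i$ and the right indices $j$ are each pairwise distinct, and then conclude linear independence (the paper states this more tersely, leaving implicit the step that, together with $i<j$, this forces the underlying graph to be a forest, which you spell out). Your derivation of $E=\overline{H.e}\subset\mathscr N$ from the linear independence of the characters is likewise the standard argument the paper is compressing into ``Hence the assertions.''
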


\begin {proof}  The first co-ordinates of the $x_{i,j}:i \in I$ are pairwise distinct by Lemma \ref {5.4.8}(vi) (and so are the last co-ordinates). Thus  no two $1$'s lie on the same row (nor the same column). Hence the assertions.
\end {proof}

%\textbf{Remark.} Linear independence will remain true under the modification enforced by bad Vicky pairs, because no row will carry more than one $1$.

\subsection{Strong Linear Independence}\label{6.3}

Actually one can do a little better than the above lemma.

\begin {lemma}  If $\alpha, \beta \in I$, then $\alpha-\beta$ is not a non-zero root. In particular there exists $w \in W$ such that $wI \subset \pi$.
\end {lemma}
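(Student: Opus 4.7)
The plan is to exploit the explicit form of the roots in type $A$: every positive root is $\alpha_{i,j} = \varepsilon_i - \varepsilon_j$ with $i<j$, so for distinct $(i,j), (k,\ell) \in I$
$$\alpha_{i,j} - \alpha_{k,\ell} = \varepsilon_i - \varepsilon_j - \varepsilon_k + \varepsilon_\ell$$
can be a root in $\Delta = \{\varepsilon_a - \varepsilon_b : a \neq b\}$ only when exactly one of the collisions $i = k$ or $j = \ell$ occurs; the remaining possibilities $i = \ell$ or $j = k$ would leave a term $2\varepsilon_i$ or $-2\varepsilon_j$, which cannot appear in a type-$A$ root. Hence the first assertion reduces to checking that two distinct elements of $I$ share neither their first nor their second coordinate (the case $\alpha = \beta$ being trivial).

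That combinatorial statement is exactly Lemma \ref{5.4.8}(vi), which says that no two lines carrying a $1$ may share a left-hand or a right-hand endpoint box. Reading off the co-ordinate vector $x_{i,j}$ attached to a line from the box with entry $i$ to the box with entry $j$, this forces $i \neq k$ and $j \neq \ell$ for any distinct $(i,j), (k,\ell) \in I$, completing the first part.

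For the second claim I would identify $W$ with $S_n$ so that $w\cdot \alpha_{i,j} = \varepsilon_{w(i)} - \varepsilon_{w(j)}$, which lies in $\pi$ precisely when $w(j) = w(i) + 1$. View $I$ as a directed graph $G$ on the vertex set $[1,n]$, with an edge $i \to j$ for each $(i,j) \in I$. Lemma \ref{5.4.8}(vi) again guarantees that the in-degree and out-degree at every vertex are at most one, and the positivity $i < j$ rules out directed cycles, so $G$ decomposes as a disjoint union of simple directed paths whose vertex sets partition $[1,n]$. Pick any order on these paths and let $w \in S_n$ be the bijection that relabels $[1,n]$ by $1, 2, \ldots, n$ while traversing the paths in the chosen order. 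Every edge $i \to j$ of $G$ is then sent to a consecutive pair $(w(i), w(i) + 1)$, so $wI \subset \pi$.

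I do not anticipate a serious obstacle: both halves rest only on the already-established Lemma \ref{5.4.8}(vi) together with elementary combinatorics of the type-$A$ root system. The only mildly subtle point is observing that the "crossed" collisions $i = \ell$ and $j = k$ cannot yield a root, and this is immediate from the absence of multiples $2\varepsilon_a$ in $\Delta$.
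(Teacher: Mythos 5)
Your proof is correct and takes essentially the same route as the paper for the first assertion: both reduce to Lemma \ref{5.4.8}(vi), i.e.\ to the fact that the first coordinates (resp.\ second coordinates) of the lines labelled~$1$ are pairwise distinct, and then observe that $\alpha_{i,j}-\alpha_{k,\ell}=\varepsilon_i-\varepsilon_j-\varepsilon_k+\varepsilon_\ell$ cannot be a root of $A_{n-1}$ unless $i=k$ or $j=\ell$, which are ruled out. For the second assertion, the paper's proof is a one-line appeal to ``$\pi$ is simple of type $A$'' while you fill in the concrete detail: the set $I$, viewed as a directed graph on $[1,n]$ with in/out-degree at most one and no cycles, decomposes into disjoint directed paths, and relabelling $[1,n]$ by listing each path consecutively gives a $w\in S_n$ carrying every edge to a pair of consecutive integers, hence every root of $I$ to a simple root. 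This is a valid realization of the paper's terse claim and is the standard way to see it; there is no gap.
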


\begin {proof} By the observations in the proof of the previous lemma,  $\alpha_{i,j}-\alpha_{k,\ell}$ is not a root. The last part follows from the fact that $\pi$ is simple of type $A$.
\end {proof}

\textbf{Remark.}  Of course $e \in \mathfrak m$ and so is ad-nilpotent.  Following the composite lines carrying a $1$, it becomes easy to compute the  nilpotency class of $e$ and hence $\dim G.e$, via the right hand side of \ref {2.3}, the columns heights being given by the nilpotency class.

\subsection{Cycles in the Graph of the Extended Set of Roots}\label{6.4}

After adjoining the VS terms, the conclusion of Lemma \ref {5.4.8}(vi) fails.    For linear independence (or lack of it) of the roots of the root vectors in $e_{VS}$ we use the following certainly well-known lemma, which we state and prove for completeness.

Let $R$ be a subset of non-zero roots in type $A$.  We define a graph $\mathscr G(R)$ whose edges are the elements of $R$ joined whenever two roots have a non-zero scalar product.

\begin {lemma}  The elements of $R$ are linearly independent if and only if $\mathscr G(R)$ has no cycles.
\end {lemma}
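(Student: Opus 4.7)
The plan is to identify the lemma with the standard fact that the set of edges of a subgraph of $K_n$ forms a linearly independent set in the ambient lattice $\bigoplus_{i=1}^n \mathbb{Z}\varepsilon_i$ precisely when the subgraph is a forest (i.e.\ the graphic matroid of $K_n$). Concretely, since we are in type $A$, every non-zero root is of the form $\alpha_{i,j}=\varepsilon_i-\varepsilon_j$ for some $i\neq j$, and one identifies each such root with the unordered pair $\{i,j\}$. Under this identification $\mathscr{G}(R)$ becomes a graph on (a subset of) the vertex set $[1,n]$, its edge set being $R$; moreover two roots share exactly one index (equivalently, have non-zero scalar product up to sign) iff the corresponding edges share a vertex. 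Cycles in $\mathscr{G}(R)$ in the lemma's sense then coincide with cycles of the underlying graph.

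I would first dispose of the easy direction. Suppose $\mathscr{G}(R)$ has a cycle, say with vertices $i_1,i_2,\ldots,i_k,i_{k+1}=i_1$ and edges $\{i_\ell,i_{\ell+1}\}\in R$ for $\ell\in[1,k]$. Then the telescoping identity
\[
\sum_{\ell=1}^{k}(\varepsilon_{i_\ell}-\varepsilon_{i_{\ell+1}})=0
\]
gives a non-trivial linear dependence among $\pm$ the corresponding roots in $R$, so $R$ is linearly dependent.

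For the converse, suppose $\mathscr{G}(R)$ has no cycles, i.e.\ it is a forest. I would argue by induction on $|R|$. The case $R=\emptyset$ is trivial. Otherwise, since a non-empty finite forest has a leaf, there exists a vertex $i$ of the graph incident to exactly one edge $\{i,j\}\in R$, corresponding to a unique root $\alpha_{i,j}=\varepsilon_i-\varepsilon_j\in R$. In any linear relation $\sum_{\beta\in R}c_\beta\beta=0$, the coefficient of $\varepsilon_i$ on the left-hand side is $\pm c_{\alpha_{i,j}}$, because no other $\beta\in R$ involves the index $i$; thus $c_{\alpha_{i,j}}=0$. Removing $\alpha_{i,j}$ leaves $R\setminus\{\alpha_{i,j}\}$, whose associated graph is still a forest, so by the induction hypothesis the remaining coefficients also vanish. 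Hence $R$ is linearly independent.

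There is really no serious obstacle here; the only thing to state carefully is the identification between the type $A$ root system inside $\bigoplus_{i=1}^n\mathbb{Z}\varepsilon_i$ and edges of $K_n$, and the matching between ``having non-zero scalar product'' and ``sharing a vertex'', which is a routine check since $(\varepsilon_i-\varepsilon_j,\varepsilon_k-\varepsilon_\ell)$ vanishes iff $\{i,j\}\cap\{k,\ell\}=\emptyset$.
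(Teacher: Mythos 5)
Your proof is correct and follows essentially the same route as the paper: identify each type $A$ root $\varepsilon_i-\varepsilon_j$ with an edge of a graph on $[1,n]$, get the necessity from the telescoping sum around a cycle, and get the sufficiency by locating a leaf (the paper calls it an extremal vertex), removing its unique incident edge, and inducting. You are somewhat more explicit than the paper in spelling out why the leaf forces the corresponding coefficient in a linear relation to vanish (the paper leaves this implicit after removing the edge), and you induct on the number of edges rather than on the number of vertices, but these are presentational differences rather than a distinct strategy.
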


\begin {proof}  The edges of a cycle as roots add to zero, hence necessity.

Sufficiency is proved by induction on the number of  vertices. Moreover the root system can be supposed indecomposable and hence of type $A_{n-1}$, for some integer $n>1$.  Label the roots as in Bourbaki \cite {Bo}, that is as  $\{\varepsilon_i-\varepsilon_j\}$ with $i,j \neq$ in $[1,n]$. Then $\varepsilon_i-\varepsilon_j, \varepsilon_k-\varepsilon_\ell$ have a negative (resp. positive) scalar product if and only if $j=k$ or $i=\ell$ (resp. $i=k$ or $j=\ell$).  Then the vertices of $\mathscr G(R)$ can be labelled by elements of $[1,n]$ with every label appearing only once and quite possibly not at all.

Now suppose that $\mathscr G(R)$ has no cycles.  Then being a finite graph, it must have at least one extremal vertex, that is to say having only one edge joined to it. Remove this edge. This also removes the vertex and denote its label by $j\in [1,n]$. Then we obtain a graph with vertices indexed by a subset of $[1,n]\setminus \{j\}$. This is again a finite graph with no cycles but one less vertex.  Then induction establishes the result.
%This is a disjoint union of two graphs (one possibly empty) with vertices labelled by $[1,j-1],[j+1,n]$ respectively.  The first (resp. second) graph obtains from one of the form in type $A_{j-2}$ (resp. $A_{n-j-2}$) and so the assertion results by induction.
\end {proof}

\textbf{Remark.} Although the roots of the root vectors occurring in $e_{VS}$ may be not linearly independent, the root vectors themselves are.  From the graph it is not particularly difficult to compute the nilpotency class of $e_{VS}$.  This will be carried out in a few examples.

\subsection {Deletion of the Last Column and Other Things}\label{6.5}

\subsubsection{Removal of the last column}\label{6.5.1}

Recall the notation of \ref {5.4.9}.  Here $\hat{\mathscr C}$ denotes the set of all columns in $\mathscr D$ and $\ell(\hat{\mathscr C})$ the set of labelled lines on $\mathscr T_\mathfrak m$ given through \ref {5.4}. Recall that the latter behaves well on deletion of the last column Lemma \ref {5.4.9}.  As a consequence $\ell(\hat{\mathscr C})$ may be reconstructed by induction on the number of columns.  This will be pursued in a subsequent paper.
Here we shall just obtain what is needed to describe the irreducible component of $\mathscr N$ containing $e$ as a $B$ saturation set, similar to the description of an orbital variety.  Sometimes this is an orbital variety, sometimes not - see \ref {6.10.9}, Examples.

\subsubsection{Removal of the first column}\label{6.5.2}

$\ell(\hat{\mathscr C})$ behaves less well on deletion of the first column (Lemma \ref {5.4.10}.  Yet we have already seen that these two properties can be useful and indeed will also be used in a subsequent paper to prove the remarkable description of potentially bad VS pairs in Proposition \ref {6.10.4}, formulated by the first author.

\subsubsection{Sandwiching}\label{6.5.3}

\begin {lemma}  The new lines constructed in  \ref {5.4.7} (i) (resp. (ii)) at the $i^{th}$ stage do not cross a column of height $\geq i$ (resp $\geq i+1$).

\end {lemma}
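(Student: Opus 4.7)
The plan is to localize each newly drawn line to a region of $\mathscr T_\mathfrak m$ that is free of columns of $\mathscr C$ (the columns of height $\geq i$ labelled as in \ref{5.4.5}). The tools are the column labelling together with its properties $A$ and $B$, the strict non-overlapping of ungated lines (\ref{5.4.3}), and the adjacency property at the $(i-1)^{th}$ stage.

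For part (i): the new line is either $(b_{u_j},b''_j)$ with $j$ even or $(b''_j,b'_{u'_j})$ with $j$ odd, together with the boundary pairs coming from $j=0$ and $j=2n+1$. Because property $A$ forbids ungated lines between $C_{2k},C_{2k+1}$, the ungated lines live only strictly between $C_{2k+1}$ and $C_{2k+2}$; combined with the maximality of $u_j$ (for $j$ even) and property $B$ applied to $j-1$ odd, this forces $b_{u_j}$ to be the rightmost left endpoint of an ungated line inside the $\mathscr C$-free region $(C_{j-1},C_j)$. The new line from $b_{u_j}$ to $b''_j$ is therefore confined to $[C_{j-1},C_j]$, and by $B$ it crosses no column of height $\geq i$ strictly between its endpoints. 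The symmetric argument covers $j$ odd using $b'_{u'_j}\in(C_j,C_{j+1})$, and the two boundary cases $j=0,2n+1$ reduce to the fact that strictly left of $C_0$ (resp.\ strictly right of $C_{2n+1}$) no elements of $\mathscr C$ exist by definition.

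For part (ii): I would first observe that the ungated lines inside a single region $(C_{2k+1},C_{2k+2})$ form a block of consecutive indices $\{u'_{2k+1},u'_{2k+1}+1,\ldots,u_{2k+2}\}$; this rests on the non-overlapping inequality $b'_j\leq b_{j+1}$ and on adjacency, which prevents an ungated line from straddling $C_{2k+1}$ or $C_{2k+2}$. The two endpoints $u'_{2k+1}$ and $u_{2k+2}$ of this block, together with the boundary indices $u_0,u_{2n+1}$, are exactly the indices consumed in (i) (using the relation $u'_{j+1}=u_j+1$ for $j$ even stated in \ref{5.4.7}). Every remaining free index $j$ therefore has the property that $j$ and $j+1$ lie in the same region — either the same ungated-line run, or the same "outside" region left of $C_0$ or right of $C_{2n+1}$. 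The pair $(b_j,b'_{j+1})$ is consequently confined to a single such region, which contains no $\mathscr C$-column strictly between its bounding columns, so the new line crosses no column of height $\geq i$, a fortiori none of height $\geq i+1$.

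The main obstacle is purely bookkeeping: verifying that the ungated lines partition into consecutive-index blocks aligned with the regions of the labelling, and handling the edge cases where $b_j$ or $b'_j$ sits \emph{on} a boundary column $C_{2k+1}$ or $C_{2k+2}$ rather than strictly between them (permitted because adjacency only forbids $\mathscr C$-columns \emph{strictly} between $b_j$ and $b'_j$). Once this region-localisation is secured, the non-crossing conclusion is an immediate consequence of property $B$.
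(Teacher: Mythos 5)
Your proof is correct, but the argument for part (ii) takes a genuinely different route from the paper's, and this is worth comparing. The paper dispatches (i) by citing the labelling of the height-$\geq i$ columns (which is what you unpack with properties $A$, $B$ and the maximality of $u_j$), so there the approaches coincide. For (ii), however, the paper argues by \emph{contradiction}: it notes that $b'_j$ and $b'_{j+1}$ sit at the bottoms of columns $C'_0,C'_1$ carrying the gated $\ast$-lines, invokes adjacency to control the stretches $[C_0,C'_0]$ and $[C_1,C'_1]$, and then shows that any column $C''$ of height $>i$ strictly between $C'_0$ and $C_1$ would, via the minimal-distance criterion of \ref{5.4.7}, steal the loose end $b'_{j+1}$ for a rule-(i) line — contradicting that $b'_{j+1}$ has no incoming $1$-line. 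Your argument instead goes \emph{directly}: you show (via non-overlapping, adjacency, and $u'_{j+1}=u_j+1$) that the ungated lines decompose into consecutive-index blocks, one per $\mathscr C$-free region, that the indices discarded from rule (ii) are exactly the block endpoints, and hence that every rule-(ii) pair $(b_j,b'_{j+1})$ stays inside one region. This buys you more: your conclusion is that rule-(ii) lines cross no column of height $\geq i$ at all, strictly sharper than the $\geq i+1$ the lemma asserts, and the paper's example $(4,3,2,1,1,2,3,2,1,1,2,3,4)$ — where the rule-(ii) line $\ell_{10,25}$ at the stage processing $R_4$ crosses a column of height $3$ but none of height $\geq 4$ — is in fact consistent with your sharper bound, not with a height-$i$ crossing. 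One caveat: your argument tacitly requires that the boundary index $u_0$ also be excluded from rule (ii) (otherwise $b_{u_0}$ would acquire two right-going $1$-lines, violating \ref{5.4.8}(vi), and the pair $(u_0,u_0+1)$ would span $C_0$); the paper's set $\{u_j\}_{j=1}^{2n}$ is written sloppily, and you should state explicitly that you read it as including all defined boundary $u$'s — this is implicit in the paper's phrase ``to gather loose ends.''
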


\begin {proof}  The assertion for (i) is immediate from the way that columns of height $\geq i$ are labelled.  This applies in particular to line carrying a $\ast$.  This was used to prove \ref {5.4.8}(vii) which we called adjacency.

  The assertion for (ii)  corresponds to the joining of the loose ends $b_j,b'_{j+1}$, say in columns $C_0,C'_1$.  Here there is a  gated line $\ell'_0$ (resp. $\ell'_1$) joining $b_j,b'_j$ (resp. $b_{j+1},b'_{j+1}$).  We  want to show that the new line $\ell_{b_j,b'_{j+1}}$ prescribed by \ref {5.4.7}(ii) does not cross a column of height $\geq i+1$.

  By \ref {5.4.8}(iv), $b_j',b'_{j+1}$ lie in columns $C_0',C_1'$ of height $i$.  By (i) above the columns strictly between $C_0,C_0'$ and between $C_1,C'_1$ have height $<i$.

  Recall (\ref {5.4.4}, Eq. $1$) that we have $b_{j}' <b_{j+1}$. It remains to show that there is no column of height $>i$ between $C_0'$ and $C_1$. Otherwise take the rightmost one $C''$. Then  by the minimal distance criterion of \ref {5.4.7}, the box in $C''\cap R_{i+1}$ would be joined to $b_{j+1}'$ with a line carrying a $1$, contradicting that $b_{j+1}'$ is a ``loose end'', that it to say having no left going line carrying a $1$.

\end {proof}

  \

  \textbf{Example}.  Consider the composition $(4,3,2,1,1,2,3,2,1,1,2,3,4)$ and the lines adjoined by (i),(ii) at stage three.  The line $\ell_{10,16}$ given by  (i) does not cross a column of height $\geq 2$ .  Whereas the line $\ell_{10,25}$ given by (ii) joins  the loose ends emanating from the boxes carrying $10,25$ \textit{does} cross a column of height $3$ but does not cross a column of height $\geq 4$.  A further example results from Figure $7$.

\subsubsection{An Exception}\label{6.5.4}  In the framework of the construction of \ref {5.4} we have the

\begin {cor}  Let $b$ be a lowest box in some column $C$ of $\mathscr T_\mathfrak m$ having  height $t$, that is $b \in R_t\cap C$.  If $b$ admits a left (resp. right) going line $\ell$ carrying a $1$, then every other box in $C$ admits a left (resp. right) going line $\ell'$ carrying a $1$, with the possible (important) exception when $\ell'$ is a right going line carrying a $\ast$ meeting $R_{t-1}\cap C'$ for some column $C'$ of height $t-1$ to the right of $C$.  In this case,  $C'$ admits a left neighbour $C''$ and then $\ell,\ell'$ both have $R_{t-1}\cap C'$ as a right end-point.
\end {cor}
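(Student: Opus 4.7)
The plan is to trace, stage by stage, the lines at each box of the column $C$ following the construction of Section \ref{5.4}, using Lemma \ref{5.4.8} as the principal technical tool. I treat the left-going assertion first; the right-going one follows the same backbone, augmented with the exception.

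The starting observation is that any box $b^{*}\in R_i\cap C$ with $1\le i<t$ is not the lowest box of any column ($b^{*}$ belongs only to $C$, and $R_i$ sits strictly above the bottom row $R_t$ of $C$); by Lemma \ref{5.4.8}(iii) no $\ast$-line has $b^{*}$ as right endpoint, so every left-going line at $b^{*}$ automatically carries a $1$. Thus in the left case it suffices to show that $b^{*}$ admits a left-going line at all. I first rule out that $C$ is the leftmost column of height $\ge t$: if it were, at stage $t$ it would be labelled $\tilde C_0$, and then $b=b_0''$ would have no horizontal left-going line from step $2$ (no box in $R_t$ sits to its left) while rule \ref{5.4.7}(i) adds left-going lines only to $b_j''$ for $j\ge 1$, contradicting the hypothesis. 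Hence there is a column $C^{\dagger}$ of height $\ge t$ to the left of $C$, and at each stage $i\le t$ the column $C$ carries a label $\tilde C_j$ with $j\ge 1$. If $j$ is odd, the deletion of \ref{5.4.6} removes only the right-going line at $b^{*}=b_j''$; the horizontal left-going line survives and is marked $1$ by the rightmost-labelling convention of \ref{5.3} since $i<t$. If $j$ is even, the horizontal left-going line is deleted and replaced by $\ell_{b_{u_j},b^{*}}$ via \ref{5.4.7}(i); by the labelling rule opening \ref{5.4.8} this new line inherits the $1$-label of the deleted line. The resulting $1$-line at $b^{*}$ is untouched by later stages: gating affects only $\ast$-lines, and the new lines at stages $k>i$ have endpoints in $R_k$ or at endpoints of ungated $\ast$-lines, neither of which can be $b^{*}$ by the starting observation.

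For the right case the same skeleton shows that at stage $i$ the box $b^{*}$ ends with a right-going line. The wrinkle is that $b^{*}$ may legitimately serve as the left endpoint of a $\ast$-line (Remark after Lemma \ref{5.4.8}(viii)); a right-going $\ast$-line at $b^{*}$ appears exactly when the step-$2$ horizontal line from $b^{*}$ lands at the bottom of a column $\tilde C'$ of height $i$ with a left neighbour of height $i$, and its possible replacement under \ref{5.4.7}(i) is again $\ast$ by \ref{5.4.8}. In this subcase I argue that, except in the configuration described in the statement, the next stage produces a right-going $1$-line at $b^{*}$: the surviving $\ast$-line becomes an ungated $\ast$-line in $R^{i}$ at stage $i+1$, and rule \ref{5.4.7}(ii) joins $b^{*}$ to some $b'_{k+1}$ by a new line carrying the default $1$-label, provided the corresponding index is not in the excluded set $\{u_k\}_{k=1}^{2n}$ of \ref{5.4.7}(ii). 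Using the minimal-distance criterion of \ref{5.4.7} together with the sandwiching Lemma \ref{6.5.3}, I verify that the excluded case occurs precisely when $i=t-1$ and $\tilde C'=C'$ has height $t-1$ with a left neighbour $C''$; in the same configuration the right-going $1$-line $\ell$ at $b$, produced by \ref{5.4.7}(i) at stage $t$ as $\ell_{b,b'_{u'_{j}}}$, has right endpoint $b'_{u'_{j}}=R_{t-1}\cap C'$, shared with $\ell'$, as asserted.

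The main obstacle is this last identification: isolating which instance of \ref{5.4.7}(ii) fails to supply the $1$-line and showing that this failure matches precisely the configuration of the exception, rather than some other coincidence, will require careful bookkeeping across two consecutive stage labellings of $\mathscr C$ together with the combinatorics of the index set $\{u_k\}$, mediated by the behaviour of removals of columns described in Lemmas \ref{5.4.9} and \ref{5.4.10}.
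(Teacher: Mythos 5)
Your approach is genuinely different from the paper's. You trace the construction stage by stage at each box of $C$, whereas the paper first establishes a ``Claim'' (using Lemma \ref{5.4.8}(ix),(x), up-going linkage and adjacency) that $C$ has a column $C'$ of height $\geq t-1$ adjacent on the appropriate side, then adjoins a virtual column $C''$ so that $C',C''$ become a neighbouring pair of the relevant height, and finally applies Lemma \ref{5.4.8}(v) to that auxiliary tableau (together with Lemma \ref{5.4.9} or \ref{5.4.10} to compare back). Your opening observation, that a box $b^*$ strictly above the bottom of $C$ cannot be a right endpoint of a $\ast$-line by \ref{5.4.8}(iii), is correct and nicely reduces the left case to pure existence of a left-going line; the paper gets that for free out of \ref{5.4.8}(v).

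There is however a genuine gap in your ``ruling out'' step. You assert that if $C$ were the leftmost column of height $\geq t$, then $b$ could have no left-going line, because \ref{5.4.7}(i) only acts on $b''_j$ for $j\geq 1$. But \ref{5.4.7} contains a separate, unnumbered rule that precedes (i) and (ii): if there is some $j\in[1,u]$ with $b_j < b''_0$, one joins $b_{u_0}$ to $b''_0$, and by the default labelling in \ref{5.4.8} this new line carries a $1$. So $b=b''_0$ can perfectly well acquire a left-going $1$-line at stage $t$ even when $C$ is the leftmost column of height $\geq t$. The paper's own Remark after this very corollary gives the composition $(1,1,2)$ with $C$ the column of height $2$ as exactly such an instance: $C$ is leftmost of height $\geq 2$, yet $b$ has a left-going $1$-line via the first rule of \ref{5.4.7}. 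Thus your conclusion that ``at each stage $i\leq t$ the column $C$ carries a label $\tilde C_j$ with $j\geq 1$'' does not follow, and the subsequent case analysis on the parity of $j$ does not cover all cases. Your proposal also leaves the identification of the exception-case (which instance of \ref{5.4.7}(ii) fails, and why it must match the stated configuration) explicitly unresolved, which you acknowledge; in the paper this falls out naturally once \ref{5.4.8}(v) has been invoked, since the single $\ast$-exception there pins down the exceptional box and forces $C'$ to have height $t-1$ with a left neighbour.

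To salvage a stage-by-stage argument along your lines you would need to handle the possibility that $C$ is labelled $\tilde C_0$ at some stages $i\leq t$, in which case the first unnumbered rule of \ref{5.4.7}, rather than rule (i), supplies the line; and you would need to show its label is $1$, which requires arguing it never arises as a replacement of a deleted $\ast$-line. That is doable but is precisely the extra bookkeeping the paper's route via \ref{5.4.8}(v) avoids.
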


\begin {proof}

\

\textbf{Claim}.  Under the hypothesis, $C$ admits a column $C'$ strictly on its right (resp. left) of height $s\geq t-1$.

\
The proof of the claim is slightly different in the two cases (that is, right or left).

 By \ref {5.4.8}(ix), a right going line carrying a $1$ may be down-going but by only one row.

 Hence the claim for the right.

By \ref {5.4.8}(x), a left going line $\ell$ carrying a $1$ may be down-going by more than one row.  However the latter only occurs if there is a line $\ell'$ carrying a $\ast$, gated at the $t^{th}$ stage, with the same left hand end-point $b$ as $\ell$.  By up-going linkage (\ref {5.4.3}) the right hand end $b'$ of $\ell'$ meets $R_{t-1}$ and since $\ell'$ carries a $\ast$, the box $b'$ lies at the bottom of the column $C'$ in which it is contained, that is in $C'\cap R_{t-1}$.  By adjacency (\ref {5.4.3}) this column $C'$ lies to the left of $C$ which has height $t$.

Hence the claim for the left.

%By \ref {5.4.8}(ii), a right going line carrying a $\ast$ must be up-going.

 %Suppose every column strictly to the left (resp. right ) of $C$ has height $<t-1$.  Then at the $t-1$ stage no left (resp. right) going line from $b$ is created.  Moreover at subsequent stages no lines in $R^{t-2}$ are removed or gated. Thus no new lines starting from the left (resp. right) of $C$ are created.  Thus there can be no left (resp. right) going line from $C$.

 Let $C'$ be given by the claim.

Adjoin (\textit{only} if necessary) a column $C''$ of height $s$ to be a left (resp. right) neighbour) to $C'$.  Then by \ref {5.4.8}(v) every box in $C\setminus \{b\}$  for this new tableau has a right (resp. left) going line $\ell'$ carrying a $1$, with the important exception that it may carry a $\ast$ if $C'$ lies to the right of $C$ with $\ell'$ being right going and meeting its lowest box $b'$. Then $\ell'$ cannot be $\ell$ which carries a $1$ and indeed $\ell$ must be given by \ref {5.4.7}(i) joining $b,b'$.  In addition $C'$ must have height $t-1$ and have a left neighbour in $\mathscr T_\mathfrak m$.

% By \ref {6.5.2} and \ref {6.5.3} removing the (possibly) extra column (on the left) can at most change $\ast$ to $1$.
\end {proof}

\textbf{Remark.}  One can have a left (resp. right) going line from $C$ even if all columns to its left (resp. right) have height $\leq t-1$.

  The first occurs in the array $(1,1,2)$ with $C$ be the column of height $2$.

   %There is a left and downward going line from the box labelled by $4$ in $C$ carrying a $1$ meeting the box labelled by $1$.

   The second occurs in the array $(1,2,1)$ with $C$ be the column of height $2$.  The example also illustrates the exception.

   %. Then there is a right and downward going line carrying a $1$ from the box labelled by $3$ in $C\cap R_2$ carrying a $1$ meeting the box labelled by $4$.  However the right going line from the box one lower down, namely from $C\cap R_1$ meeting the box labelled by $4$ is labelled by $\ast$, illustrating the exception.

   %When the leftmost column (viewed as $C''$)  of height $1$ is removed, then this $\ast$ is replaced by a $1$.  Moreover the line $L$ joining $3,4$ is replaced by a line joining $3,6$.

%One may remark that a right going line line $L$ carrying a $1$ and a right going line $L'$ carrying an $*$ may start in the same box.  However with respect to our stage-wise process $L'$ must have been drawn first.  Again for a box $b$ one can have at most one right going line to $b$ carrying a $*$, though one may have several left going lines carrying an $*$ from $b$.

\subsection{Line Adjunction to the Last Column}\label{6.6}

To complete our inductive description of $e$ one must show how our construction adjoins lines in the last column to the earlier ones.
This will be carried out in detail in a later paper.  The result is surprisingly simple, though for the moment irritating to establish.  The energetic reader might try it as an exercise.  Here we just concentrate on the results we presently need.

Let $b_{i,j}$ denote the box in $\mathscr D$ lying in $R_i\cap C_j$.

%It corresponds to the co-ordinate  vector $x_{i,j}$.

\subsubsection {Lines to the last Column}\label {6.6.1}

Here we just use $\mathscr C$ to denote the set $(C_1,C_2,\ldots,C_k)$ of \textit{all} the columns of $\mathscr D$.  Again we just use $S$ to denote the set of columns excluding the last, that is
$S=(C_1,C_2,\ldots,C_{k-1})$ .

Then $C_k$ is the last column of $\mathscr D$ and following \ref {2.3}, let $\textbf{C}_k$ denote the corresponding column block and $\textbf{S}$
the set of column blocks excluding the last.

Recall \ref {6.1} that $\textbf{M}$ has entries in $\{1,*\}$ which determine  the Weierstrass section $e+V$. These entries all occur in the sub-matrix defining $\mathfrak m$.

 We shall consider the height $s$ of $C_k$ to be a variable (taking non-negative integer values).  Thus it is convenient to write $C_k$ (resp. $\textbf{C}_k$) more precisely as $C_k(s)$ (resp. $\textbf{C}_k(s)$) when this height is $s\in \mathbb N$.  Similarly we write $\textbf{M}$ as $\textbf{M}(s)$.

 By Lemma \ref {5.4.9} the entries of $\textbf{S}$ do not change with $s$.

\subsubsection {The Appearance of $\ast$}\label {6.6.2}

A basic question is to locate the appearance of the $\ast$'s in \textbf{M}. These may be located through the rule given in \ref {5.4.8}.

Here there were two somewhat arbitrary choices. First the rightmost labelling in step $2$ (see third paragraph of \ref {5.3}) secondly the choice made in paragraph four of \ref {5.4.8}.  From this one easily checks that a column block $\textbf{C}_j$ admits no $\ast$ (resp. one $\ast$) exactly when $C_j$ has no (resp. one) left neighbour.  Furthermore in the second case this $\ast$ appear in the last column of $\textbf{C}_j$.  (This last fact is one reason why we made the said choices.)

The rows in which the $\ast$'s appear can also be determined and are given as below.   Let $C,C'$ be neighbouring columns of height $s$, with always the convention that the primed column lies to the right.   Let $b'$ be the box occupying $R_s\cap C'$ and $b$  the adjacent box in the nearest column of height $\geq s$ to the left of $C'$.  Let $i$ (resp. $j$) be the entry of $R_s\cap C$ (resp. $R_s \cap C')$. Set  $\text{r}_{C,C'}=i$ which we call the canonical row associated to the neighbouring pair $C,C'$.

Recall that $C,C'$ are neighbouring columns of height $s$. We say that a pair $C_1,C_1'$ of neighbouring  columns of height $s+1$,  surround the pair $C,C'$, if $C_1'$ is the nearest column strictly to the right of $C'$ of height $\geq s$.

Despite the terminology we \textit{not} require $C_1$ to be to the left of $C$.  For example for the composition $(1,2,1,2)$ the columns of height two surround the columns of height one.  By contrast this false for the composition $(2,1,2,1)$. The required difference in the definition comes from the rightmost labelling adopted in \ref {5.3}.

Let $C,C'$ be a pair of neighbouring columns which do not surround another pair. Then a $\ast$ appears in the column of $\textbf{M}$ corresponding to the last column of $\textbf{C}'$ and on  $\text{r}_{C,C'}^{th}$ row of \textbf{M}.  If $C_1,C_1'$ surround the first pair $C,C'$, then a $\ast$ appears on the last column of $\textbf{C}_1'$ and on  $\text{r}_{C,C'}^{th}$ row of \textbf{M}.  This results from the construction of \ref {5.4.7} and the labelling of \ref {5.4.8}.  This process gives rise to a string of $\ast$'s on the $\text{r}_{C,C'}^{th}$ row of \textbf{M}. These strings are always unbroken and their length is that of the nested sequence of surrounding neighbouring columns.  This string of $\ast$'s may terminate in a $1$ - see examples below.

\textbf{Examples.} The composition $(3,2,1,1,2,3)$ is an example of two-fold surrounding. There is a $\ast$ in row $6$ of \textbf{M} in columns $7,9,10$. If the first column of height three is omitted the last $\ast$ is replaced by a $1$, so ending the string in a $1$, via \ref {5.4.7}(i). For the composition $(1,1,1)$ the singleton string in row one ends in a $1$, through \ref {5.4.7}(ii).

 A further less obvious two-fold surrounding occurs in the composition $(1,3,2,1,2,3)$.  On the other hand, no non-trivial surrounding takes place for the composition $(2,3,1,1,3,2)$.

 For the composition $(1,1,3,2,1,1,2,3)$, there are three strings, one in row $1$ of length one, one on row $6$ of length one which both end in a $1$ and a further string in row $8$ of length three in columns $9,11,14$.

 For the composition $(1,3,1,2,3)$ the columns of height three do not surround the columns of height one, whilst for the composition $(1,2,3,2,1,3)$ the columns of height three do surround the columns of height two and there is a string of length two in row five of \textbf{M}.

\subsection {Changes in $\textbf{C}_k(s)$ with $s \in \mathbb N$} \label {6.7}

We shall show in a subsequent paper that $\textbf{C}_k(s)$ changes in a straightforward manner with $s$.  In particular the first $s-1$ columns of $\textbf{C}_k(s)$ and $\textbf{C}_k(s+1)$ coincide and that the last two columns of the latter follow a rather simple pattern.  For example If $S$ has a column of height $s$, then $\ast$ appears in the $s^{th}$ column of $\textbf{C}_k(s)$ and some unique row, say $\textbf{r}$, then in $\textbf{C}_k(s+1)$ it is replaced by a $1$ in $s^{th}$ column and row $\textbf{r}$.  In other words $\ast$ does not move but is transformed into a $1$.  This is not needed nor proven here.

However the following result is needed and proved here.

\subsection {Appearance of $1$ in the last column of \textbf{M}} \label {6.8}

Here we determine where a $1$ can appear in the last column of \textbf{M}.  In this we take the last column $C_k$ to have height $s:s\geq 0$ and write as before this column  as $C_k(s)$, with the entry in the box $b_k(s)=C_k(s)\cap R_{s}$, being $n$.

For our present purposes we can assume the following:

\

 $S$ has a column $C'$ of height $s$.

 \

%  Then $\ast$ appears in the last column of \textbf{M} and in row \textbf{r}.

  By this assumption, $\ast$ appears in the last column of $\textbf{C}_k(s)$ in some row $\textbf{r}$, so then the line $\ell_{r,n}$ carries a $\ast$.

Then there are two ways that a $1$ can appear in the last column, given by  a line $\ell_{b,b_k(s)}$ carrying a $1$.

Either there is no right extremal box in $R_{s}$, for example in the composition $(1,2,2,2,1,2)$, with $s=2, n=10$.  Then (ii) of \ref {5.4.7} is applied. In terms of $\textbf{M}$ this means that $1$ is placed at the end of the lowest string of $\ast$'s lying above row $\textbf{r}$.  In the example, $r=6$, whilst $\ell_{5,10}$ carries a $1$, which appears in row $5$ and the last column.
%with $5$ being the entry of $b$ and $10$ being the entry of $b_k(s)$.

Or there is a right extremal box $b$ in $R_{r}:r > s$, for example in the composition  $(1,4,2,1,2)$, with $s=2, n=10$.  Then (i) of \ref {5.4.7} is applied and $\ell_{4,10}$ carries a $1$, which appears in row $4$ and the last column.

\

Let us make more precise the above two cases coming from the joining imposed by \ref {5.4.7} and using the notation used there.  Notice that in the first case we are applying \ref {5.4.7}(ii), in the second case \ref {5.4.7}(i), both after the $s$ stage.

\

In the first case above $b=b_{u-1}$ is joined to $b'_u$, by a line going rightwards and upwards.  Now by definition, $b=b_{u-1}$ lies strictly to the left of  $b'_{u-1}$, and the latter lies in $R_{s}$.

Then by the non-overlapping of ungated lines, $b_u$ lies to the right of $b_{u-1}'$, so $b_u'$ lies strictly to the right of $b_{u-1}'$, itself in a column of height $s$.

\

Let $C'$ the rightmost column of $S$ of height $\geq s$.

\

$(\star)$. In the first case $b=b_{u-1}$ lies strictly to the left of $C'$.  In particular it does not lie in $C'^{\leq s}$.

\

$(\star\star)$. In the second case the extremal box $b$ in $R_{r}:r > s$ must lie in $R_{s+1}$ strictly to the right of $C'$.

\subsection {The Labels $1,\ast$ and the Excluded Roots} \label {6.9}

\subsubsection {The Set of Excluded Roots} \label {6.9.1}

Recall how $\mathfrak u_{\pi'}$ is defined in \ref {2.8}.

Adopt the notation of \ref {2.2} and let $C=C_k$ denote last column of the diagram $\mathscr D$.
Adjoining this last column to the set $S$ of remaining columns the
subspace $\mathfrak u_{\pi'}$ will become smaller and indeed more co-ordinates must be excluded.
These are described explicitly in \ref {2.7}.

\subsubsection {A Key Inclusion} \label {6.9.2}

Let $e$ be obtained by the construction of \ref {5.4}.  It is a sum of co-ordinate vectors.  Let $E$ denote the space spanned by these co-ordinate vectors.

\begin {prop}  $E \subset \mathfrak u_{\pi'}$.

%In particular $P.e \subset B.\mathfrak u_{\pi'}$.
\end {prop}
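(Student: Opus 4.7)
My plan is to verify directly that, for every pair of neighbouring columns $(C, C')$ of height $s$ and every coordinate vector $x_{i,k}$ entering $e$, the matrix position $(i,k)$ avoids the list of excluded positions catalogued in \ref{2.7}. Since by \ref{2.5}--\ref{2.6} the subspace $\mathfrak u_{(C,C')}=\mathfrak n\cap w_{(C,C')}(\mathfrak n)$ is spanned by the root vectors at non-excluded positions, this forces $x_{i,k}\in\mathfrak u_{(C,C')}$ for every pair, and intersecting over all pairs yields $E\subset\mathfrak u_{\pi'}$.

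Fix such a pair and adopt the labelling of \ref{2.4}: $C_0=C$, $C_{u+1}=C'$, with $C_1,\dots,C_u$ the columns of height $>s$ strictly between. Take a line $\ell_{i,k}$ carrying a $1$ whose right endpoint lies in box $C_a\cap R_p$, so that $(i,k)$ sits in the column block $\textbf{C}_a$ at relative column $p$. The easy cases are handled by direct inspection of \ref{2.7}. If $C_a$ lies outside $\{C_0,\dots,C_{u+1}\}$, equals $C_0$, or is an unlabelled column of height $<s$ between the pair, then $k$ is in no excluded column. If $C_a=C_{u+1}$, the only excluded column of $\textbf{C}_{u+1}$ is the rightmost, forcing $p=s$; but then by \ref{5.4.8}(iii) the bottom box $C'\cap R_s$ carries a $\ast$-line (since $C'$ has the same-height left neighbour $C$), and \ref{5.4.8}(ii),(vi) rule out a coincident $1$-line. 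If $C_a\in\{C_1,\dots,C_u\}$ with $p\leq s$, then $k$ lies among the first $s$ columns of $\textbf{C}_a$, hence outside the excluded last $c_a-s$ columns.

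The substantive case is $C_a\in\{C_1,\dots,C_u\}$ with $p\geq s+1$. Such a $1$-line is finalised at stage $p$ of the construction of \ref{5.4}, and by the sandwiching lemma \ref{6.5.3} it crosses no column of height $\geq p$. Combining this with adjacency (\ref{5.4.8}(vii)), the up-going linkage and non-overlapping of ungated lines (\ref{5.4.3}, \ref{5.4.8}(iv)), and the rightmost labelling of step $2$ (\ref{5.3}), I would argue that the left endpoint of $\ell_{i,k}$ must lie either strictly to the left of $C_{a-1}$ in the global diagram (placing $i$ above $\textbf{B}_{a-1}$ in $\textbf{M}$) or in the lower part $C_{a-1}^{>s}$ (placing $i$ in the gap rows $s+1,\dots,c_{a-1}$ of $\textbf{B}_{a-1}$). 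In either alternative \ref{2.7} certifies that $(i,k)$ is non-excluded.

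The main obstacle is precisely this ``either/or'': one must rule out the possibility that the left endpoint lies in the upper part $C_{a-1}^{\leq s}$, or in an unlabelled height-$<s$ column strictly between $C_{a-1}$ and $C_a$. The natural tool is an induction on the creation stage of $\ell_{i,k}$: the candidate left endpoint $b_{u_\tau}$ of \ref{5.4.7}(i) inherits its position from an ungated line whose right endpoint is at the bottom of a column of height $p-1\geq s$, which therefore cannot lie entirely inside an unlabelled height-$<s$ column. The rightmost labelling of \ref{5.3} is precisely what forces $\ast$-lines at level $s$ to attach to the rightmost candidate box, which is the combinatorial shadow of the specific row-range of excluded positions in \ref{2.7}. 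Lemma \ref{5.4.9} (behaviour under removal of the last column) provides a convenient inductive handle on the number of columns whenever required.
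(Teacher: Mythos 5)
Your overall plan — reduce to checking, for each pair of neighbouring columns, that none of the positions of $e$ is excluded, and use Lemma \ref{5.4.9} as the inductive handle — is close in spirit to the paper's argument, which runs an induction on the number of columns and invokes Lemma \ref{2.9} to reduce to the contribution $\mathfrak u_k$ of the last pair. However, there are two genuine problems with your write-up.

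First, your treatment of the case $C_a=C_{u+1}$ is incorrect. You argue that since the bottom box $C'\cap R_s$ is the right endpoint of a $\ast$-line (Lemma \ref{5.4.8}(iii)), parts (ii) and (vi) of Lemma \ref{5.4.8} forbid a coincident $1$-line. But (ii) only excludes two $\ast$-lines sharing a right endpoint and (vi) only excludes two $1$-lines doing so; neither says anything about a $1$-line and a $\ast$-line. In fact Remark (ix) following Lemma \ref{5.4.8} says precisely the opposite: a $1$-line \emph{can} share a right endpoint with a $\ast$-line (this is the ``right and down-going by one row'' case produced by \ref{5.4.7}(i) with $j$ odd), and this is also visible in the description of strings of $\ast$'s ending in a $1$ in \ref{6.6.2}. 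So a $1$-line ending at $R_s\cap C_{u+1}$ does occur, and its position lies in the excluded column of $\textbf C_{u+1}$; what has to be shown is that its \emph{row} escapes the excluded range. This is exactly what the paper's careful analysis of where a $1$ can appear in the last column of $\textbf M$ (the cases $(\star)$ and $(\star\star)$ of \ref{6.8}) accomplishes, and your proposal bypasses it entirely.

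Second, for what you correctly identify as the ``substantive case'' ($C_a\in\{C_1,\dots,C_u\}$, $p\geq s+1$), your proof is not actually a proof: you write ``I would argue that'' the left endpoint of $\ell_{i,k}$ must fall strictly left of $C_{a-1}$ or in $C_{a-1}^{>s}$, and then you name the exact possibility that must be ruled out (left endpoint in $C_{a-1}^{\leq s}$ or in an unlabelled short column between $C_{a-1}$ and $C_a$) without ruling it out. This is where all the work is. The paper's proof of Proposition \ref{6.9.2} carries out this step by a contradiction: assuming such a $1$-line $\ell_{i,j}$ exists with $i$ in $C^{\leq s}$ and $j$ in $C'^{>s}$, it forces a chain of columns $C^1,\dots,C^{t'-t}$ of successively increasing heights $t,t+1,\dots,t'-1$ between $C$ and $C'$, the last of which has height $\geq s$ and is distinct from $C$, contradicting the choice of $C$ as the nearest tall column. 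The $(\star)$ condition of \ref{6.8} is also reused there to exclude the alternative that $\ell_{i,j}$ arises from \ref{5.4.7}(ii). Until you carry out something equivalent (the ``natural tool'' you mention is the right idea but is not executed), the proposal has a gap at its most delicate point.
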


\begin {proof}  We must show that no co-ordinate vector $x_{i,j}$ in the expression for $e$ is excluded from $B.\mathfrak u_{\pi'}$.  This is achieved by induction on the number of columns of $\mathscr D$.

View $\mathscr D$ as $S$ with the last column $C_k$ adjoined.  Recall (\ref {2.8}) that $\mathfrak u_{\pi'} = \cap_{i=1}^k \mathfrak u_i$.  By Lemma \ref {2.9}(i), it follows that the $\mathfrak u_i:i<k$ contain $\textbf{C}_k$ and so by the induction hypothesis one only needs to show that $E \subset B.{\mathfrak u_k}$.

If $k=1$, there is just one column, so there are no lines between columns.  Hence $e=0$.

The case $k=2$ is a warm-up exercise.  Then there are just two columns and there are excluded co-ordinates only if they have the same height, in which case the excluded co-ordinates lie in the last row.  On the other hand all lines are horizonal and so only $\ast$ appears in the last column.

% The cases $k\leq 2$ are very easy so we shall assume $k>2$.

Let $s$ denote the height of $C_k$, that is of the last column of $\mathscr D$.

Suppose $S$ has no column of height $s$.  Then by Lemma \ref {2.9}(i) no co-ordinates are excluded from $\mathfrak u_k$ and so we can assume that $S$ admits a column of height $s$.

First consider the entries which are excluded in $\textbf{C}_k$.  As noted in \ref {2.7} (the sentence above example 1) these all occur in its last column which is also the last column of $\textbf{M}$.

Recall that $n$ is the largest entry in $\mathscr T_\mathfrak m$.  In the construction of \ref {2.4}, it is shifted to the first column $C_\ell$ of $S$ to the left of $C_k$ of height $\geq s$ and enters in $R_{s+1}$, whilst $C_k^{>s}$ is shifted to the left.
Thus by Lemma \ref {2.5} the excluded co-ordinates of $\mathfrak m$
in the last column of $\textbf{M}$ are the $(i,n)$ with $i$ an entry of $C_\ell^{\leq s}$ or an entry of $C_j:\ell<j<k$.  On the other hand since $S$ has a column of height $s$, the only possible appearances of $1$ in the last column of $\textbf{M}$ obtain from  \ref {6.8}.

Take $C'= C_\ell$ in \ref {6.8}. By $(\star)$ of \ref {6.8}, $i$ is not an entry of $C_\ell^{\leq s}$ and by $(\star\star)$ of \ref {6.8}, it is not an entry of $C_j:\ell<j<k$.

We conclude that the co-ordinates of the last column of $\textbf{M}$ given by the construction of \ref {5.4} lie in $\mathfrak u_{\pi'}$.

Finally we consider entries which are excluded from $\mathfrak u_k$ in the remaining column blocks, that is to say outside $\textbf{C}_k$.  These only arise from the construction of \ref {2.4}, when there is a column $C'$ of height $>s$ between the neighbouring columns $C_\ell,C_k$ of height $s$.  This case is more complicated and the reader needs to take a deep breath!

Let $C$ be the column of height $\geq s$ (possibly $C_\ell$) nearest on the left to $C'$ and different to $C'$. In the construction of \ref {2.4}, $C'^{>s}$ displaces $C^{>s}$ (which may be empty).

Take $i \in C^{\leq s}, j \in C'^{>s}$.  Let $b_0,b_{t'}$ be the blocks in $\mathscr T_\mathfrak m$ for which $i,j$ respectively are their entries.  Let $R_t$ (resp. $R_{t'}$) be the row in which $b_0$ (resp. $b_{t'}$) appears.  Then
$$t\leq s, t'>s. \eqno {(*)}$$

Recall that $e$ is determined by the lines carrying a $1$ in the construction of \ref {5.4}.  These lines are determined by \ref {5.4.7}(i),(ii) with their labels determined by \ref {5.4.8}.

Let us first show that there is no line $\ell:=\ell_{i,j}$ carrying a $1$ obtained by the construction of \ref {5.4.7}(i), recalling that its labelling, being either $1$ or $\ast$, is determined by \ref {5.4.8}.

Set $C^0=C$.  The leftmost column $C^1$ of height $\geq t$ to the right of $C^0$ must have height $t$.  Otherwise there would be a horizontal line $\ell_1$ obtained from step $2$ and carrying a $1$, so either not erased in step $3$, or by \ref {5.4.7}(i), a right and downward going line $\ell'_1$ from $C^0\cap R_t$ still carrying a $1$.  Since there can be only one right going line from $b_0$ carrying a $1$, this forces $\ell_1=\ell$, which contradicts $(*)$.  We conclude that $t$ = ht $C^1$.  Moreover $\ell_1$ must carry a $\ast$, and again \textit{not} be erased for otherwise it would be replaced by a downward going line from $b_0$ carrying $1$, and we would obtain the same contradiction as before.

%Let $C^2$ be the leftmost column to the right of $C^1$ of height $\geq t+1$ and $b_2$ its entry in row $R_{t+1}$.  If a strict inequality holds, then just as previously, the line $L_2$ joining $b_0,b_2$ would have to carry a $1$,  forcing $L_2=L$ and then $t+1=t'$.

 This argument proves inductively, that for all $0<i \leq t'-t$, there exist columns $C^i$ of height $t+i-1$ with $C^i$ the leftmost column to the right of $C^{i-1}$ of height $i+t-1$.  Moreover if $b_{i}$ is the  box in $C_i\cap R_{i+t-1}$, then the lines $\ell_i:i=1,2,\ldots,t'-t$ joining $b_0,b_i$ carry a $*$.

 Of course all these columns must lie to the left of $C'$.

  The lines $\ell_i:i=1,2,\ldots,t'-t$ joining $b_0,b_i$ carrying a $*$, permit the existence of the line $\ell$ from $b_0$ to $b_{t'}$ carrying a $1$ to arise from the construction of \ref {5.4.7}(i).  Yet $C^{t'-t}$ whose existence is forced by $\ell$, has height $t'-t+t-1 =t'-1\geq s$, yet is distinct from $C_0=C$.  This contradicts the choice of $C$.

    We conclude that a line $\ell_{i,j}$ constructed by \ref {5.4.7}(i) and carrying a $1$ cannot correspond to a co-ordinate $x_{i,j}$ excluded from $\mathfrak u_k$.

That the line $\ell_{i,j}$ carrying a $1$ cannot come from the construction of \ref {5.4.7}(ii) follows from \ref {6.8}$(*)$.  Indeed recall the notation of \ref {6.8}$(*)$. In this  $x_{i,n}$ obtains from a line carrying a $1$ joining a box $b=b_{u-1}$,  with entry $i$, with the box $b'_{u} = b_{s,k}$ with entry $n$.  On the other hand in the construction of \ref {2.4} the last entry $n$ of $C_k$ displaces $C'^{>s}$ and so $x_{i,n}$ is not excluded from $\mathfrak u_k$, since $b=b_{u-1}$ lies strictly to the left of $C'$.

This concludes the proof of the proposition.

\end {proof}

\textbf{Example.}  Consider the composition $(2,1,1,2,2,1)$.  Here the last column is $C_6$ and $s=1$.  There is a string having two $(*)$'s in $\textbf{r}_3$ of $\textbf{M}$ and two further singleton strings in  $\textbf{r}_6,\textbf{r}_7$.  Then \ref {5.4.7}(ii) gives the line $\ell_{3,8}$ and carries a $1$.    In the above notation $C'$ is just $C_5$ and $b_{u-1}$ lies in $C_2$.  Its entry is $3$ and so the co-ordinate $x_{3,8}$ is not excluded from $\mathfrak u_6$.  Indeed $\mathfrak u_6$, corresponding to $C_3,C_6$ being neighbouring columns of height $1$, excludes only the co-ordinates $x_{4,6},x_{5,8},x_{7,9}$.

\subsubsection {The $\ast$'s are Encircled} \label {6.9.3}

Recall that in \textbf{M} we have used a $1$ (resp. $\ast$) at the co-ordinate place $(i,j)$ exactly when there is a line carrying a $1$ (resp. $\ast$) and going rightwards from a box labelled by $i$ to a box labelled by $j$ in $\mathscr T_\mathfrak m$.  Now use a circle $O$ possibly enclosing a $1$ or $\ast$, at the co-ordinate place $(i,j)$ if $x_{i,j}$ lies in the $\mathfrak h$ stable complement to $\mathfrak u_{\pi'}$ in $\mathfrak m$, that is to say is an excluded co-ordinate in the sense of \ref {2.7}.

\begin {cor}  No circle encloses a $1$.  Every $\ast$ is enclosed by a circle.
\end {cor}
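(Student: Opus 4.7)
The first assertion is a direct restatement of Proposition \ref{6.9.2}: that proposition shows $E \subset \mathfrak{u}_{\pi'}$, so any coordinate vector $x_{i,j}$ carrying the label $1$ lies in $\mathfrak{u}_{\pi'}$ and hence is not an excluded coordinate. Equivalently, no circle encloses a $1$.

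For the second assertion I would reduce to establishing the cleaner equivalent statement $V \cap \mathfrak{u}_{\pi'} = \{0\}$. Once this is in hand, Lemma \ref{5.4.8}(i),(ii) tells us that the $g$ $\ast$'s occur at $g$ pairwise distinct matrix positions (one per pair of neighbouring columns), so $V$ is the internal direct sum $\bigoplus_{k=1}^{g} \mathbb{C}\, x_{\ast_k}$ of $g$ distinct coordinate subspaces; and $\mathfrak{u}_{\pi'}$ is itself spanned by the non-excluded coordinate vectors. Then $V \cap \mathfrak{u}_{\pi'}=0$ immediately forces each $x_{\ast_k}$ to lie in the complement of $\mathfrak{u}_{\pi'}$, so every $\ast$ is enclosed by a circle.

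To prove $V \cap \mathfrak{u}_{\pi'} = 0$ the plan is to use the Benlolo-Sanderson invariants $f_1,\ldots,f_g$ as a system of coordinates on $V$. By properties $(P_1)$ and $(P_2)$ of \ref{5.2} (combined with \cite[Lemma 4.2.5]{FJ}), the restriction of $f_k$ to $e + V$ collapses to the single surviving monomial coming from the disjoint union of composite lines attached to the $k$-th pair; in that monomial all but one factor evaluate to $1$ and the remaining factor is the $\ast$-coordinate $x_{\ast_k}$. So $f_k\vert_{e+V}$ is a non-zero scalar multiple of the linear form $v \mapsto v_{\ast_k}$ on $V$, and the family $\{f_k\vert_{e+V}\}_{k=1}^{g}$ is precisely the basis of $V^*$ dual to $\{x_{\ast_k}\}_{k=1}^{g}$. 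On the other hand Proposition \ref{2.6} gives $f_k\vert_{\mathfrak{u}_k}=0$, and since $\mathfrak{u}_{\pi'}\subset \mathfrak{u}_k$ also $f_k\vert_{\mathfrak{u}_{\pi'}}=0$. Any $v \in V \cap \mathfrak{u}_{\pi'}$ is thus killed by every $f_k$, and the basis property of the $f_k\vert_V$ then forces $v=0$.

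The one delicate point, and the only thing I expect to require care, is the single-monomial reduction implicit in $(P_1),(P_2)$ needed to identify $f_k\vert_{e+V}$ with a non-zero scalar multiple of $v_{\ast_k}$; but this is precisely what the maximal-gating construction of \ref{5.4} (culminating in Proposition \ref{5.4.11}) is designed to supply, so no genuine obstacle is anticipated.
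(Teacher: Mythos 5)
Your proposal follows essentially the same route as the paper: part one uses Proposition \ref{6.9.2} exactly as the paper does, and part two uses the single surviving monomial from $(P_1),(P_2)$ and the vanishing of each Benlolo--Sanderson invariant on $\mathfrak u_{\pi'}$ from Proposition \ref{2.6}. Your reorganisation through the clean statement $V\cap\mathfrak u_{\pi'}=\{0\}$ is a nice way to package the conclusion, and the reduction from $V\cap\mathfrak u_{\pi'}=\{0\}$ to ``each $x_{\ast_k}$ is excluded'' is correct, since $\mathfrak u_{\pi'}$ is spanned by the non-excluded coordinate vectors.

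There is however a genuine (if small) gap in your last step. You write that any $v\in V\cap\mathfrak u_{\pi'}$ ``is killed by every $f_k$, and the basis property of the $f_k|_V$ then forces $v=0$.'' But what Proposition \ref{2.6} gives you is $f_k(v)=0$, whereas the dual-basis identity you established reads $f_k(e+v)=c_k\,v_{\ast_k}$ with $c_k\neq 0$; these are evaluations at different points, and $f_k$ is not linear. The affine translate matters. The fix is immediate and uses Proposition \ref{6.9.2} once more: since $e\in E\subset\mathfrak u_{\pi'}$ and $v\in\mathfrak u_{\pi'}$, also $e+v\in\mathfrak u_{\pi'}$, hence $f_k(e+v)=0$ for all $k$, giving $c_k v_{\ast_k}=0$ and so $v=0$. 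With that sentence inserted the argument is complete and matches the spirit of the paper's own (rather terse) proof, which has the same implicit reliance on $e\in\mathfrak u_{\pi'}$ when asserting that ``this monomial vanishes on $\mathfrak u_{\pi'}$.''
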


\begin {proof}  The first part follows from the inclusion $e \in \mathfrak u_{\pi'}$ established in \ref {6.9.5}.  The second part follows from the fact established in \ref {5.2} that in every generating invariant $f$ of $S(\mathfrak m^*)^{\mathfrak p'}$, there is just one monomial which survives on restriction to $e+V$ and moreover there is just one term in this monomial which lies in $V$.  Yet by Proposition \ref {2.6}, this monomial vanishes on $\mathfrak u_{\pi'}$.  By the first part the co-ordinate vectors labelled by a $1$ do not vanish on $\mathfrak u_{\pi'}$.  Consequently the remaining co-ordinate vector which carries a $\ast$ must vanish on $\mathfrak u_{\pi'}$, so lies in an excluded co-ordinate. Since every co-ordinate vector lying in $V$ so obtains, the assertion follows.
\end {proof}

\subsubsection {The Upper Bound on $\dim B.\mathfrak u_{\pi'}$ } \label {6.9.4}

In this and the following subsection we shall calculate $\dim B.\mathfrak u_{\pi'}$.  Generally speaking calculating the dimension of a saturation set is notoriously difficult.  Even calculating $B.(\mathfrak n \cap w(\mathfrak n)): w \in W$ can seemingly only be done in type $A$ by appealing to the Robinson-Shensted algorithm. McGovern \cite {Mc} has extended this to classical type.  In type $E_8$, all hell breaks loose.

By \ref {2.8}, it follows that $B.\mathfrak u_{\pi'} \subset B.\mathfrak u_i$ for all $i$, whilst by Proposition \ref {2.6} the closure of the latter is the zero set of the invariant defined by the pair of neighbouring columns defining $\mathfrak u_i$. Thus $B.\mathfrak u_{\pi'} \subset \mathscr N$.  Again since $\mathfrak u_{\pi'}$ is a vector space and $B$ is an irreducible group, the closure of $B.\mathfrak u_{\pi'}$ is irreducible.

Recall that $g$ is defined to be the number of generators of the polynomial algebra $S(\mathfrak m^*)^{\mathfrak p'}$.  The following can be expected but is \textit {not} immediate.

\begin {prop}  $\dim B.\mathfrak u_{\pi'} \leq \dim \mathfrak m -g$.
\end {prop}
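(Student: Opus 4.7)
\medskip

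My approach is to establish the stronger statement that the nilfibre $\mathscr N=V(f_1,\ldots,f_g)$ itself has pure codimension $g$ in $\mathfrak m$. Since $\overline{B.\mathfrak u_{\pi'}}$ is already known to be irreducible and contained in $\mathscr N$ (both recorded in the paragraph preceding the proposition, the latter via Proposition \ref{2.6} applied to each pair of neighbouring columns), the desired bound $\dim \overline{B.\mathfrak u_{\pi'}}\le\dim\mathfrak m-g$ follows at once.

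\medskip

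For the pure-codimension claim I plan to show that $f_1,\ldots,f_g$ constitutes a regular sequence in $\mathbb C[\mathfrak m]$. Three properties of these generators will be used: they are algebraically independent, since they generate a polynomial algebra of Krull dimension $g$; each $f_i$ is irreducible as a polynomial, by the choice of generators in Richardson's theorem; and each $f_i$ is an $H$-weight vector of weight $\lambda_i$, with the family $\lambda_1,\ldots,\lambda_g$ linearly independent over $\mathbb Q$. The last of these is a consequence of the multiplicity-freeness asserted by Richardson: any non-trivial $\mathbb Q$-relation among the $\lambda_i$'s would, after clearing denominators and separating positive and negative parts, yield two distinct monomials in the $f_i$ of the same $H$-weight, contradicting the one-dimensionality of weight spaces in $\mathbb C[\mathfrak m]^{P'}$.

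\medskip

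The inductive heart of the argument is, assuming $f_1,\ldots,f_{i-1}$ is a regular sequence, to show $f_i$ is not a zero divisor in $\mathbb C[\mathfrak m]/(f_1,\ldots,f_{i-1})$. If it were, $f_i$ would lie in some associated prime $\mathfrak p$ of the ideal $(f_1,\ldots,f_{i-1})$. By Cohen--Macaulayness of $\mathbb C[\mathfrak m]$ combined with the induction hypothesis, $\mathfrak p$ has codimension exactly $i-1$. Since $(f_1,\ldots,f_{i-1})$ is $H$-stable (being generated by weight vectors), so is $\mathfrak p$; hence $\mathfrak p$ is $H$-homogeneous. Combining irreducibility of $f_i$ (to preclude $f_i$ from factoring through a weight-vector generator of $\mathfrak p$) with linear independence of the $\lambda_j$ (to preclude $\lambda_i$ being a non-negative integral combination of the earlier weights already present in $\mathfrak p$), one derives the required contradiction. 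Once the regular sequence is established, Krull's principal ideal theorem forces each component of $\mathscr N$ to have codimension exactly $g$, and the proposition is proved.

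\medskip

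The main obstacle is the contradiction step in the induction. It is \emph{not} a formal consequence of the three listed properties alone; one must carry out a delicate structural analysis of $H$-stable primes of $\mathbb C[\mathfrak m]$ of small codimension, ruling out the presence of any irreducible Benlolo--Sanderson generator of independent weight. This will likely require the explicit description of the $f_i$ recorded in \ref{2.6}. Should that route prove too indirect, a practical alternative is to verify directly that the differentials $df_1,\ldots,df_g$ remain linearly independent at some (hence at a generic) point of $\overline{B.\mathfrak u_{\pi'}}$, whence the implicit function theorem yields the codimension bound locally at that point, without requiring pure dimensionality of $\mathscr N$ as an intermediate step.
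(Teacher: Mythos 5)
Your proposal rests on a step that is not established and is in fact a much stronger assertion than the proposition itself: that $f_1,\ldots,f_g$ form a regular sequence, equivalently that the nilfibre $\mathscr N$ is a complete intersection of pure codimension $g$. You acknowledge this yourself when you write that the contradiction in the inductive heart ``is \emph{not} a formal consequence of the three listed properties alone'' and ``will likely require the explicit description of the $f_i$.'' That is exactly the gap. The weight-vector and irreducibility facts you cite do not by themselves rule out that some $f_i$ vanishes on a component of $V(f_1,\ldots,f_{i-1})$; the $H$-stability of the associated primes only tells you they are generated by weight vectors, which leaves plenty of room for $f_i$ to lie in one of them. Moreover the paper explicitly declines to assert equidimensionality of $\mathscr N$ (see \ref{1.3}, the opening of Section~\ref{3}, and \ref{6.10.8}); what it proves is only that the \emph{particular} component $\overline{B.\mathfrak u_{\pi'}}$ has dimension $\dim\mathfrak m-g$, not that every component does. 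So the ``stronger statement'' you propose to prove would, if true, be new content beyond the paper's scope, and nothing in your sketch supplies it.

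The paper's own argument sidesteps the complete-intersection question entirely. It inducts on $g$ along the chain of intersections: writing $\mathfrak u_{\pi'}^-=\cap_{i=1}^{k-1}\mathfrak u_i$ (omitting the last pair of neighbouring columns), the inductive hypothesis gives $\dim B.\mathfrak u_{\pi'}^-\le\dim\mathfrak m-(g-1)$. It then shows that the Benlolo--Sanderson invariant $f_k$ for the last pair does \emph{not} vanish identically on the irreducible set $B.\mathfrak u_{\pi'}^-$; this uses Corollary~\ref{6.9.3} to see that the $1$-labelled factors of the surviving monomial $M(f_k)$ lie in $\mathfrak u_{\pi'}$, together with the location (in the last column block) of the single $\ast$-labelled factor, which is only excluded by $\mathfrak u_k$ and hence survives in $\mathfrak u_{\pi'}^-$. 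Since $\overline{B.\mathfrak u_{\pi'}}\subset\overline{B.\mathfrak u_{\pi'}^-}\cap V(f_k)$ is then a proper closed subset of an irreducible variety, the dimension drops by at least one. This is a direct dimension count on the one set that matters, driven by the combinatorics of the Weierstrass section, not a global statement about $\mathscr N$. Your fallback suggestion (checking $df_1,\ldots,df_g$ independent at a generic point of $\overline{B.\mathfrak u_{\pi'}}$) is closer in spirit, but it too is only sketched, and making it precise would require essentially the same combinatorial input on composite lines and excluded roots that the paper deploys.
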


\begin {proof} We shall prove the assertion by induction on $g$. Choose an ordering of the $\mathfrak u_i:i=1,2,\ldots,k$, so that $\mathfrak u_k$ is defined by the last pair of neighbouring columns, say $C,C'$. Let $f_k$ denote the corresponding invariant generator.

Set $\mathfrak u_{\pi'}^-= \cap_{i=1}^{k-1} \mathfrak u_i$.  By the induction hypothesis we can assume that $\dim B.\mathfrak u_{\pi'}^- \leq \dim \mathfrak m -(g-1)$.

Since $\dim B.\mathfrak u_{\pi'}^-$ is an irreducible variety, it suffices to show that $f_k$ does not vanish on $B.\mathfrak u_{\pi'}^-$, to obtain the required inequality.

Recall that $f_k$ is a sum of monomials obtained by the standard development of the corresponding Benlolo-Sanderson minor. One of these monomials, say $M(f_k)$, is the product of the root vectors defined by the set of lines forming the unique disjoint union of composite lines joining the boxes in $C$ to the boxes in $C'$.  Any other monomial involves some other root vector, which can be set equal to zero making the contribution to the minor vanish.  Thus $f_k$ vanishes on $\mathfrak u_{\pi'}^-$ only if $M(f_k)$ vanishes on $\mathfrak u_{\pi'}^-$.  Now by the first part of Corollary \ref {6.9.3} every factor in $M(f_k)$, except for the one arising from the line labelled by $\ast$, lies in $\mathfrak u_{\pi'}^-$.

On the other hand, as noted in \ref {6.6.2}, the vector labelled by $\ast$  lies in the last column of $\textbf{C}'$. Yet the only excluded root vectors of the last column of $\textbf{C}'$ come from $\mathfrak u_k$. Thus the vector labelled by $\ast$ must lie in $\mathfrak u_{\pi'}^- $, since $\mathfrak u_k$ is excluded from the intersection defining  $\mathfrak u_{\pi'}^- $. Thus $M(f_k)$  cannot vanish on $\mathfrak u_{\pi'}^-$ and so cannot vanish on $B.\mathfrak u_{\pi'}^-$.  Thus $f_k$ does not vanish on $B.\mathfrak u_{\pi'}^-$.

\end {proof}

%\textbf{Remark}.  Of course the assertion would have been immediate had we known that $\mathscr N$ to be equidimensional.

\subsubsection {The Lower Bound on $\dim B.\mathfrak u_{\pi'}$ } \label {6.9.5}

The reverse inequality to Proposition \ref {6.9.1} is \textit{not} to be expected.  We start from examples, which illustrate some very fine points in the analysis.

First some notation.

  Let $X$ denote the excluded vectors (cf \ref {2.7}).  We have noted them in \textbf{M} by a $O$ and viewed $O$ as possibly encircling a $1$ or $\ast$. (In \ref {2.4} we had used a $0$, to represent vanishing\footnote {View this change of notation as a mathematical pun.}.) Let $Y$ be the set of co-ordinate vectors in $\mathfrak m$ which carry a $O$ encircling a $\ast$ and set $Z:=X \setminus Y$.  By the construction of \ref {5.4} and Corollary \ref {6.9.3}, the set $Y$ is in natural bijection with the set of generating invariants, so in particular has cardinality $g$.  Let $\textbf{X},\textbf{Z}, \textbf{Y}$ be the subspaces of $\mathfrak m$  generated by $X,Z,Y$ respectively.  Of course $\mathfrak u_{\pi'}$ is just the $\mathfrak h$ stable complement to $\textbf{X}$ in $\mathfrak m$.

  It is the space $\textbf{Z}$ which has to be recovered by the action of $B$ on $\mathfrak u_{\pi'}$.  For $n$ small this is obvious enough as there are plenty of choices to make.  However in general it becomes less obvious particularly because one has to avoid multiple use of elements of $\mathfrak u_{\pi'}$.  What we do is to \textit{cut down} to elements coming from $e$, which might seem foolhardy, yet these elements are strategically placed. In this the $e$ of the Weierstrass section $e+V$ becomes useful.

\

\textbf{Example  1.}  Consider the parabolic defined by composition $(1,3,3,1)$.  From \ref {2.7} and \ref {5.4}, we can determine those entries of $\mathfrak m$ which have a $1, \ast$ or $O$.  For the moment we care only where $\ast,O$ appear.  The former occurs at the co-ordinates labelled by $(4,7),(5,8)$ and are encircled. The remaining $O$ appear at the co-ordinates labelled by $(1,3),(1,4),(2,6),(2,7),(3,7)$.  It is the latter elements we must recover from the action of $B$ on $\mathfrak u_{\pi'}$.

%Mod out by co-ordinate vectors which are do not carry a $0$, that is to say  lying in $\mathfrak u_{\pi'}$.

 Apply successively the one parameter subgroups of $B$ generated by the \textit{distinct} elements $x_{i,7}:i=6,5;x_{5,6};x_{2,4};x_{2,3}$ to $\mathfrak u_{\pi'}$, specifically to $x_{3,6},x_{2,5},x_{2,5},x_{1,2},x_{1,2}$ and further successively mod out by the resulting vectors.  In the first set \textit{no} repetitions are allowed, otherwise the one parameter subgroups would not be distinct.  In the second set repetitions \textit{are} allowed.  Notice also that this latter set of vectors \textit{all} come from $e$.  This is not essential but it is extremely convenient because of the ``control'' it gives on the analysis.  This is emphasized by Lemma \ref {6.9.6} and the details in the proof of Theorem \ref {6.9.7}.

The resulting commutators are successively $x_{3,7},x_{2,7},x_{2,6},x_{1,4},x_{1,3}$, which span \textbf{Z}, as required.  In this the action of the fifth one parameter group generator $x_{2,3}$ on $x_{3,6} \in \mathfrak u _{\pi'}$, potentially gives a term which we do not want, namely $x_{2,6}$ but this has already been set to zero by the above moding out through the term obtained from third generator.

We conclude that $Z \subset B.\mathfrak u_{\pi}$ and so the latter has dimension at least $\dim \mathfrak u_{\pi'} + \dim \textbf{Z}= \dim \mathfrak m -g$, as required.

%In this example we see that the ``left-right'' action of the generators of the one-dimensional subgroups on $\mathfrak u_{\pi'}$ gives successively the root vectors in $Z$ up to further terms coming from the `down-up'' action but the latter are amongst those already obtained.  Moreover we also saw that they can be applied to the co-ordinate vectors lying in $e$ constructed from \ref {5.4}. so here it is relevant to recall that by Proposition \ref {6.9.2} these co-ordinate vectors lie in $\mathfrak u_{\pi'}$.  Thus we can conclude that $\textbf Z \subset \overline {B.\mathfrak u_{\pi'}}$.

We stress that the duplicity of actions which gave rise to VS pairs, which can rule out the regularity of $e$, does not cause any difficulty here.

 We may also remark that there is an additional co-ordinate vector in $e$, namely $x_{6,8}$, which is not needed here.  This is because $\mathfrak u_{\pi'}$ is $\mathfrak l^-$ stable.

\

\textbf{Example  2.} A further example is provided by the composition \newline $(4,3,2,1,1,2,3,2,1,1,2,3,4)$.  This goes through in a similar fashion.  We leave the details to the reader.  The interest of this case is that in the construction of \ref {5.4} the element $e$ is not regular.   Moreover when we attempt to add VS ones to make $e$ regular, the graph defined by this new element has a cycle (actually a double cycle).   Thus it not assured that we can find an $e$ which is regular. In fact in general one cannot (\ref {6.10.7}).

\subsubsection {Right Going Lines} \label {6.9.6}

 We now use $e$ as constructed \ref {5.4} to show that equality holds in \ref {6.9.4}.  A key point is to have enough right going lines from a given box.

 View $\mathfrak m$ as a subspace of $\textbf{M}$.  Recall that we insert $1$ (resp. $\ast$) into the $(i,j)^{th}$ entry of $\textbf{M}$ exactly when $x_{i,j}$ (resp. $\mathbb C x_{i,j}$) is a summand in $e$ (resp. $V$). This occurs exactly when the line $\ell_{i,j}$ is given by the construction of \ref {5.4} and carries a $1$ (resp. a $\ast$).

 Let $\mathscr P$ denote the set of all pairs of neighbouring columns.  Given $(C,C') \in \mathscr P$, let $X_{C,C'},Y_{C,C'},Z_{C,C'}$ be the subsets of $X,Y,Z$ lying in the column blocks between $\textbf{C},\textbf{C}'$, coming from the pair $C,C'$.  Obviously
$$X= \cup_{(C,C')\in \mathscr P} X_{C,C'}, \quad Y= \cup_{(C,C')\in \mathscr P} Y_{C,C'}.$$

 Thus
 $$Z \subset \cup_{(C,C')\in \mathscr P} Z_{C,C'}. \eqno {(*)}.$$

 Actually in $(*)$ equality holds via induction on rows, but we do not need it.

 \begin {lemma}

  For every element of $x_{i,l} \in Z$, there exists a co-ordinate vector $x_{i,k}$ in $e$ with $k<l$.

   \end {lemma}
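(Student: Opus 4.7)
My plan is to combine the explicit description of excluded coordinates in \ref{2.7} with the structural results on labelled lines from Lemma \ref{5.4.8} (notably parts (v) and (vii)) and Lemma \ref{6.5.3} to produce, for each $x_{i,l}\in Z$, a right-going line $\ell_{i,k}$ carrying a $1$ with $k<l$. I will proceed in three steps: locating the boxes containing $i$ and $l$ in $\mathscr{T}_{\mathfrak m}$, exhibiting a right-going $1$-line from $i$, and bounding its right endpoint.

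For the first step, since $(i,l)\in Z\subseteq X$, there is a pair $(C,C')$ of neighbouring columns of height $s$ for which $(i,l)$ is excluded from the corresponding $\mathfrak u$. Using \ref{2.7}, the box $l$ lies in a column $C^{j}$ of height $\geq s$ strictly to the right of $i$'s column (at a row $R_r$ with $r\geq s+1$ in the interior configuration, or $r=s$ in the last-column configuration), while $i$'s box lies in $R^{s}$---either in the top $s$ rows of a column of height $\geq s$, or in a short column (height $<s$) sandwiched between two columns of height $\geq s$---in any case strictly to the left of $C^{j}$. For the second step, I apply Lemma \ref{5.4.8}(v) to $(C,C')$: every box of $R^{s}$ lying strictly between $C$ and $C'$ and not in $C'$ admits a right-going line carrying a $1$, with the sole possible exception of the left endpoint $i_{*}$ of the unique ungated $*$-line for $(C,C')$. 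By \ref{6.6.2} the corresponding matrix position $(i_{*},l_{*})$ is in $Y$; since $(i,l)\in Z$ is not in $Y$, either $i\neq i_{*}$ (whence the $1$-line is immediate) or $i=i_{*}$ with $l\neq l_{*}$, the residual subcase being resolved via Corollary \ref{6.5.4} applied to the column of $i_{*}$ or by descending to a surrounded inner pair of shallower height.

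For the third step, the $1$-line $\ell_{i,k}$ is created at some stage $t'\leq s$ of \ref{5.4}, either as a horizontal line at step 1 or as a new line via \ref{5.4.7}. By adjacency (Lemma \ref{5.4.8}(vii)) and Lemma \ref{6.5.3}, $\ell_{i,k}$ does not cross any column of height $\geq t'$; since $C^{j}$ has height $\geq s\geq t'$, the line stops no later than $C^{j}$. If it stops strictly before $C^{j}$, then $k<l$ follows from the tableau ordering; if it stops in $C^{j}$ itself, it lands at a row $\leq s$, strictly shallower than $l$'s row, again giving $k<l$. The chief obstacle is the exceptional subcase $i=i_{*}$ in the second step; although worked examples (such as the composition $(3,1,2,2,1,3)$) strongly indicate that the construction still produces a $1$-line from $i_{*}$, a clean formal proof calls for careful tracking of the replacement mechanism in \ref{5.4.7} and its interaction with Corollary \ref{6.5.4}.
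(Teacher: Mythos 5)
Your proposal follows the same broad outline as the paper's proof (locate the boxes via \ref{2.7}, produce a right-going $1$-line via Lemma \ref{5.4.8}(v), bound the right endpoint via Lemma \ref{6.5.3}), but it has a genuine gap precisely where you flag it, and the paper closes that gap by a step you have omitted.

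The missing step is the reduction to the last column. The paper begins by invoking Lemma \ref{5.4.9} (through \ref{6.5.1}): since the labelled lines in $\textbf{M}$ and the set $Z_{C,C'}$ are both unchanged upon deleting all columns of $\mathscr D$ strictly to the right of $C'$, one may assume $C'$ is the last column. This makes the $*$-entry in the column blocks between $C,C'$ unique, lying at $x_{t,n}$, and yields the identity $Z_{C,C'}=X_{C,C'}\setminus\{x_{t,n}\}$. The exceptional box in Lemma \ref{5.4.8}(v) — the left endpoint $b_*$ of the unique ungated $*$-line for $(C,C')$, which is exactly your $i_*$ — has $x_{t,n}$ as its only problematic output, and that element has been removed from $Z_{C,C'}$ by this identity. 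So the exceptional case simply never arises for $x_{i,l}\in Z_{C,C'}$, and no appeal to Corollary \ref{6.5.4} or to ``descending to a surrounded inner pair'' is needed or made. Without the last-column reduction your subcase $i=i_*$ is genuinely open, and the hand-waves you offer for it do not obviously close it: a priori $i=i_*$ could occur for $l\neq l_*$ and $b_*$ has no right-going $1$-line at all.

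There is a smaller discrepancy in your Step 3. When the $1$-line lands in $C^j$ itself you assert it must land ``at a row $\leq s$, strictly shallower than $l$'s row,'' but a right-going $1$-line created by \ref{5.4.7}(i) may descend one row (Lemma \ref{5.4.8}(ix)), so this needs justification. The paper instead observes that all entries in row $i$ of $\textbf{M}$ to the right of $(i,l)$ within $\mathscr R_j$ are encircled, and then applies Corollary \ref{6.9.3} (no circle encloses a $1$) to force $k<l$. That corollary is one of the levers the construction was designed to provide, and your argument should use it here rather than a row-depth estimate.
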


 \begin {proof}  By $(*)$ it is enough to prove the assertion for every $Z_{C,C'}$.

 %It is enough to consider each pair $C,C'$ of neighbouring columns and then to prove this assertion with respect the subset $Z_{C,C'}$.

 By Lemma \ref {6.5} the labelling of $\textbf{M}$ is unchanged if we remove the columns strictly to the right of $C'$.  Also $Z_{C,C'}$ is unchanged because the entries of the columns strictly to the right of $C'$ are not moved (\ref {2.4}).

 Thus we can assume that $C'$ is the last column  of $\mathscr D_\mathfrak m$. Let $s$ denote its height.

 There is a unique line obtained from the construction of \ref {5.4} joining a box $b\in \mathscr T_\mathfrak m$ to $b_{s,k}$ carrying a $\ast$.  Let $t$ denote entry of $b$ and recall that $n$ is the entry of $b_{s,k}$. Then $\ast$ occurs just once in $\textbf{C}_k$, namely at $x_{t,n}$. Thus
 $$Z_{C,C'}=X_{C,C'}\setminus \{x_{t,n}\}. \eqno{(**)}$$

  It is immediate from the construction of \ref {2.4} that $X_{C,C'}$ does not depend on the columns outside the pair $C,C'$.  Yet between $C,C'$ the lines and their labelling in \textbf{M} can be affected by the columns strictly to the left of $C$ - see \ref {5.4.10}, Example.

 %and the subset $E_{C,C'}$ of the co-ordinates vectors in $e$ obtained from this pair.

%In the above we use the notation of \ref {2.4}, with $C=C_1,C_2,\ldots,C_k=C'$ being the columns between $C,C'$.

The description of  $X_{C,C'}$ is given in \ref {2.7}, and we retain its notation for the columns. One has $C=C_0,C'=C_{u+1}$. \textit{Moreover} the columns of height $<s$ are left unlabelled.

%This is justified by the fact that no elements of $X_{C,C'}$ and hence of $Z_{C,C'}$ lie in these columns.

Let $s$ be the common height of $C,C'$.  By our chosen labelling, $C_j;j \in [1,u+1]$ has height $c_j\geq s$ with $c_j>s$ exactly when $j \in [2,u]$. The elements of $X_{C,C'}$ lie in the corresponding column blocks $\textbf{C}_j:j\in [1,u+1]$.

Let $v$ (resp. $v'+1$) designate the top row of the Levi block $\textbf{B}_{j-1}$ (resp. $\textbf{B}_j$), labelled as if they are rows of $\textbf{M}$.  One may note that $v'+1\geq v+c_{j-1}$, with equality if and only if there are no columns of height $<s$, between $C_{j-1}$ and $C_j$.

For all $j\in [2,u]$ (resp. $j=u+1$) that part of $X_{C,C'}$ lying in $\textbf{C}_j$  is a rectangle $\mathscr R_j$  formed from the last $c_j-s$ columns of $\textbf{C}_j$ (resp. last column of $\textbf{C}_{u+1}$), labelled as if columns of $\textbf{M}$ and \textit{with a gap} of size $c_{j-1}-s$ lying between rows $v,v'$ of $\textbf{M}$, starting at row $v+s$ and ending at row $v+c_{j-1}-1$ of \textbf{M}.  (In particular if $j=2$, there is no gap, since $c_1=s$.)  The entries below the gap only occur when $v'>v+c_{j-1}-1$ and correspond to columns of height $<s$ between $C_{j-1},C_j$.

The elements of $X_{C,C'}$ lie in the $\mathscr R_j:j\in [2,u+1]$.

Take an element of this set lying in row $i$ of \textbf{M} and column $l$ of \textbf{M}.

To prove the lemma we have to show that in the construction of \ref {5.4} there is a box $b$ with label $i$ having a right going line with label $1$  meeting a box with label $k<l$.

  To verify this assertion we need just is a small (but subtle) computation remembering how $\mathscr T_\mathfrak m$ is numbered (\ref {2.3}) and to apply Lemmas  \ref {5.4.8}(v), \ref {6.5.3} and \ref {6.9.3}.

For each $j \in [2,u+1]$ let $t_{j-1}$ be the number of rows of \textbf{M} strictly above $\textbf{B}_j$. It is the sum of \textit{all} column heights strictly to the left of $C_j$.

One checks that if $z \in X_{C,C'}$ lies in row $i$ of \textbf{M}, then  $i$ lies in row $R_{i-t_{j-1}}$ of $\mathscr D$.  Moreover if $z$ is above the gap in $\mathscr R_j$, then $i-t_{j-1} \in [1,s]$.

Thus the required  box $b$ with label $i$ lies in $R^s$.

Almost trivially the row of the column block $\textbf{C}_j$ labelled by $i$ as a column of \textbf{M} must be strictly less than an entry of $C_j$.
%On the other hand the labels in $C_{j}$ are all $>s+t_{j-1}$.
Thus $b$ must lie in a column strictly to the left of $C_{j}$.

Now by Lemma \ref {5.4.8}(v) there is a right going line $\ell_{b,b'}$ from $b$ to the box $b'$ carrying a $1$ with the one exception  - in which case this line carries a $\ast$.  Of course this is the case described in $(**)$.

Let $k$ be the entry of $b'$. By Lemma \ref {6.5.3}, the line $\ell_{b,b'}$ cannot cross a column of height $>s$, nor can it cross $C_{u+1}$ since this is the last column and so $b'$ must lie in some column strictly to the left of $C_j$, in which case $k<l$, or in $C_j$.

Finally suppose $b' \in C_j$. Recall that by assumption $x_{i,l}$ is an encircled co-ordinate. By the description of $\mathscr R_j$, all the co-ordinates to its right on row $i$ are encircled co-ordinates. Then by Corollary \ref {6.9.3}, it follows that $k$ lies in a column of \textbf{M} lies strictly to the left of that containing $l$,  forcing $k<l$, as required.

Again if $z \in X_{C,C'}$ lies below the gap in $\mathscr R_j$ then $z$ lies in a column strictly between $C_{j-1},C_j$.  These all have height $<s$, that is lie in $R^{s-1}$.

Thus the required  box $b$ with label $i$ lies in $R^{s-1}$ and we conclude exactly as in the previous case.

 \end {proof}

 \subsubsection {Equality of Dimension} \label {6.9.7}

\begin {thm}  $\dim B.\mathfrak u_{\pi'} = \dim \mathfrak m -g$.
\end {thm}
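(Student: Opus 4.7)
The plan is to combine the upper bound from Proposition \ref{6.9.4} with a matching lower bound obtained by computing the differential of the orbit map at a single well-chosen point. Concretely, I form $\Phi\colon B\times \mathfrak u_{\pi'}\to \mathfrak m$, $(b,u)\mapsto b.u$, and evaluate its differential at $(1_B,e)$, where $e$ is the element from the construction of \ref{5.4}. By Proposition \ref{6.9.2} we have $e\in \mathfrak u_{\pi'}$, and the image of the differential at that point is exactly $[\mathfrak b,e]+\mathfrak u_{\pi'}$. Since $\overline{B.\mathfrak u_{\pi'}}$ is irreducible, it suffices to show
\[ \dim\bigl([\mathfrak b,e]+\mathfrak u_{\pi'}\bigr)\;\geq\;\dim\mathfrak m-g. \]

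The heart of the lower bound is the inclusion $\mathbf{Z}\subset [\mathfrak b,e]$. Given any $x_{i,l}\in Z$, Lemma \ref{6.9.6} produces a root vector $x_{i,k}$ with $k<l$ that is a summand of $e$, and so lies in $\mathfrak u_{\pi'}$. The positive root $\alpha_{k,l}$ gives $x_{k,l}\in\mathfrak n\subset\mathfrak b$, and the $\mathfrak{sl}(n)$ commutator formula yields
\[ [x_{k,l},\,x_{i,k}]\;=\;-x_{i,l}, \]
with no additional terms since $i<k<l$ forces $i\neq l$. Taking the span over all $x_{i,l}\in Z$ gives $\mathbf{Z}\subset [\mathfrak b,e]$.

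Now the dimension count is immediate. Because $\mathbf{Z}\subset \mathbf{X}$ and $\mathbf{X}$ is the $\mathfrak h$-stable complement of $\mathfrak u_{\pi'}$ in $\mathfrak m$, we have $\mathbf{Z}\cap\mathfrak u_{\pi'}=0$, so
\[ \dim\bigl([\mathfrak b,e]+\mathfrak u_{\pi'}\bigr)\;\geq\;\dim\mathfrak u_{\pi'}+|Z|. \]
By Corollary \ref{6.9.3} the set $Y$ of $\ast$-labelled excluded coordinates has cardinality $g$, so $|Z|=|X|-g$; combined with $\dim\mathfrak u_{\pi'}=\dim\mathfrak m-|X|$ this simplifies to $\dim\mathfrak m-g$. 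The upper bound from Proposition \ref{6.9.4} then yields the desired equality.

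The main obstacle that the plan has to clear is Lemma \ref{6.9.6}. Without its guarantee that each $x_{i,l}\in Z$ is preceded on its own row by a summand of $e$ with strictly smaller right endpoint, there is no obvious bracket pair in $\mathfrak b\times\mathfrak u_{\pi'}$ producing $x_{i,l}$, and the lower bound could conceivably fail. Once that lemma is in place, no delicate bookkeeping on the bracketing elements $x_{k,l}$ is needed: the distinct coordinate vectors $x_{i,l}$ across $Z$ are automatically linearly independent modulo $\mathfrak u_{\pi'}$, which is all that the dimension estimate requires. The worked examples in \ref{6.9.5} fit this pattern perfectly: the elements ``applied from $B$'' are precisely the $x_{k,l}$, and the elements of $\mathfrak u_{\pi'}$ they are applied to are the corresponding $x_{i,k}$ drawn from $e$.
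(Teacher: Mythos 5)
Your overall framework is essentially the same as the paper's: combine the upper bound from Proposition \ref{6.9.4} with a lower bound obtained by computing the differential of the $B$-saturation map at the point $(1_B,e)$, using Proposition \ref{6.9.2} to put $e$ in $\mathfrak u_{\pi'}$ and Lemma \ref{6.9.6} to furnish the bracketing elements $x_{k,l}$.

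However, there is a genuine gap in the inclusion $\mathbf Z\subset[\mathfrak b,e]$. You compute $[x_{k,l},x_{i,k}]=-x_{i,l}$, which is correct, but $x_{i,k}$ is only one summand of $e$, so the element of $[\mathfrak b,e]$ you actually produce is $[x_{k,l},e]$. This bracket picks up a second term $x_{k,b}$ whenever $x_{l,b}$ is also a summand of $e$ (by Lemma \ref{5.4.8}(vi) there is at most one such $b$). Thus $[x_{k,l},e]$ is $-x_{i,l}$ only modulo this possible extra term, and that extra term need not lie in $\mathfrak u_{\pi'}$; the claimed inclusion $\mathbf Z\subset[\mathfrak b,e]$ is therefore not established, and your closing remark that ``no delicate bookkeeping on the bracketing elements is needed'' is exactly the point where the proof breaks down.

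The repair is precisely the final paragraph of the paper's proof: one must process the columns of $\textbf{M}$ from right to left. Since $b>l>k$, the extra term $x_{k,b}$ sits in a column of $\textbf{M}$ strictly to the right of the target $x_{i,l}$, so in the right-to-left ordering the matrix of the map $x_{k,l}\mapsto[x_{k,l},e]\bmod\mathfrak u_{\pi'}$ is triangular with $-1$'s on the diagonal. That triangularity — not the independence of the $x_{i,l}$ themselves — is what yields the rank estimate $\dim([\mathfrak b,e]+\mathfrak u_{\pi'})\geq\dim\mathfrak u_{\pi'}+|Z|=\dim\mathfrak m-g$. Without this ordering argument the lower bound does not follow from what you have written.
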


\begin {proof} It suffices by Proposition \ref {6.9.4} to deduce the inequality $\geq$. Retain the notation of \ref {6.9.8}.

Define $e$ through \ref {5.4}.  Recall that by Proposition \ref {6.9.2} and Proposition \ref {2.6}, one has $e \in \mathfrak u_{\pi'}\subset \mathscr N$.  Thus if $e$ is regular in $\mathscr N$, that is if $\dim P.e = \dim \mathfrak m -g$, then the required lower bound of the theorem is immediate.  Yet regularity of $e$ can fail \ref {6.10.9}, Example 1.

As in Example 1 of \ref {6.9.8}, the required lower bound will results from the following

\

\textbf{Claim}.  There exist $\dim \textbf{Z}$ one parameter subgroups of $B$ such that their action on $e$ form tangent spaces which sum to \textbf{Z}.

\

Consider the first column of \textbf{M} containing elements of $Z$ starting from the right. Label this column by $l$. Assume that there is an element of $Z$ in its $i$ row of \textbf{M}, so at the co-ordinate $x_{i,l}$.  By Lemma \ref {6.9.6}, there is a $1$ coming from a co-ordinate vector in  $e$ strictly to the left of a $x_{i,l}$.  This co-ordinate vector is thus $x_{i,k}$ with $k<l$.  Now repeat for all rows of this columns and then for all columns moving leftwards. Then take the $x_{k,l}$ to be the generators of our desired set of one parameter subgroups of $B$.

These generators are pairwise distinct (in a batch corresponding to some fixed $l$) because  the rows (indexed here by $k$) in which $1$'s occur in $\textbf{M}$ are pairwise distinct by Lemma \ref {5.4.6}(vi). Varying the index $l$ labelling the columns in which elements of $Z$ occur, we conclude that they are all distinct, because the columns of \textbf{M} in which the $1$'s occur are  pairwise distinct, again by Lemma \ref {5.4.6}(vi).

This construction sets up a bijection between the elements of $Z$ and the required generators of the one parameter subgroups of $B$. Moreover their action on $e$ give tangent spaces forming a basis of $ \dim \textbf{Z}$.  This proves the claim.

 In Example $1$ of \ref {6.9.8}, the batches corresponding to columns labelled by increasing $l$ are: $x_{2,3};x_{2,4};x_{5,6};x_{5,7},x_{6,7}$. Example $2$ gives much bigger batches which the reader may wish to compute.

Again, as in Example $1$ of \ref {6.9.8}, the $x_{i,k}$ coming from $e$ may be used repeatedly.

Finally let us consider the problem associated with VS pairs.  This follows the analysis given in the example.  In the first instance the action of $\mathfrak n$ on the $1$'s (which by Corollary \ref {6.9.6} lie in $\mathfrak u_{\pi'}$) gives $\textbf{Z}$.  In this we start by successively moding out all the elements in $\mathfrak m$ beginning at the rightmost column and moving leftwards.  Then, as in the example, the second term associated with the adjoint action of a co-ordinate $x_{i,k}\in \mathfrak n$ on $\mathfrak u_{\pi'}$, gives some  $x_{i,l}$, whereas the first term is obtained by the action of the same $x_{i,k}$ on some $x_{i',i}:i'<i$ .  Since necessarily $l>k$ this second term $x_{i,l}$ lies in a column to the right of the one containing the first term, namely $x_{i',k}$.  Thus this new term can be assumed to have already been set to zero.

This concludes the proof of the theorem.
\end {proof}

\textbf{Remark.}  Thus concerning this theorem, the problem associated with VS pairs is rather easily dealt with.  We may also prove this theorem by showing that $E_{VS} \subset \mathfrak u_{\pi'}$.  This does hold and is the reason why we adjoin the second right hand co-ordinate to $e$ - see \ref {1.7}.  However to do this we first need sufficient information on bad VS pairs whose acquisition is a \textit{hard}  matter.  Thus it will be postponed to a subsequent paper.

  \subsubsection {A Component of the Nilfibre} \label {6.9.8}

  Let $M$ be a $\mathbb C$ vector space.  Then $M$ admits a natural action of the multiplicative group $\mathbb C^*$ in which each co-ordinate is multiplied by the same non-zero scalar.  Recall that a closed algebraic subvariety $\mathscr V$ of $M$ is said to be conical if it is stable under this action.  If $\mathscr V$ is conical, we define its projectivisation $[\mathscr V]$ to be the set of equivalence classes for the above action of $\mathbb C^*$ in $\mathscr V \setminus \{0\}$.  It is a projective subvariety of $[M]$ of dimension one smaller than the affine dimension of $\mathscr V$.

  Take $M = \mathfrak m$.  Then the  nilfibre $\mathscr N$ (for invariants with respect to the action of $P'$) is conical and so is any (irreducible) component $\mathscr C$ of the nilfibre.    Again the closure  $\overline {B.\mathfrak u_{\pi'}}$ of $B.\mathfrak u_{\pi'}$ is conical.  Finally let $e+V$ be a Weierstrass section in $\mathfrak m$ (for the action of $P'$.  Then the set $\mathscr V^e$ of all non-zero scalar multiples of $e+v:v \in V\setminus \{0\}$ is conical.

  \begin {cor}   $\overline {B.\mathfrak u_{\pi'}}$ is a component of $\mathscr N$.
  \end {cor}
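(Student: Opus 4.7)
The plan is to exhibit the Weierstrass element $e$ as a smooth point of $\mathscr{N}$ with local dimension $\dim\mathfrak{m}-g$, thereby forcing the unique component of $\mathscr{N}$ through $e$ to coincide with $\overline{B.\mathfrak{u}_{\pi'}}$.

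First I would assemble the basic structural facts. Since $\mathfrak{u}_{\pi'}=\bigcap_{i=1}^{g}\mathfrak{u}_i$, Proposition~\ref{2.6} gives $B.\mathfrak{u}_{\pi'}\subseteq\bigcap_{i}\overline{B.\mathfrak{u}_i}=V(f_1,\ldots,f_g)=\mathscr{N}$, where $f_1,\ldots,f_g$ are the Benlolo-Sanderson generators of $\mathbb{C}[\mathfrak{m}]^{P'}$; hence $\overline{B.\mathfrak{u}_{\pi'}}\subseteq\mathscr{N}$. Irreducibility of $\overline{B.\mathfrak{u}_{\pi'}}$ follows from irreducibility of $B\times\mathfrak{u}_{\pi'}$, and $\dim\overline{B.\mathfrak{u}_{\pi'}}=\dim\mathfrak{m}-g$ is Theorem~\ref{6.9.7}. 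Proposition~\ref{6.9.2} places $e$ in $\mathfrak{u}_{\pi'}\subseteq\overline{B.\mathfrak{u}_{\pi'}}$, while Lemma~\ref{6.1} places $e$ in $\mathscr{N}$.

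The key step is to show that $\mathscr{N}$ is smooth at $e$ with local dimension $\dim\mathfrak{m}-g$. The Weierstrass section property says the quotient morphism $\pi=(f_1,\ldots,f_g):\mathfrak{m}\to\mathbb{A}^g$ restricts to an isomorphism $e+V\xrightarrow{\sim}\mathbb{A}^g$. Differentiating at $e$ and noting that $T_e(e+V)=V$, we get an isomorphism $d\pi_e|_V:V\xrightarrow{\sim}\mathbb{A}^g$, so $d\pi_e:\mathfrak{m}\to\mathbb{A}^g$ is surjective and the cotangent vectors $df_1(e),\ldots,df_g(e)$ are linearly independent in $\mathfrak{m}^*$. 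Hence $\ker d\pi_e$ has dimension $\dim\mathfrak{m}-g$, and the Zariski tangent space $T_e\mathscr{N}\subseteq\ker d\pi_e$ satisfies $\dim T_e\mathscr{N}\leq\dim\mathfrak{m}-g$. By Krull's Hauptidealsatz the local dimension of $\mathscr{N}=V(f_1,\ldots,f_g)$ at $e$ is at least $\dim\mathfrak{m}-g$, and since this local dimension is bounded above by $\dim T_e\mathscr{N}$, the two inequalities pinch both quantities to $\dim\mathfrak{m}-g$, making $e$ a smooth point of $\mathscr{N}$.

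Finally, at a smooth point $e$ lies on a unique irreducible component $\mathscr{C}$ of $\mathscr{N}$ whose dimension equals the local dimension at $e$, namely $\dim\mathfrak{m}-g$. Since $\overline{B.\mathfrak{u}_{\pi'}}$ is irreducible and contains $e$, it must sit inside $\mathscr{C}$; with $\dim\overline{B.\mathfrak{u}_{\pi'}}=\dim\mathscr{C}=\dim\mathfrak{m}-g$ this containment is forced to be an equality, so $\overline{B.\mathfrak{u}_{\pi'}}=\mathscr{C}$ and the Corollary follows. No serious obstacle arises: the entire argument packages the Weierstrass section property (which delivers the linear independence of the $df_i(e)$) together with Krull's bound and the dimension equality of Theorem~\ref{6.9.7}.
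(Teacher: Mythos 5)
Your argument is correct, and it is a genuinely different route from the paper's. The paper argues by contradiction using projective intersection theory: if $\overline{B.\mathfrak{u}_{\pi'}}$ were a proper subvariety of a component $\mathscr{C}$, then $\dim[\mathscr{C}] > \dim[\mathfrak{m}]-g$, and since the projectivized cone $[\mathscr{V}^e]$ over the Weierstrass section (punctured at $e$) has complementary dimension, the projective intersection $[\mathscr{C}]\cap[\mathscr{V}^e]$ would be non-empty, producing a point $e+v$ with $v\in V\setminus\{0\}$ inside $\mathscr{N}$, which contradicts the surjectivity half of the Weierstrass section property (cf. Lemma \ref{3.1}(i)). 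Your route instead works locally at $e$: the isomorphism $e+V\iso\mathbb{A}^g$ forces $df_1(e),\dots,df_g(e)$ to be linearly independent, so $\dim T_e\mathscr{N}\le\dim\mathfrak{m}-g$, while Krull gives $\dim_e\mathscr{N}\ge\dim\mathfrak{m}-g$; the pinch shows $e$ is a regular point of $\mathscr{N}$, hence lies on a unique component of dimension $\dim\mathfrak{m}-g$, which by Theorem \ref{6.9.7} must be $\overline{B.\mathfrak{u}_{\pi'}}$. Both proofs use the same ingredients (Proposition \ref{6.9.2} to place $e$ in $\mathfrak{u}_{\pi'}$, Theorem \ref{6.9.7} for the dimension, and the Weierstrass property), but the smoothness argument is more local, avoids projectivization, and yields as a bonus the fact that $\mathscr{N}$ is smooth at $e$ and that $\overline{B.\mathfrak{u}_{\pi'}}$ is the unique component through $e$ --- information not made explicit by the paper's contradiction argument. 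The only small care needed is the one you already take: $T_e\mathscr{N}\subseteq\ker d\pi_e$ because $I(\mathscr{N})\supseteq(f_1,\dots,f_g)$, and the Hauptidealsatz applies to the set-theoretic locus $V(f_1,\dots,f_g)=\mathscr{N}$.
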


  \begin {proof}  Since  $\overline {B.\mathfrak u_{\pi'}}$ is closed and irreducible, it is either a component of the nilfibre or a proper subvariety of a component, say $\mathscr C$ of $\mathscr N$. In the latter case by Theorem \ref {6.9.7} one has $\dim [\mathscr C] > \dim [\mathfrak m]-g$.  On the other hand $\dim [\mathscr V^e] =g-1$.  Then by the intersection theory of closed projective subvarieties in projective space \cite [Chap. 1, Section 5, Thm. 6] {Sh}, we conclude that $[\mathscr C] \cap [\mathscr V^e]$ is non-empty.  Translated back to affine space this means that $\mathscr C$ contains an element of  the form $e+v:v \in V\setminus \{0\}$.
  On the other hand by definition of a Weierstrass section, there exists a homogeneous $P'$  invariant $f$ of positive degree such that $f(e+v)$ is a non-zero scalar. (In the present particular case $f$ can be taken to be some Benlolo-Sanderson minor.)  This contradicts that $e+v$ belongs to the nilfibre.  Hence the required assertion.
  \end {proof}

  \textbf{Remark}.  Thus to describe the component $\mathscr C$ of $\mathscr N$ containing, we again \textit{need} the properties of the  Weierstrass section $e+V$, with $e \in \mathscr C$.

  \subsubsection {A Criterion for being Lagrangian} \label {6.9.9}

  One can ask if  $\overline {B.\mathfrak u_{\pi'}}$ for some $w \in W$ and hence involutive, so Lagrangian \cite [Lemma 7.5]{J0}.  By definition a sufficient condition is that $\mathfrak u_{\pi'}$ be complemented in $\mathfrak n$ by a subalgebra, that is of the form
  $\overline{B.\mathfrak n \cap w(\mathfrak n)}$, for some $w \in W$.  However this is not necessary.  A more convenient is fact in the present context is the following

\begin {lemma} Let $\mathscr V$ be a closed irreducible subvariety of $\mathfrak n$.  Then $\mathscr V$ is an orbital variety closure if and only if $\dim \mathscr V =\frac{1}{2} \dim G\mathscr V$.
\end {lemma}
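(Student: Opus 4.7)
The plan is to leverage the Spaltenstein--Steinberg dimension equality (already invoked as \cite[2.3.1, Eq.(1)]{FJ}) which says that for any nilpotent $G$-orbit $\mathscr O$ in $\mathfrak g$, every irreducible component of $\mathscr O \cap \mathfrak n$ has dimension exactly $\frac{1}{2}\dim \mathscr O$. Both directions of the equivalence will follow by moving between an irreducible subvariety of $\mathfrak n$ and the unique dense $G$-orbit in its $G$-saturation.

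For the forward implication, I would write $\mathscr V = \overline{\mathscr V^\circ}$ for an irreducible component $\mathscr V^\circ$ of $\mathscr O \cap \mathfrak n$. On the one hand $\mathscr V \subset \overline{\mathscr O}$ gives $\overline{G\mathscr V} \subset \overline{\mathscr O}$, while on the other hand $\mathscr V^\circ \cap \mathscr O$ is non-empty, forcing $\mathscr O \subset G\mathscr V$ and hence the reverse inclusion. The Spaltenstein--Steinberg equality then yields $\dim \mathscr V = \dim \mathscr V^\circ = \tfrac{1}{2}\dim \mathscr O = \tfrac{1}{2}\dim G\mathscr V$.

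For the reverse implication, assume $\dim \mathscr V = \tfrac{1}{2}\dim G\mathscr V$. Since $\mathscr V$ is irreducible, so is $\overline{G\mathscr V}$; being closed, $G$-stable, and contained in the nilpotent cone, it must coincide with $\overline{\mathscr O}$ for a unique nilpotent $G$-orbit $\mathscr O$. The generic locus of $\mathscr V$ on which the $G$-orbit attains maximal dimension is open, non-empty, and must meet $\mathscr O$; otherwise $G\mathscr V$ would be contained in the proper closed subset $\overline{\mathscr O}\setminus \mathscr O$, contradicting $\overline{G\mathscr V}=\overline{\mathscr O}$. Hence $\mathscr V \cap \mathscr O$ is a non-empty open, and therefore dense, subset of the irreducible variety $\mathscr V$, with $\dim(\mathscr V \cap \mathscr O) = \dim \mathscr V = \tfrac{1}{2}\dim \mathscr O$. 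Being irreducible, $\mathscr V \cap \mathscr O$ lies in a unique irreducible component $\mathscr V^\circ$ of $\mathscr O \cap \mathfrak n$, and by Spaltenstein--Steinberg $\dim \mathscr V^\circ = \tfrac{1}{2}\dim \mathscr O = \dim(\mathscr V \cap \mathscr O)$, so $\mathscr V \cap \mathscr O$ is dense in $\mathscr V^\circ$ as well. Taking closures in $\mathfrak n$ identifies $\mathscr V = \overline{\mathscr V \cap \mathscr O} = \overline{\mathscr V^\circ}$, exhibiting $\mathscr V$ as an orbital variety closure.

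The only delicate step is the non-emptiness of $\mathscr V \cap \mathscr O$ and the accompanying identification $\overline{G\mathscr V}=\overline{\mathscr O}$; both are routine once one uses that $\mathscr V$ is irreducible and that the nilcone is stratified by finitely many $G$-orbits. Everything else is bookkeeping with dimensions using Spaltenstein--Steinberg, so I do not expect any substantive obstacle.
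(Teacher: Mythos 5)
Your proof is correct and follows essentially the same route as the paper's: both directions rest on Spaltenstein's equidimensionality theorem together with the identification of $\overline{G\mathscr V}$ as the closure of its unique dense nilpotent orbit $\mathscr O$. The paper's converse is slightly terser, concluding directly that $\mathscr V$ is a maximal-dimensional component of $\overline{\mathscr O}\cap\mathfrak n$; your version spells out the intermediate step that $\mathscr V\cap\mathscr O$ is a dense open subset of $\mathscr V$ contained in (and dense in) a single component of $\mathscr O\cap\mathfrak n$, which is the same mechanism made explicit.
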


\begin {proof}  Identifying $\mathfrak g^*$ with $\mathfrak g$ through the Killing form, let $\mathscr O \subset \mathfrak g$ be a nilpotent orbit.  After Spaltenstein \cite {S} the intersection $\mathscr O \cap \mathfrak n$ is equidimensional of dimension $\frac{1}{2} \dim \mathscr O$. By say \cite [Thm.2.1]{J0} every component of this intersection takes the form
  $\overline{B.\mathfrak n \cap w(\mathfrak n)}$, for some $w \in W$, that is an orbital variety closure.   From this, only if, is immediate.  Conversely $\overline{G\mathscr V}$ is irreducible, and a union of necessarily finitely many nilpotent orbits. Thus it admits a unique dense orbit $\mathscr O$. Trivially $\overline{\mathscr O}\cap \mathfrak n \supset  \overline{\mathscr O} \cap \mathscr V \supset \mathscr V$. Thus by dimensionality, $\mathscr V$ is a component of the first intersection, hence an orbital variety closure.
\end {proof}

\textbf{Example} Let $P=B$ be the Borel in $SL(3)$.  Then $e$ (resp. $V$) given by the construction of \ref {5.4} is $x_{1,3}$ (resp. the sum of simple root subspaces). Then $\mathfrak u_{\pi'}=\mathbb Cx_{1.3}$ is not complimented in $\mathfrak n$ by a subalgebra.  Moreover it coincides with $\overline{B.\mathfrak u_{\pi'}}$ so has dimension $1$ whilst $\overline{G.\mathfrak u_{\pi'}}$ is the closure of the subregular orbit and has dimension $4$.  Thus $\overline{B.\mathfrak u_{\pi'}}$ is not an orbital variety closure.
   \subsection {VS Pairs} \label {6.10}

  \subsubsection {Recollection of Conventions} \label {6.10.1}

The construction of \ref {5.4} gives lines $\ell_{i,j}$ (resp. $\ell_{r,s}$) labelled by a $1$ (resp. $\ast$).  They define the support of $e$ (resp. $V$) and denoted by  $\supp {e}$ (resp. $\supp {V}$), which gives the Weierstrass section $e+V$.

Thus by definition $x_{i,j}$ (resp. $x_{r,s}$) lies in  $\supp {e}$ (resp. $\supp {V}$) exactly when $\ell_{i,j}$ (resp. $\ell_{r,s}$) is labelled by a $1$ (resp. $\ast$). The entries of $\textbf{M}$ are simialarly defined and so we can view the affine variety $e+V$ as being given by taking $1$ to be in the $(i,j)$ entry and the subspace $\mathbb Cx_{r,s}$ in the $(r,s)$ entry.

\subsubsection {VS Pairs, VS Quadruplets} \label {6.10.2}

A pair $x_{i,j},x_{k,\ell} \in \supp {e}$, with $j \neq k$, is called a VS pair if $x_{i,k}, x_{j,\ell} \notin \supp{V}$.  Then $x_{j,k}$ is a root vector of $\mathfrak p$,
called the connecting element of the VS pair.

Note that we already have $i<j,k<\ell$.

A VS pair  $x_{i,j},x_{k,\ell}$ gives a quadruplet $(i,j,k,\ell)$, which we call a VS quadruplet if $(j,k) \in \supp {V}$.  Notice the number of VS  quadruplets is at most the number of $\ast$'s, that is $g$.

\subsubsection {Bad VS Pairs} \label {6.10.3}

Let $x_{i,j},x_{k,\ell}$ be a VS pair (occurring in $e$). It can happen that there is a co-ordinate $x_{r,\ell}:r\neq k$ (resp. $x_{i,s}:s \neq j$) in $e$ so that the action of $\mathfrak p$ gives $x_{j,\ell}$ (resp. $x_{i,k}$) in such a manner that one of these co-ordinates is recovered in a second fashion.  When this does \textit{not} happen, we say that $x_{i,j},x_{k,\ell}$ is a bad VS pair. A key result in classifying bad VS pairs is the following.

\begin {thm} Let $x_{i,j},x_{k,\ell}$ be a bad VS pair.  Then the  connecting element $x_{j,k}$ comes from a line in $\mathscr T_\mathfrak m$ carrying a $*$.  In particular $(i,j,k,\ell)$ is a VS quadruplet.
\end {thm}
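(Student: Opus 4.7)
The plan is to argue by contraposition. I will show that whenever $(x_{i,j}, x_{k,\ell})$ is a VS pair whose connecting element $x_{j,k}$ is \emph{not} represented by a line in $\mathscr T_\mathfrak m$ carrying a $\ast$, one can exhibit a second coordinate $x_{r,\ell}\in \supp e$ (with $r\neq k$) whose commutator with some root vector of $\mathfrak p$ equals $x_{j,\ell}$, or dually produce an $x_{i,s}\in \supp e$ ($s\neq j$) recovering $x_{i,k}$. Either alternative shows the VS pair fails to be bad, which is the desired contrapositive; the clause ``in particular $(i,j,k,\ell)$ is a VS quadruplet'' is then just the statement that $(j,k)\in\supp V$, hence immediate.

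The argument splits into three cases for $x_{j,k}$. Case (A): $j,k$ lie in the same column of $\mathscr T_\mathfrak m$, so $x_{j,k}$ is a Levi root vector (positive or negative). Case (B): $j,k$ lie in different columns with $j<k$, and the line $\ell_{j,k}$ carries a $1$. Case (C): $j,k$ lie in different columns with $j<k$, but the construction of \ref{5.4} assigns no label to $\ell_{j,k}$ at all. In each case, the goal is to exploit the structural properties of the construction developed in Section \ref{5}, most importantly Lemma \ref{5.4.8} parts (iv)--(vi), the adjacency property of \ref{5.4.3}, the minimal distance criterion of \ref{5.4.7}, the sandwiching Lemma \ref{6.5.3}, and the exceptional case of Corollary \ref{6.5.4}, in order to locate a $1$-line ending in the column containing $\ell$ or starting at the box containing $i$ that supplies the duplication. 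For instance, in Case (A) with $j<k$ one can chase right-going $1$-lines emanating from the successive boxes of the common column by repeated application of Lemma \ref{5.4.8}(v); in Case (B), the fact that $x_{j,k}\in\supp e$ together with $x_{k,\ell}\in\supp e$ forces, via Lemma \ref{5.4.8}(vi), a propagation of $1$-lines which we can read off to produce the alternative path.

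The main obstacle will be Case (C), where no label has been assigned to $\ell_{j,k}$: one must show that this very absence already forces a nearby labelled $1$-line, reachable from $j$ or ending at $\ell$, which supplies the required second recovery. This requires a careful tracking of the inductive stages \ref{5.4.4}--\ref{5.4.7}, and in particular of how up-going linkage, non-overlapping of ungated lines, and the rightmost labelling of step~$2$ conspire to force a right-going $1$-line out of $j$ (or a left-going $1$-line into $\ell$) whenever $\ell_{j,k}$ is absent. This is precisely the delicate combinatorial point that the authors defer to a subsequent paper. A complete proof must also respect the symmetry between recovering $x_{i,k}$ and $x_{j,\ell}$, since for the pair to fail to be bad it suffices that \emph{one} of the two be recoverable in a second manner; thus the case analysis can be asymmetric, which both simplifies and complicates the bookkeeping.
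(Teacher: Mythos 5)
The paper does not actually contain a proof of this theorem. It is stated in \ref{6.10.3} explicitly as a result to be established in a subsequent paper (see the Remarks following the statement, which reference \cite[4.4.5]{FJ2}), and the Introduction \ref{1.8} warns that ``For the moment we just give without proof an elegant and difficult result in this direction (Theorem \ref{6.10.3}). It will serve as a guide.'' So there is no paper proof against which to compare your approach.

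As for your proposal on its own terms: the contrapositive strategy and the three-way case split on the nature of $x_{j,k}$ (Levi root vector; $\ell_{j,k}$ drawn and carrying a $1$; $\ell_{j,k}$ not drawn at all) are reasonable and do exhaust the possibilities given that the statement's hypothesis excludes the $\ast$-labelled case. The observation that the final clause is immediate from the definition of a VS quadruplet is also correct. But what you have written is a plan, not a proof. In Cases (A) and (B) you gesture at ``chasing $1$-lines'' via Lemma \ref{5.4.8}(v),(vi) without actually exhibiting the second recovery of $x_{i,k}$ or $x_{j,\ell}$, verifying that the new commutator genuinely uses a \emph{different} element of $\mathfrak p$, or confirming that it is not itself obstructed by another simultaneous demand on the same root vector (the very subtlety that the Example of \ref{6.10.3} — the composition $(2,1,2,1,2)$ — is there to illustrate, where $x_{4,6}$ seems available but is already spoken for). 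And Case (C) — which you correctly identify as the crux — is left entirely open; you say yourself it ``is precisely the delicate combinatorial point that the authors defer to a subsequent paper.'' A proposal that concedes the decisive case is unresolved has not proved the theorem. The genuine gap, then, is that the argument is missing precisely where the paper itself says the work lies.
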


\textbf{Remarks.}  This result will be proved in subsequent paper \cite [4.4.5]{FJ2}.   We might remark that the notion of bad VS pair is not too precisely formulated above and needs for that an induction of columns.  A VS quadruplet associated to a bad VS pair is called a bad VS quadruplet.  A VS quadruplet which is not bad, will be called good.  For example see \ref {6.10.7}.  Classifying the bad VS quadruplets seems to be a difficult problem.

\textbf{Example.}  Consider the parabolic defined by the composition $(2,1,2,1,2)$ and see Figures $10,11$.  It has just one VS quadruplet, namely $(3,4,6,7)$.  It might be thought from the above discussion that it is good quadruplet, because one element of the pair $x_{3,6},x_{4,7}$, namely $x_{4,7}$ can be obtained by the action of $x_{4,6}$ on $x_{6,7}$.  However $x_{4,6}$ is also needed to obtain $x_{3,6}$ from $x_{3,4}$. In fact we need to get the three dimensional space spanned by the co-ordinates with labels $(1,5),(3,6),(4,7)$ by the action of the two dimensional space spanned by the co-ordinates with labels $(3,5),(4,6)$ on the two VS pairs $x_{1,3},x_{5,6}$ and $x_{3,4},x_{6,7}$. Then the additional VS element required is $x_{4,7}$.

% Had there been no bad quadruplets, then $e$ defined by the construction of \ref {5.4} would be regu;lar

\subsubsection {VS Quadruplets} \label {6.10.4}

Let $e_{VS}$ be defined by adjoining the co-ordinate vector $x_{j,l}$ to $e$, for every  $(i,j,k,l)$ that is a VS quadruplet.    In a subsequent paper \cite [4.5.2,4.5.3]{FJ2}, we shall prove the following extension of \ref {6.9.2}.  The result holds if we only adjoin such vectors for bad VS quadruplets, for all VS quadruplets, or for anything in-between.  However (ii),(iii) can fail if we omit vectors corresponding to some bad quadruplets (because we need \cite [Lemma 4.4.4]{FJ2} to hold) and in particular  it fails if we take $E_{VS}=E$, except when $e$ is regular.

\begin {prop}

\

$(i)$. $E_{VS} \subset \mathfrak u_{\pi'}$.

\

$(ii)$. $\dim P.E_{VS} = \dim \mathfrak m-g$.

\

$(iii)$. $\overline{P.E_{VS}}=\overline {B.\mathfrak u_{\pi'}}$.

\end {prop}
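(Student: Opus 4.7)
The plan is to derive all three statements from Theorem \ref{6.10.3} (characterisation of bad VS pairs), Proposition \ref{6.9.2} (the inclusion $E\subset\mathfrak u_{\pi'}$), Theorem \ref{6.9.7} (the dimension equality $\dim\overline{B.\mathfrak u_{\pi'}}=\dim\mathfrak m-g$), Corollary \ref{6.9.8} (that this closure is a component of $\mathscr N$), and the $P$-stability of $\overline{B.\mathfrak u_{\pi'}}$ given by Lemma \ref{2.7}. With these in hand, (ii) and (iii) are comparatively formal; the combinatorial burden is concentrated in (i).

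For (i), Proposition \ref{6.9.2} already gives $x_{i,j},x_{k,\ell}\in\mathfrak u_{\pi'}$ for each VS quadruplet $(i,j,k,\ell)$, so what must be shown is $x_{j,\ell}\in\mathfrak u_{\pi'}$. I would argue by induction on the number of columns of $\mathscr D$, using Lemma \ref{6.5.1} to strip the rightmost column and reduce to quadruplets whose outer vector $x_{k,\ell}$ belongs to the last column block. Writing $\mathfrak u_{\pi'}=\bigcap_m\mathfrak u_m$, it suffices to verify $\alpha_{j,\ell}\notin S(w_m^{-1})$ for each pair of neighbouring columns indexing $\mathfrak u_m$. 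Applying Lemma \ref{2.5} and the explicit description of the excluded roots in \ref{2.7}, the argument comes down to locating the positions of $j$ and $\ell$ in the word form of $w_m$. Theorem \ref{6.10.3} is crucial here: the connecting element $x_{j,k}$ comes from an $\ast$-line, which by Corollary \ref{6.9.3} encircles a specific position of $\textbf{M}$ whose row/column combinatorics force $j$ to precede $\ell$ in the word form of every relevant $w_m$, giving the claim. This is the combinatorial heart of the argument and is exactly the content deferred to \cite[4.4.5, 4.5.2]{FJ2}.

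For (ii), the upper bound is immediate from (i), since $P.E_{VS}\subset P.\mathfrak u_{\pi'}=B.\mathfrak u_{\pi'}$ (using $P$-stability of $\overline{B.\mathfrak u_{\pi'}}$ from Lemma \ref{2.7}) and $\dim\overline{B.\mathfrak u_{\pi'}}=\dim\mathfrak m-g$ by Theorem \ref{6.9.7}. For the matching lower bound I would re-run the proof of Theorem \ref{6.9.7}, namely the Claim that one can set up a bijection between elements of $Z$ and one-parameter subgroups of $B$ whose action at $e$ recovers $\textbf{Z}$ modulo lower columns. The only place that argument could break is at a bad VS pair, where a single connecting element $x_{j,k}\in\mathfrak n$ is needed twice to recover two distinct elements of $\textbf{Z}$. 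Adjoining $x_{j,\ell}$ to obtain $e_{VS}$ cures this: the second application of $x_{j,k}$ is no longer required because $[x_{j,k},x_{k,\ell}]$ is now a scalar multiple of the already-present summand $x_{j,\ell}$ of $e_{VS}$, and the moding-out proceeds without collision. The bijection then goes through verbatim, giving $\dim P.E_{VS}\geq\dim\mathfrak m-g$.

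For (iii), part (i) combined with Lemma \ref{2.7} yields $\overline{P.E_{VS}}\subset\overline{B.\mathfrak u_{\pi'}}$. The closure on the right is irreducible (being the closure of the image of a vector space under the irreducible group $B$) and of dimension $\dim\mathfrak m-g$, while by (ii) the closure on the left has the same dimension. Thus the two irreducible closed subvarieties coincide. The main obstacle in the whole programme is (i): the case analysis showing that for every bad VS quadruplet the adjoined right-hand vector $x_{j,\ell}$ evades all the excluded-root sets $S(w_m^{-1})$ simultaneously. This is where the specific features of the maximal gating construction in \ref{5.4} and the good behaviour on removal of the last column (Lemma \ref{5.4.9}) are essential, and is the reason the authors defer the complete proof to the subsequent paper.
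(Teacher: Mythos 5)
Note first that the paper does not prove Proposition \ref{6.10.4} in the present text: the sentence immediately preceding the statement defers the proof to the subsequent paper \cite[4.5.2, 4.5.3]{FJ2}, and no proof block follows. The only in-paper evidence is the digression of \ref{6.10.7}, which verifies the three assertions by hand for the single example needed there. The template appearing there is: (i) is checked directly; the upper bound in (ii) follows from $P$-stability of $\overline{B.\mathfrak u_{\pi'}}$ (Corollary \ref{6.9.8}) together with Theorem \ref{6.9.7}; the lower bound comes from the identity $\mathfrak p.e+E_{VS}+V=\mathfrak m$ followed by the standard argument of \cite[Lemma 8.1.1]{D}; and (iii) is then formal. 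Your sketch of (i) and your (iii) match this template.

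There is, however, a gap in your lower bound for (ii). You propose to re-run the Claim in Theorem \ref{6.9.7}, which produces $\dim\textbf{Z}$ tangent directions of the form $[x_{k,l},x_{i,k}]$ with $x_{i,k}$ a summand of $e$. That argument succeeds for $B.\mathfrak u_{\pi'}$ because the tangent space at $e$ is $\mathfrak b.e+\mathfrak u_{\pi'}$, and the large ambient space $\mathfrak u_{\pi'}$ (of dimension $\dim\mathfrak m-g-\dim\textbf{Z}$) absorbs all unwanted commutator terms in the modding-out. For $P.E_{VS}$ the relevant tangent space at a generic $e_{VS}$ is $\mathfrak p.e_{VS}+E_{VS}$, and $E_{VS}$ is far smaller than $\mathfrak u_{\pi'}$; it cannot play the same absorbing role. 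A straight re-run of the bijection would give at best $\dim E_{VS}+\dim\textbf{Z}$, which is in general well short of $\dim\mathfrak m-g$. What is actually required (and what the paper's equation $(2)$ in \ref{6.10.7} encodes) is the much stronger identity $\mathfrak p.e+E_{VS}+V=\mathfrak m$, where $\mathfrak p.e$ must cover not merely $\textbf{Z}$ but nearly all of $\mathfrak u_{\pi'}$; this rests on the near-density of $\mathfrak p.e$ in $\mathfrak m$ descending from the Richardson element of step $1$ (\ref{5.3}) and the gating construction, not just on Lemma \ref{6.9.6}. Your observation that adjoining $x_{j,\ell}$ cures the double use of the connecting element $x_{j,k}$ is the correct intuition for why this identity survives the passage from $E$ to $E_{VS}$, but it does not substitute for the missing density input, and as written your argument computes the wrong tangent space.
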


\subsubsection {Towards an example of the absence of regular orbits in $\mathscr N$} \label {6.10.5}

\

Take $h \in \mathfrak h$ and $M$ an $h$ invariant subspace of $\mathfrak m$ (for example $V$).  For all $c \in \mathbb C$, set $M_c = \{ m \in M|h.m=cm\}$.  In particular $\mathfrak p_0$ is the $h$ zero weight subspace of $\mathfrak p$.
View $e_{VS}$ as an element of $E_{VS}$ in \textit{general position}.

%Let $\mathfrak p_{\neq}$ denote the span of the root vectors of $\mathfrak p$.

\begin {lemma}
%Recall that $\overline{B.\mathfrak u_{\pi'}} \in \mathscr N$.

 Suppose $\overline{B.\mathfrak u_{\pi'}}$ admits a regular element $e'$.  Assume there exists $h \in \mathfrak h$ such that $E_{VS}=(E_{VS}){_{-1}}$
  and that $V_{-1}=0$.
  Then $\mathfrak p_0.e_{VS} = \mathfrak m_{-1}$.
\end {lemma}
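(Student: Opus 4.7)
The overall strategy is to establish the decomposition $\mathfrak p.e_{VS}\oplus V=\mathfrak m$ and then read off the conclusion from the $h$-grading combined with the hypothesis $V_{-1}=0$. The containment $\mathfrak p_0.e_{VS}\subseteq \mathfrak m_{-1}$ is immediate from $h.e_{VS}=-e_{VS}$, which follows from $E_{VS}=(E_{VS})_{-1}$: for $x\in\mathfrak p_0$ one has $h.(x.e_{VS})=[h,x].e_{VS}+x.(h.e_{VS})=-x.e_{VS}$, so only the reverse containment requires work, and by dimension it suffices to produce the claimed direct-sum decomposition of $\mathfrak m$.

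My first substantive step is to show that a generic $e_{VS}\in E_{VS}$ is regular in $\mathscr C:=\overline{B.\mathfrak u_{\pi'}}$. By Proposition \ref{6.10.4}(iii) one has $\overline{P.E_{VS}}=\mathscr C$, so the constructible set $P.E_{VS}$ contains a dense open of $\mathscr C$ and in particular meets the unique dense orbit $P.e'$ supplied by the hypothesis on $e'$. Hence $e'=p_0.e_0$ for some $p_0\in P$ and $e_0\in E_{VS}$, and then $P.e_0=P.e'$ is dense in $\mathscr C$, so $e_0$ is regular. By lower semicontinuity of orbit dimension, regularity defines a nonempty Zariski open subset of $E_{VS}$, hence holds at the generic element; Theorem \ref{6.9.7} then yields $\dim \mathfrak p.e_{VS}=\dim \mathscr C=\dim \mathfrak m-g$.

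The crux is the transversality claim $\mathfrak p.e_{VS}\cap V=\{0\}$. Suppose $v=x.e_{VS}\in V$ with $x\in\mathfrak p$. Because each generating invariant $f_i\in\mathbb C[\mathfrak m]^{P'}$ is a $P$-semi-invariant (with $P/P'$ acting by a character), the nilfibre $\mathscr N$ is $P$-stable, and the curve $\exp(tx).e_{VS}\subseteq P.e_{VS}\subseteq \mathscr C\subseteq\mathscr N$ makes each $f_i$ vanish identically along it; differentiating at $t=0$ yields $df_i|_{e_{VS}}(v)=0$ for every $i$. I then consider the linear map $\Phi_z:V\to\mathbb C^g$, $v\mapsto (df_i|_z(v))_i$, whose matrix entries are polynomial in $z\in \mathfrak m$. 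At $z=e$ the Weierstrass section property gives $f_i(e+w)=\xi_i(w)$ with $\{\xi_i\}_{i=1}^g$ a basis of $V^*$, so $\Phi_e$ is an isomorphism; since $e\in E_{VS}$ (because $E\subseteq E_{VS}$ by construction) and ``$\Phi_z$ invertible'' is a nonempty Zariski open condition on $z\in E_{VS}$, for a generic $e_{VS}$ the map $\Phi_{e_{VS}}$ is an isomorphism, forcing $v=0$.

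Combining these facts, $\mathfrak p.e_{VS}$ and $V$ are transverse subspaces of $\mathfrak m$ whose dimensions sum to $\dim \mathfrak m$, so $\mathfrak p.e_{VS}\oplus V=\mathfrak m$. Both summands are $h$-stable and $\mathfrak p_c.e_{VS}\subseteq \mathfrak m_{c-1}$, so the decomposition splits weight-by-weight as $\mathfrak p_c.e_{VS}\oplus V_{c-1}=\mathfrak m_{c-1}$; specialising to $c=0$ and invoking the hypothesis $V_{-1}=0$ delivers $\mathfrak p_0.e_{VS}=\mathfrak m_{-1}$. The principal obstacle is the transversality step, for which one must verify that the hypothesis ``$e_{VS}$ in general position'' indeed places it in the Zariski open locus where $\Phi_z$ is invertible; this reduces to the fact that $\Phi_e$ is invertible, which in turn is the content of the Weierstrass linearisation $f_i|_{e+V}=\xi_i$. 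A secondary delicate point is the $P$-stability (rather than merely $P'$-stability) of $\mathscr N$, needed to extract the derivative condition for arbitrary $x\in\mathfrak p$.
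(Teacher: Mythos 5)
Your proof is correct and reproduces the paper's overall strategy — replacing $e'$ by a generic element of $E_{VS}$ using the density of $P.E_{VS}$ in $\mathscr C$, establishing $\mathfrak p.e_{VS} \oplus V = \mathfrak m$, and passing to the $(-1)$ eigenspace. The genuine added value is your self-contained transversality argument for $\mathfrak p.e_{VS} \cap V = \{0\}$: the paper invokes Lemma~\ref{3.1}(ii) (which only covers the base point $e$ of the Weierstrass section) together with a citation to [FJ2, Prop.~4.5.4(ii)] for the generic $e_{VS}$, leaving the extension from $e$ to generic $z \in E_{VS}$ to a subsequent paper. Your replacement — observing that $P.e_{VS}\subset\mathscr N$ forces $df_i|_{e_{VS}}$ to vanish on $\mathfrak p.e_{VS}$, then introducing the Jacobian-type map $\Phi_z\colon V\to\mathbb C^g$, checking its invertibility at $z=e$ via the Weierstrass linearisation (and the implicit-function/Jacobian criterion for generators of $\mathbb C[V]$ vanishing at the origin), and propagating invertibility to generic $z\in E_{VS}$ because $\det\Phi_z$ is polynomial and $e\in E\subset E_{VS}$ — is clean, fills the gap, and is arguably preferable to the opaque forward reference. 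A small point worth making explicit: regularity of $e_{VS}$ and invertibility of $\Phi_{e_{VS}}$ are each nonempty open conditions on the irreducible space $E_{VS}$, so their conjunction is nonempty open; your phrase ``in general position'' has to be interpreted to meet both loci simultaneously, which is harmless but should be said. Otherwise the eigenspace bookkeeping at the end, using $E_{VS}=(E_{VS})_{-1}$ to get $\mathfrak p_c.e_{VS}\subseteq\mathfrak m_{c-1}$ and $V_{-1}=0$, is exactly the paper's final step.
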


\begin {proof} It is clear that $\overline {P.e'}=\overline{B.\mathfrak u_{\pi'}}=\overline {P.E_{VS}}$.  Indeed the first equality follows from the \textit{assumed} regularity of $e'$ and the second equality from assertion (iv) following Proposition \ref {6.10.4}.  In the example of \ref {6.10.7}, we can check (i)- (iii) by hand (see \ref {6.10.7}-digression) so as not to rely on Proposition \ref {6.10.4} which is proved in a subsequent paper in general \cite [Prop. 4.4.11]{FJ2}.

Now being the image of a morphism, $P.E_{VS}$ contains an open set in its closure \cite [Chap. 1, Sect. 6, Thm. 6]{Sh}. Since $P.E_{VS}$ is irreducible, the complement in its closure has strictly lower dimension.  Thus $P.e'$ cannot lie entirely in $\overline {P.E_{VS}} \setminus P.E_{VS}$.
  Thus $P.e' \cap P.E_{VS} \neq \phi$.  The former is a $P$ orbit and the latter a union of $P$ orbits generated by elements of $E_{VS}$.  Thus $P.e' =P.e''$ for some  $e'' \in E_{VS}$, so we can assume $e' \in E_{VS}$ without loss of generality, that is to say we can take $e'=e_{VS}$.  Consequently $\dim \mathfrak p.e_{VS}=\dim P.e_{VS}= \overline{B.\mathfrak u_{\pi'}}= \dim \mathfrak m - \dim V$.  Then by Lemma \ref {3.1}(ii) and \cite [Prop. 4.5.4(ii)]{FJ2}, we conclude that
$$\mathfrak p.e_{VS}\oplus V= \mathfrak m.$$

  Taking the $-1$ eigensubspace with respect to $h$, gives the assertion of the lemma.

\end {proof}

\subsubsection {\textbf{An Example with Good Quadruplets}} \label {6.10.6}

\

The existence of a cycle does \textit{not} guarantee the lack of a dense orbit if the VS quadruplets are chosen badly.  Thus consider the composition $(3,1,1,3,1,2)$.  This has two VS quadruplets namely $(1,4,5,6)$ and $(5,6,9,10)$.  Adjoining the lines $\ell_{4,6},\ell_{6,9}$ to $e$ to obtain $e_{VS}$ gives the cycle $(4,6,10,9,7,4)$.  This has an odd number of lines so we cannot even find an $h \in \mathfrak h$, to make $E_{VS}$ to be a $-1$ eigenvalue subspace for the action of $h$.

In this case one may check (by hand) that both quadruplets are good and that already $e$ is regular.

Thus if we want to show that there is no regular element in a given component of the nilfibre by using the fact that $\mathscr G$ has cycles, then we should at least cut down to bad VS quadruplets.  However this does \textbf{not} ensure that   we choose $h \in \mathfrak h$ such that $h$ has eigenvalue $-1$ on all the co-ordinate vectors in $E_{VS}$.  Rather for the moment this must be verified in each particular case.  On the other hand we shall not need to know that we have cut down to bad quadruplets.  What is more important is that we can choose $h\in \mathfrak h$ with the above eigenvalue property.

\subsubsection {An Example}\label {6.10.7}

We give an example for which the irreducible component $\overline {B.\mathfrak u_{\pi'}}$ has no regular $P$ orbit.  This will \textit{not} use Theorems \ref {6.10.3} and \ref {6.10.4} except as guides, since we can check these assertions for this particular case, by computation.

Take $\mathfrak g$ to be $\mathfrak {sl}(11)$  with the parabolic given by the composition $(2,1,1,2,1,1,2,1)$ and see Figures $8,9$.  One verifies that only $(1,3,6,7)$ and $(6,7,10,11)$ can be bad quadruplets.

Thus $e$ must be augmented by lines joining the pairs $(3,7)$ and $(7,11)$ to give $e_{VS}$.

Since in the present paper we have not proved Proposition \ref {6.10.4}, we make the following short digression to verify its needed conclusions  $(1),(2)$ below in the present special case.

\

\textbf{Digression}

One easily checks that in the present special case that $E_{VS} \subset \mathfrak u_{\pi'}$ in accordance with Theorem \ref {6.10.4}(i).

  Now by Corollary \ref {6.9.8}, $\overline {B.\mathfrak u_{\pi}}$ is a component of $\mathscr N$, so $P$ stable.  Thus we obtain
  $$P.E_{VS}\subset \overline {B. \mathfrak u_{\pi'}}. \eqno {(1)}$$

  In particular via \ref {6.9.7} that $\dim P.E_{VS} \leq \dim \mathfrak m-g$.

 Again in our present special case a straightforward computation shows that
  $$\mathfrak p.e +E_{VS} + V =\mathfrak m. \eqno {(2)}.$$

 Then a standard argument following p. 262 of \cite [Lemma 8.1.1]{D} gives  $\dim P.E_{VS} \geq \dim \mathfrak m-g$.  (For more details, see \cite [Prop. 4.4.11]{FJ2}. Hence (ii), (iii) of Proposition \ref {6.10.4}, in the present special case.

  \

   On the other hand the added pairs pairs $(3,7)$ and $(7,11)$ lead to the cycle $(3,7,11,10,3)$.  This has an even number of lines and \textit{taking care of signs}, we can even choose $h\in \mathfrak h$ to have eigenvalue $-1$ on each root vector occurring in $e_{VS}$.  Let see how general a choice of $h\in \mathfrak h$ can be made.

The lines carrying a $1$, joining the elements of $e$ come in three sets namely $1-3-10-11, \ 2-4-5-8-9, \ 6-7$.  To each such line, for example $\ell_{1,3}$, we assign $-1$ signifying that the corresponding root vector is given the eigenvalue $-1$. These sets define a root system of type $A_3\times A_4 \times A_1$.

 Adjoining $(1,2),(7,11)$ with appropriate signs, we obtain a system of type $A_{10}$.  Now already $(7,11)$ obtains from a VS pair so must assign to it the value $-1$.  \textit{ Rather fortuitously} this also assigns $-1$ to the line $(3,7)$. To the line $(1,2)$ we assign a rational number in general position.

 With this assignment the only roots which have $h$ eigenvalues which are integer are those in the second set above and those coming from the chain
 $1-3-10-11-7-6$.  Here the first $3$ lines carry a $-1$ and the last two a $1$ (because the order of the $\varepsilon$ indices have been reversed).

 From this assignment we may easily calculate, the roots which carry $0$.  They are $(3,6),(7,10)$.  Not surprisingly they are the connecting elements of the two bad VS quadruplets.  All lie outside the Levi factor.  Thus $\mathfrak h$ is complemented by a space $\mathfrak q_0$ in $\mathfrak p_0$ of dimension $2$.

 Similarly the roots coming from the chain $1-3-10-11-7-6$, taking the value $-1$  are $(1,6),(6,10);(6,7),(3,7),(1,3),(10,11),(3,10),(7,11)$ for which the corresponding root vectors lie in $\mathfrak m$. Of these the first two are new, the last six occur in $\supp e_{VS}$.  The space spanned by the first two is just the image under the action of the space $\mathfrak q_0$, spanned by the connecting elements, on $E_{VS}$.

 One may easily check that $V_{-1}$ is empty.

  We conclude that in order for the conclusion of Lemma \ref {6.10.5} to hold we need that $\mathfrak h.e_{VS}=E_{VS}$ or equivalently that $He_{VS}$ be dense in $E_{VS}$.

  Thus we have shown that the existence of a $P$ dense orbit in the component $\overline {B.\mathfrak u_{\pi'}}$ implies the existence of an $H$ dense orbit in $E_{VS}$.

  Yet (in the present case) the roots in $\supp e_{VS}$ form the cycle $(3,7,11,10,3)$ and so by Lemma \ref {6.4}, $E_{VS}$ cannot admit a dense $H$ orbit. Hence

\begin {lemma} For the parabolic defined by the composition $(2,1,1,2,1,1,2,1)$ there is a component of $\mathscr N$ with no dense $P$ orbit.

\end {lemma}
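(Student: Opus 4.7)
The plan is to specialise the general machinery of the paper to the composition $(2,1,1,2,1,1,2,1)$ and then apply Lemma 6.10.5 to rule out a dense $P$ orbit in the component $\overline{B.\mathfrak u_{\pi'}}$ of $\mathscr N$ (which is already known to be a component by Corollary 6.9.8). First I would run the construction of Section 5.4 by hand on this diagram to write down $e$, $V$, and the list of VS quadruplets; as the text indicates, the only candidates for bad quadruplets are $(1,3,6,7)$ and $(6,7,10,11)$, which share the middle pair $x_{6,7}$. I would then set $e_{VS}:=e+x_{3,7}+x_{7,11}$ and $E_{VS}$ its span.

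Since Proposition 6.10.4 is only proved in the sequel, the next step is to verify its conclusions by direct computation in this one case. Concretely I would check: (i) $E_{VS}\subset \mathfrak u_{\pi'}$, by comparing the roots of the added vectors against the explicit list of excluded roots from Section 2.7; (ii) $\mathfrak p.e+E_{VS}+V=\mathfrak m$, a finite matrix verification that, when combined with Corollary 6.9.8 and Theorem 6.9.7, forces $\dim P.E_{VS}=\dim \mathfrak m-g$ and hence $\overline{P.E_{VS}}=\overline{B.\mathfrak u_{\pi'}}$; this is precisely the substitute for Proposition 6.10.4 promised in the digression.

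Next I would produce a semisimple $h\in\mathfrak h$ such that every root vector appearing in $e_{VS}$ has $h$-eigenvalue $-1$ and $V_{-1}=0$. The recipe in the digression gives this: decompose the $1$-labelled lines of $e$ into the three chains $1{-}3{-}10{-}11$, $2{-}4{-}5{-}8{-}9$, $6{-}7$, assign weight $-1$ along each edge, extend to the type $A_{10}$ root system by assigning $-1$ to $(7,11)$ (which, happily, forces $-1$ on $(3,7)$ as well) and a generic rational value on $(1,2)$, and check directly that no root vector in $V$ then acquires weight $-1$. With this $h$ in hand, Lemma 6.10.5 becomes applicable: the existence of a regular element in $\overline{B.\mathfrak u_{\pi'}}$ would imply $\mathfrak p_0.e_{VS}=\mathfrak m_{-1}$, and inspection of $\mathfrak p_0$ (spanned by $\mathfrak h$ together with the connecting vectors $x_{3,6},x_{7,10}$ of the two quadruplets) shows that this in turn would force $H.e_{VS}$ to be dense in $E_{VS}$.

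To close, I would observe that the roots of the vectors in $\supp e_{VS}$ include the cycle $(3,7),(7,11),(11,10),(10,3)$ on the four vertices $3,7,10,11$, so by Lemma 6.4 these roots are linearly dependent; hence $H.e_{VS}$ is contained in a proper subvariety of $E_{VS}$ and cannot be dense, giving the desired contradiction. The main obstacle is the combinatorial bookkeeping in steps (i)--(ii) above, and in particular checking that a single $h$ simultaneously eigen-normalises all the root vectors of $e_{VS}$ and leaves $V_{-1}$ empty; the slight miracle that assigning $-1$ to $(7,11)$ automatically does the same to $(3,7)$ is exactly what makes the argument work for this composition, and is what should fail (or require a different choice) for compositions whose $E_{VS}$ is free of cycles.
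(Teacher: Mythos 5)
Your proposal follows the paper's own proof essentially step for step: the same choice of $e_{VS}$ by adjoining $x_{3,7}$ and $x_{7,11}$, the same digression replacing Proposition 6.10.4 with a hand verification of $E_{VS}\subset\mathfrak u_{\pi'}$ and $\mathfrak p.e+E_{VS}+V=\mathfrak m$, the same construction of $h$ via the three chains $1\text{-}3\text{-}10\text{-}11$, $2\text{-}4\text{-}5\text{-}8\text{-}9$, $6\text{-}7$ extended by $(7,11)$ (which fortuitously also fixes $(3,7)$) and a generic value on $(1,2)$, the same identification of $\mathfrak q_0$ as the span of the connecting vectors $x_{3,6},x_{7,10}$, and the same final appeal to Lemma 6.4 applied to the cycle $(3,7),(7,11),(11,10),(10,3)$. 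This matches the paper's argument in both structure and detail.
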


\textbf{Remark.} Of course Proposition \ref {6.10.4} will also hold if we inadvertently include some good VS pairs into the the definition of $E_{VS}$.  However as the example of \ref {6.10.6} shows this may introduce a cycle with an odd number of lines causing the present analysis to fail.

 \subsubsection {A Suggestion for the Set of Components of $\mathscr N$} \label {6.10.8}

 Notice that $\mathbb C[\mathfrak m]$ admits a linear invariant $p_l$  exactly when the diagram describing the Levi factor of $\mathfrak p$ admits two consecutive columns of height $1$.  Suppose that on both sides of these two columns  are columns of height $2$, but not necessarily immediately to the given side. This gives a further invariant generator, say $p$. Then putting $p_l =0$, creates in effect a third column of height $2$, between the given ones.  It follows that $p$ which was originally irreducible \cite [5.3]{FJ}, now factors by Lemma \ref {1.10}.  We suggest that this is the only factorisation of the invariants which can arise and that the resulting components are $B$ saturation sets.

 \textbf{Examples.}  Consider the composition $(2,1,1,2)$. As noted in \cite [5.4.1]{FJ}, there is a linear invariant generator $x_{3,4}$ modulo which the degree $4$ invariant generator factors as a product of $2\times 2$ minors.  The zero set of the bottom minor is exactly the component of $\mathscr N$ described in Corollary  \ref {6.9.8}  as a $B$ saturation set.  By an obvious symmetry one can deduce that the zero set of top minor is a similar $B$ saturation set.

 Consider the composition $(2,1,1,1,2)$.  In this case $\mathscr N$ has three components produced by a similar reasoning. One of these is that described by  Corollary  \ref {6.9.8}, the second by symmetry as above. The third ``middle'' one. is not so obviously a $B$ saturation set, but it is!

 Of course it is completely unobvious that this describes all the irreducible components of $\mathscr N$ in general. Here one may remark that $\mathfrak n \cap \mathscr O$ is equidimensional with components as in \ref {1.7} is a consequence of Bruhat decomposition applied to the Steinberg triple variety.  In this case behind equidimensionality was a disjoint union decomposition of open subsets, which for orbital varieties in type $A$ can be seen rather explicitly in Spaltenstein's original paper \cite {S0}.

 \subsubsection {Computation of Nilpotency Class providing Evidence of Non-regularity} \label {6.10.9}

 Recall $e \in \mathfrak m$ by \ref {5.4}. It belongs to the nilfibre $\mathscr N$, by Lemma \ref {6.1}.

 %
 %Suppose that $e$ is regular.  Then $\dim P.e = \dim \mathfrak m -g$.

   A result of Spaltenstein, for type $A$,  \cite [Last Cor.]{S0}, asserts that if $\mathscr O$ is a nilpotent orbit, then its intersection with the nilradical  $\mathfrak m$ of a parabolic, is equidimensional.  (In \cite {S} Spaltenstein showed  that the intersection of a nilpotent orbit $\mathscr O$ with the nilradical $\mathfrak n$ of the Borel is equidimensional of dimension $\frac{1}{2} \dim \mathscr O$, in all types.)

   Now $G.e \supset P.e$ and so $G.e \cap \mathfrak m \supset P.e$.  Thus $P.e$, being irreducible, is contained in an irreducible component of $G.e \cap \mathfrak m$.  On the other hand $G.e \cap \mathfrak n$ is equidimensional of dimension $\frac {1}{2} \dim G.e$. Its irreducible components are orbital variety closures.

   We conclude that $\dim P.e \leq \frac {1}{2} \dim G.e$. This can lead to a contradiction with the supposed regularity of $e$, which implies that $\dim P.e = \dim \mathfrak m -g$. Notice further that equality in the first equation implies that $\overline{P.e}$ is an orbital variety closure.

   \

   \textbf{Example 1}.  Let $P$ be defined by  the composition $(3,2,1,1,2,3)$.  The nilpotency class of $e$ is $(5,3,3,1)$.  Thus $\dim G.e = |\Delta|- (4.3+3.2+3.2)$, and so $\frac {1}{2} \dim G.e= \dim \mathfrak n - 12$.  On the other hand if $e$ is regular then $\dim P.e = \dim \mathfrak m-3$. Yet $\dim \mathfrak n -\dim \mathfrak m=8$, forcing $\frac {1}{2} \dim G.e = \dim P.e -1$, which is a contradiction.  Thus $e$ is \textit{not} regular.

   In this example $(4,6,9,11)$ is a bad quadruplet (and the only bad quadruplet).  Thus we must replace to $e$ by $e_{VS}:=e+x_{6,11}$. This element is regular because the extra root does not introduce a cycle\footnote{A general result in this direction will be proved in a subsequent paper.  in the present case the assertion can just be checked.}.  The nilpotency class of $G.e_{VS}$ is easily seen to be $(5,4,2,1)$ and we conclude that $\frac{1}{2} \dim G.e_{VS}= \dim \mathfrak n - 11$, which coincides with $\dim P.e_{VS}$.  In this case we conclude by Lemma \ref {6.9.9} that $\overline{P.e_{VS}}$ and hence   $\overline{B.\mathfrak u_{\pi'}}$ is an orbital variety closure.

   In this case the excluded roots in $\mathfrak u_{\pi'}$ are $\varepsilon_i-\varepsilon_{12}: i \in [1,9], \varepsilon_j-\varepsilon_{8}: j \in [4,7],\varepsilon_6-\varepsilon_7$, which form an additively closed subset of $\Delta^+$.  Thus not only is $\mathfrak u_{\pi'}$ a subalgebra of $\mathfrak n$, but so is its the $\mathfrak h$ stable compliment in $\mathfrak n$. Via \cite [Lemma 7.5]{J0}, this confirms that $\overline{ B.\mathfrak u_{\pi}}$ is an orbital variety closure.

   \

   \textbf{Example 2}.  Let $P$ be defined by  the composition $(1,2,2,1)$.  Then $e$ (resp. $V$) is defined by the co-ordinates $(1,2),(2,4),(5,6)$ (resp. $(4,6),(3,5))$.  One checks that $P.e$ is a dense orbit in $\overline{B.\mathfrak u_{\pi'}}$ and so has dimension $\dim \mathfrak m -2$.  On the other hand the nilpotency class of $e$ is $(3,2,1)$ and so $\frac{1}{2} \dim G.e = \dim \mathfrak n - 4= \dim \overline{B.\mathfrak u_{\pi'}}$.  In this case we conclude by Lemma \ref {6.9.9} that   $\overline{B.\mathfrak u_{\pi'}}$ is an orbital variety closure.  On the other hand the excluded co-ordinates in $\mathfrak u_{\pi'}$ are $(1,3),(2,5),(4,6),(3,5)$ and \textit{do not} form a subalgebra, so we would not have expected $\overline{B.\mathfrak u_{\pi'}}$ to be an orbital variety closure, but contrariwise it is!

   \

   \textbf{Example 3.} Let $P$ be defined by  the composition $(2,1,1,1,2)$.  Then $e$ has nilpotency class $(4,2,1)$, so $\frac{1}{2} \dim G.e= \dim \mathfrak n - 4 =\dim \mathfrak m-2$.  Yet $P.e$ is dense in $\overline{B.\mathfrak u_{\pi'}}$, which is itself of dimension $\dim \mathfrak m -3$.  Then by Lemma \ref {6.9.9}, $\overline{B.\mathfrak u_{\pi'}}$ is not an orbital variety closure,

   \

   \textbf{Example 4.}  Let $P$ be defined by  the composition $(2,1,1,2,1,1,2,1)$ as in \ref {6.10.7} (See Figures $8,9$.)  Then $e_{VS}$ has nilpotency class $(5,4,2)$, so $\frac{1}{2} \dim G.e_{VS}= \dim \mathfrak n - 8 =\dim \mathfrak m-5$.  Yet $P.e_{VS}$ has codimension $1$ in $\overline{B.\mathfrak u_{\pi'}}$, which is itself of dimension $\dim \mathfrak m -6$.  Then by Lemma \ref {6.9.9}, $\overline{B.\mathfrak u_{\pi'}}$ is not an orbital variety closure, nor is $\overline{P.e}$.

   \

\textbf{Example 5.} Consider the example of \ref {6.10.3}.  The nilpotency class of $e$ can be read off from Figure $10$ and is $(3,3,2)$.  Thus the right hand side of \ref {2.3}$(*)$ is $7$, so $\frac{1}{2} \dim G.e = \dim \mathfrak n-7$.  On the other hand if $e$ is regular, one obtains $\dim P.e = \dim \mathfrak m - g= \dim \mathfrak n -6 > \frac{1}{2} \dim G.e$, which is a contradiction.  Thus
$e$ is not regular, as we already suspected.  On the other hand the nilpotency class of $e_{VS}$ is $(4,2,2)$  and so $\frac{1}{2} \dim G.e_{VS} = \dim \mathfrak n-6=\dim P.e_{VS}$, given, as we can check, that $e_{VS}$ is regular. Thus $P.e_{VS}$ is dense in $\overline{B.\mathfrak u_{\pi'}}$, which is furthermore an orbital variety closure by this dimensionality estimate and Lemma \ref {6.9.9}.

   \section {Index of Notation }\label {7}

 Symbols used frequently are given below in the order in which they appear.

 \

 \ref {1}. \quad \ $\mathbb C, [1,n]$.

 \ref {1.1}. \ $G,H,B,\mathfrak g, \mathfrak h, \mathfrak b, \Delta, \Delta ^+, \pi, s _\alpha, W, P_{\pi'},L_{\pi'},M_{\pi'},P'$.

 \ref {1.2}. \ $\mathscr D_\mathfrak m$.

 \ref {1.3}.  \ $e+V$.

 \ref {1.5}.  \ $\mathscr N, (e,h)$.

 \ref {1.6}.  \ $\mathscr T_\mathfrak m, x_{i,j}. L(i,j)$.

 \ref {1.7}.  \ $B.\mathfrak u$.

 \ref {1.8}.  \  $e_{VS}, E_{VS}$.

 \ref {2.1}. \ $\textbf{M}_n, \alpha_{i,j}$.

 \ref {2.2}. \  $C_i,c_i,R_j,R^i$.

 \ref {2.3}. \  $W_{\pi'},w_{\pi'},\textbf{B}_i, \textbf{C}_i$.

 \ref {2.4}. \  $c_i^{\leq s},c_i^{>s},\mathscr F_\mathfrak m (n), \mathscr D_\mathfrak m (n),t_m(n)$.

 \ref {2.5}. \ $w_c(\mathscr T),S(w)$.

 \ref {2.6}. \  $\mathscr T^t, M_s, M_s(\mathfrak m), d(M_s(\mathfrak m))$.

 \ref {2.7}. \  $c_j^i,L^-$.

 \ref {2.8}. \  $\mathfrak u_{\pi'}$.

 \ref {3}. \quad \ $\mathscr N_{reg}$.

 \ref {5.4}. \ $b<b'$.

 \ref {5.4.4}. $b_i,b_i'$.

 \ref {5.4.6}. $b_j''$.

 \ref {6}.  \quad \ \ $\textbf{r}_u,\textbf{c}_v$.

 \ref {6.5.1}. $\hat{\mathscr C}, L(\hat{\mathscr C)}$.

 \ref {6.5.2}. $\hat{\mathscr S}$.

 \ref {6.6}. \  $b_{i,j}$.

 \ref {6.6.1}. $S, \textbf{S},C_k(s),\textbf{C}_k(s), \textbf{M}(s)$.

 \ref {6.6.2}. $\textbf{r}_{C,C'}$.

 \ref {6.9.2}. $X,Y,Z,\textbf{X},\textbf{Y}, \textbf{Z}$.

 \ref {6.9.3}.  $Z_{C,C'}, \mathscr R_j, \textbf{R}_j$.

 \ref {6.9.5}.  $[M]$.

 \ref {6.10.5}. $\mathfrak p_{\neq}$.

 \

\textbf{Acknowledgements.}

This work was the subject of a joint talk on Zoom at Bangaluru, India, 10-12 December 2020. We would like to thank Venkatesh Rajendran for the invitation to speak.  Our talk may be viewed on

\

https://www.youtube.com/watch?v=ALTQj0w2ADM

\

The first author was partly supported at the University of Haifa, arranged by Vladimir Hinich and Anna Melnikov and later at the Weizmann Institute, arranged by Ronen Basri and Gal Binyamini.

\

The authors would like to thank the referee for attentively reading the manuscript, which was no doubt a difficult task, and many pertinent suggestions.

\section*{Illustrations}

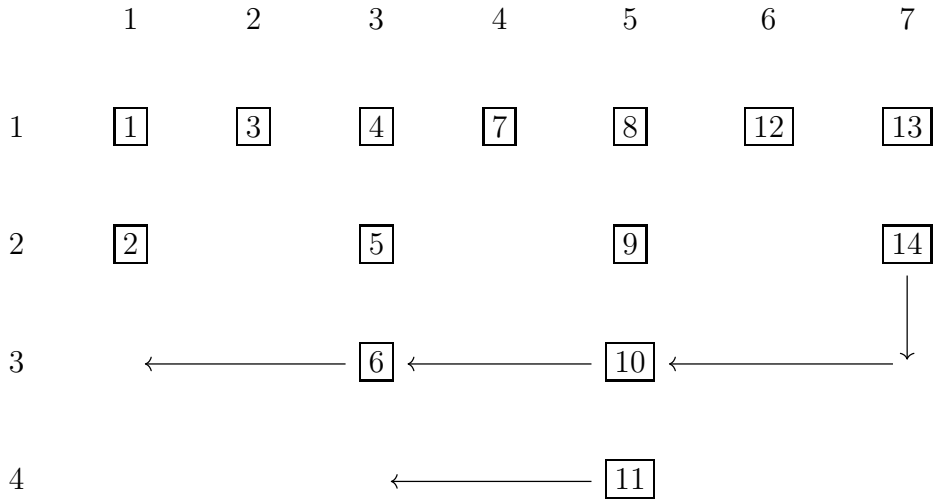
\begin{figure}[H]

\begin{center}
\begin{tikzcd}[row sep=1.8 em,%tiny,
column sep = 2 em]
  &1&2&3&4&5&6&7\\
1&\fbox{1}&\fbox{3}&\fbox{4}&\fbox{7}&\fbox{8}&\fbox{12}&\fbox{13}\\
2&\fbox{2}&&\fbox{5}&&\fbox{9}&&\fbox{14}%\arrow[->,lld, bend left=50 ]
\arrow[->,d, ]
\\
3&{ }&&\fbox{6}\arrow[->,ll,% bend left=50
]&&\fbox{10} \arrow[->,ll, %bend left=50
]&& { } \arrow[->,ll, ] \\
4&&& { }&&\fbox{11}\arrow[->,ll, %bend left=50
 ]&&  \\

\end{tikzcd}\\

\caption{The shifting of boxes according to the recipe of Section 2.4.} \label{fig1}

\end{center}
\end{figure}
\begin{figure}[H]
\begin{center}
\begin{tikzcd}[row sep=1.8 em,%tiny,
column sep = 2 em]
  &1&2&3&4&5&6&7\\
1&\fbox{1}&\fbox{3}&\fbox{4}&\fbox{7}&\fbox{8}&\fbox{12}&\fbox{13}\\
2&\fbox{2}&&\fbox{5}&&\fbox{9}&\\
3&\fbox{6}&&\fbox{10}&&\fbox{14}&\\
4&&&\fbox{11}&&&\\

\end{tikzcd}\\
{}
\caption{The new columns and the resulting Weyl group element in word form: $w=(6,2,1,3,11,10,5,4,7,14,9,8,12)$} \label{fig2}
\end{center}
\end{figure}

  Minimal versus maximal gating for the parabolic define by the composition $(3,1,2,2,1,3)$.
\begin{figure}[H]
\begin{center}
\begin{tikzcd}[row sep=1.8 em,%tiny,
column sep = 2 em]
\fbox{1}\arrow[-,r,"1"]&\fbox{4}\arrow[-,r,"1"]&\fbox{5}\arrow[-,r,"1"]&\fbox{7}\arrow[-,r,"*"]
&\fbox{9}\arrow[-,r,"1"]&\fbox{10}\\
\fbox{2}\arrow[-,rr,"1"]& &\fbox{6}\arrow[-,r,"*"]&\fbox{8}\arrow[-,rr,"1"]
& &\fbox{11}\\
\fbox{3}\arrow[-,rrrrr,"*"]& &&
& &\fbox{12}\\
\end{tikzcd}\\
\caption{Step $2$}\label{fig8}

\end{center}
\end{figure}
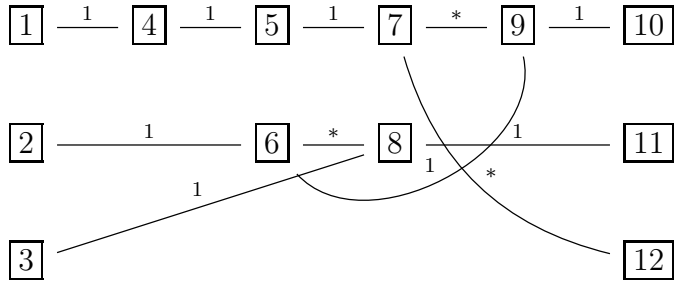
\begin{figure}[H]
\begin{center}
\begin{tikzcd}[row sep=1.8 em,%tiny,
column sep = 2 em]
\fbox{1}\arrow[-,r,"1"]&\fbox{4}\arrow[-,r,"1"]&\fbox{5}\arrow[-,r,"1"]&\fbox{7}\arrow[-,r,"*"]\arrow[-,rrdd,"*",bend right= 30]
&\fbox{9}\arrow[-,r,"1"]&\fbox{10}\\
\fbox{2}\arrow[-,rr,"1"]& &\fbox{6}\arrow[-,r,"*"] \arrow[-,rru,"1",bend right= 75]&\fbox{8}\arrow[-,rr,"1"]
& &\fbox{11}\\
\fbox{3}\arrow[-,rrru,"1"]& &&
& &\fbox{12}\\
\end{tikzcd}\\
\caption{Step $3$ after minimal gating.}\label{fig9}
\end{center}
\end{figure}
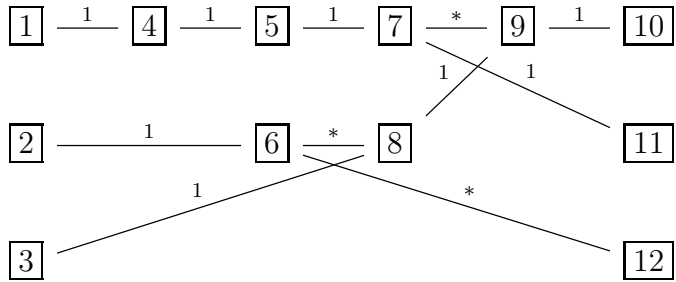
\begin{figure}[H]
\begin{center}
\begin{tikzcd}[row sep=1.8 em,%tiny,
column sep = 2 em]
\fbox{1}\arrow[-,r,"1"]&\fbox{4}\arrow[-,r,"1"]&\fbox{5}\arrow[-,r,"1"]&\fbox{7}\arrow[-,r,"*"]\arrow[-,rrd,"1"]
&\fbox{9}\arrow[-,r,"1"]&\fbox{10}\\
\fbox{2}\arrow[-,rr,"1"]& &\fbox{6}\arrow[-,r,"*"]\arrow[-,rrrd,"*"]&\fbox{8}\arrow[-,ru,"1"]
& &\fbox{11}\\
\fbox{3}\arrow[-,rrru,"1"]& &&
& &\fbox{12}\\
\end{tikzcd}\\
\caption{Step $3$ after maximal gating.}\label{fig10}
\end{center}
\end{figure}

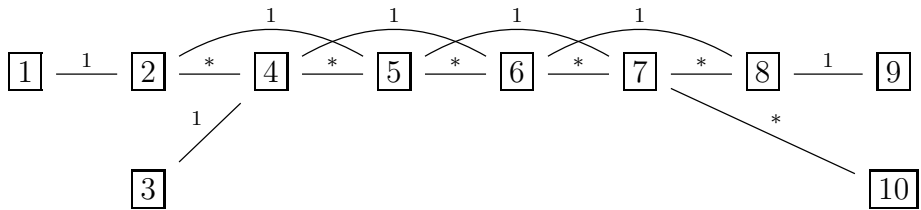
\begin{figure}[H]
\begin{center}
\begin{tikzcd}[row sep=1.8 em,%tiny,
column sep = 2 em]
\fbox{1}\arrow[-,r,"1"]&\fbox{2}\arrow[-,r,"*"]\arrow[-,rr,"1",bend left= 30]&\fbox{4}\arrow[-,r,"*"]\arrow[-,rr,"1",bend left= 30]
&\fbox{5}\arrow[-,r,"*"]\arrow[-,rr,"1",bend left= 30]&\fbox{6}\arrow[-,r,"*"]\arrow[-,rr,"1",bend left= 30]&\fbox{7}\arrow[-,r,"*"]\arrow[-,rrd,"*"]&\fbox{8}\arrow[-,r,"1"]&\fbox{9}\\
&\fbox{3} \arrow[-,ru,"1"]&&&&&&\fbox{10}\\

\end{tikzcd}\\
\caption{Illustrating the calculation in Example $1$ of 5.4.11}\label{fig12}
\end{center}
\end{figure}

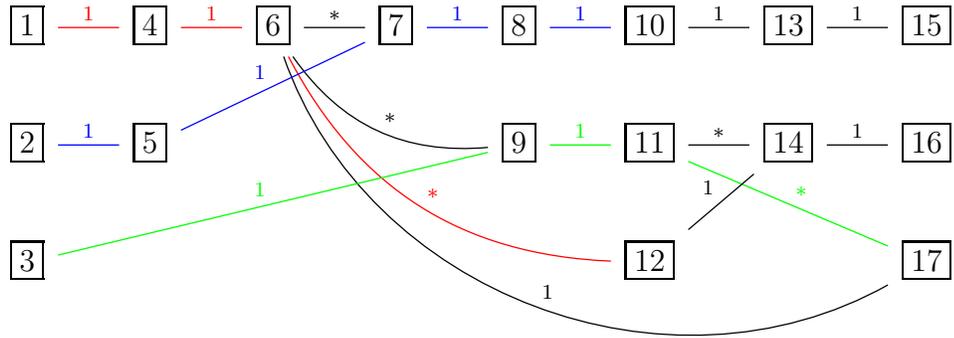
\begin{figure}[H]
\begin{center}
\begin{tikzcd}[row sep=1.8 em,%tiny,
column sep = 2 em]
\fbox{1}\arrow[-,r,"1",color=red]&\fbox{4}\arrow[-,r,"1",color=red]&\fbox{6}\arrow[-,r,"*"]\arrow[-,rrd,"*",bend right= 30]\arrow[-,rrrdd,"*",bend right= 30, color=red]\arrow[-,rrrrrdd,"1",bend right= 50]&\fbox{7}\arrow[-,r,"1",color=blue]
&\fbox{8}\arrow[-,r,"1",color=blue]&\fbox{10}\arrow[-,r,"1"]&\fbox{13}\arrow[-,r,"1"]&\fbox{15}\\
\fbox{2}\arrow[-,r,"1",color=blue]&\fbox{5}\arrow[-,rru,"1",color=blue]&&&\fbox{9}\arrow[-,r,"1",color=green]&\fbox{11}\arrow[-,r,"*"]\arrow[-,rrd,"*",color=green]&\fbox{14}\arrow[-,r,"1"]&\fbox{16}\\
\fbox{3}\arrow[-,rrrru,"1",color=green]&&&&&\fbox{12}\arrow[-,ru,"1"]&&\fbox{17}\\
\end{tikzcd}\\
\caption{The composite lines of the disjoint union between the first pair of neighboring column of height $3$ are in red, blue and green. }
\end{center}
\end{figure}
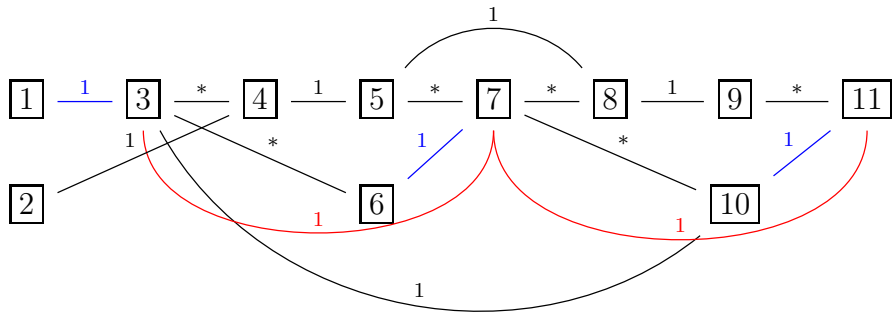
\begin{figure}[H]
\begin{center}
\begin{tikzcd}[row sep=1.4 em,%tiny,
column sep = 1.8 em]
\fbox{1}\arrow[-,r,"1",color=blue]&\fbox{3}\arrow[-,r,"*"]\arrow[-,rrd,"*"]\arrow[-,rrrrrd,"1",bend right= 50]\arrow[-,rrr,"1",bend right= 90, color=red ]&\fbox{4}\arrow[-,r,"1"]&\fbox{5}\arrow[-,r,"*"]\arrow[-,rr,"1",bend left= 50]&\fbox{7}\arrow[-,r,"*"]\arrow[-,rrd,"*"]\arrow[-,rrr,"1",bend right= 90, color=red ]&\fbox{8}\arrow[-,r,"1"]&\fbox{9}\arrow[-,r,"*"]&\fbox{11}\\
\fbox{2}\arrow[-,rru,"1"]&&&\fbox{6}\arrow[-,ru,"1",color=blue]&&&\fbox{10}\arrow[-,ru,"1",color=blue]& \\

\end{tikzcd}\\

\caption{Illustrating the example of 6.10.7.  The bad  VS-pairs  $(x_{1,3},x_{6,7}), (x_{6,7},x_{10,11})$ are given by the  blue lines  and VS elements
$x_{3,7}$ and $x_{7,11}$ by the red lines.}
\end{center}
\end{figure}

%The matrix of $e_{VS}$
%
\begin{figure}[H]
\begin{center}
 \begin{tikzpicture}
 \matrix [matrix of math nodes,left delimiter=(,right delimiter=)] (m)
{
1 & 0 & \blue{1} & 0 & 0 & 0 & 0 & 0 & 0 & 0 & 0 \\
0 & 1 & 0 & 1 & 0 & 0 & 0 & 0 & 0 & 0 & 0 \\
0 & 0 & 1 & 0 & 0 & 0 & \red{1} & 0 & 0 & 1 & 0 \\
0 & 0 & 0 & 1 & 1 & 0 & 0 & 0 & 0 & 0 & 0 \\
0 & 0 & 0 & 0 & 1 & 0 & * & 1 & 0 & 0 & 0 \\
0 & 0 & 0 & 0 & 0 & 1 & \blue{1} & 0 & 0 & 0 & 0 \\
0 & 0 & 0 & 0 & 0 & 0 & 1 & * & 0 & * & \red{1} \\
0 & 0 & 0 & 0 & 0 & 0 & 0 & 1 & 1 & 0 & 0 \\
0 & 0 & 0 & 0 & 0 & 0 & 0 & 0 & 1 & 0 & * \\
0 & 0 & 0 & 0 & 0 & 0 & 0 & 0 & 0 & 1 & \blue{1} \\
0 & 0 & 0 & 0 & 0 & 0 & 0 & 0 & 0 & 0 & 1 \\
};
\draw[
fill=blue, opacity=0.5] (m-2-1.south west) rectangle (m-1-2.north east);
\draw[
fill=red, opacity=0.5] (m-3-3.south west) rectangle (m-3-3.north east);
\draw[
fill=red, opacity=0.5] (m-4-4.south west) rectangle (m-4-4.north east);
\draw[
fill=blue, opacity=0.5] (m-6-5.south west) rectangle (m-5-6.north east);
\draw[
fill=red, opacity=0.5] (m-7-7.south west) rectangle (m-7-7.north east);
\draw[
fill=red, opacity=0.5] (m-8-8.south west) rectangle (m-8-8.north east);
\draw[
fill=blue, opacity=0.5] (m-10-9.south west) rectangle (m-9-10.north east);
\draw[
fill=red, opacity=0.5] (m-11-11.south west) rectangle (m-11-11.north east);
 \end{tikzpicture}\\

\caption{The example of 6.10.7 in matrix form. Here $e_{VS}$ is represented by the $1$'s with the additional VS elements in red. The $\ast$'s represent $V$.}
\end{center}
\end{figure}

\newpage
Illustrating the example of Section 6.10.3.

An unexpected bad quadruplet in the parabolic defined by the composition $(2,1,2,1,2)$:
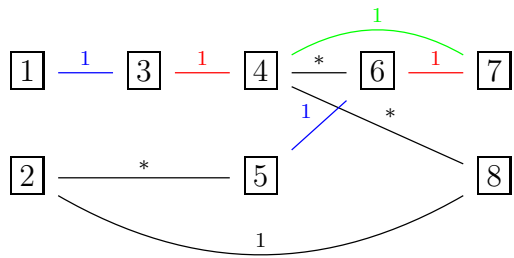
\begin{figure}[H]
\begin{center}
\begin{tikzcd}[row sep=1.4 em,%tiny,
column sep = 1.8 em]
\fbox{1}\arrow[-,r,"1",color=blue]&\fbox{3}\arrow[-,r,"1",color=red ]&\fbox{4}\arrow[-,r,"*"]\arrow[-,rrd,"*",bend left= 0]\arrow[-,rr,"1",bend left= 30,color=green]&\fbox{6}\arrow[-,r,"1",color=red]&\fbox{7}\\
\fbox{2}\arrow[-,rr,"*"]\arrow[-,rrrr,"1",bend right= 30]&&\fbox{5}\arrow[-,ru,"1",color=blue]&&\fbox{8} \\

\end{tikzcd}\\

\caption{The red lines mark a seemingly good VS quadruplet  $(3,4,6,7)$. Yet $x_{4,6}$ is also needed to obtain $x_{3.6}$ from $x_{3,4}$ in the  VS-pair  $(x_{1,3},x_{5,6})$ given by the  blue lines. Thus one needs to adjoin the VS element $x_{4,7}$ to obtain $e_{VS}$, marked in green. }
\end{center}
\end{figure}
The matrix of $e_{VS}$
\begin{figure}[H]
\begin{center}
 \begin{tikzpicture}
 \matrix [matrix of math nodes,left delimiter=(,right delimiter=)] (m)
{
1 & 0 & \blue{1} & 0 &{\encircled{0}} & 0 & 0 & 0 \\
0 & 1 & 0 & 0 &{\encircled{*}}& 0 & 0 & 1 \\
0 & 0 & 1 & \red{1} & {\encircled{0}} & 0& 0 & 0  \\
0 & 0 & 0 & 1 & 0 & {\encircled{0}} & \green{1} & {\encircled{*}} \\
0 & 0 & 0 & 0 & 1 & \blue{1} & 0& {\encircled{0}}  \\
0 & 0 & 0 & 0 & 0 & 1 & \red{1}& {\encircled{0}} \\
0 & 0 & 0 & 0 & 0 & 0 & 1 &  0\\
0 & 0 & 0 & 0 & 0 & 0 & 0 & 1 \\
};
\draw[
fill=blue, opacity=0.5] (m-2-1.south west) rectangle (m-1-2.north east);
\draw[
fill=red, opacity=0.5] (m-3-3.south west) rectangle (m-3-3.north east);
\draw[
fill=blue, opacity=0.5] (m-5-4.south west) rectangle (m-4-5.north east);
\draw[
fill=red, opacity=0.5] (m-6-6.south west) rectangle (m-6-6.north east);
\draw[
fill=blue, opacity=0.5] (m-8-7.south west) rectangle (m-7-8.north east);

\end{tikzpicture}\\

\caption{The example of 6.10.3 in matrix form. Here $e_{VS}$ is represented by the $1$'s with the additional VS element in green. The $\ast$'s represent $V$. The excluded root vectors (see (2.6)) are encircled. }

%\caption{Representation of regular $e_{V S}$, the ones in red, blue represent the VS-pair and the one green the add one and $V$.}
\end{center}
\end{figure}

\end{document}